\numberwithin{equation}{section}
\newtheorem{thm}{Theorem}[section]
\newtheorem{lemma}[thm]{Lemma}
\newtheorem{coro}[thm]{Corollary}
\newtheorem{rmk}[thm]{Remark}
\newcommand{\R}{\mathbb{R}}
\newcommand{\N}{\mathbb{N}}
\newcommand{\cP}{\mathcal{P}}
\newcommand{\cT}{\mathcal{T}}
\newcommand{\gve}{\varepsilon}
\newcommand{\nempty}{\neq\emptyset}
\newcommand{\wt}{\widetilde}
\newcommand{\wh}{\widehat}
\newcommand{\dx}{\textrm{dx}}
\newcommand{\dt}{\textrm{dt}}
\newcommand{\h}{\vec{h}}
\def\XXint#1#2#3{{\setbox0=\hbox{$#1{#2#3}{\int}$}
\vcenter{\hbox{$#2#3$}}\kern-.5\wd0}}
\begin{document}
\author[Ignacio Ojea]{Ignacio Ojea}

 \title{Anisotropic finite elements for elliptic problems with singular data}
 \keywords{Anisotropy, Finite Element Method, Dirac delta, Weighted Sobolev spaces}
 \address{Departamento de Matem\'atica Facultad de Ciencias Exactas y Naturales, Universidad de Buenos Aires, - Inst. de Investigaciones Matemáticas ``Luis A. Santal\'o'', CONICET-UBA. Partially supported by a fellowship of CONICET.} 
 \email{iojea@dm.uba.ar}  

\begin{abstract}
We study the problem $-\Delta u = \gamma$, where $\gamma$ is a singular measure, with support on a curve or a point. We prove that optimal rates of convergence for the finite element method can be obtained using properly graded meshes. In particular, we consider isotropic graded meshes when $\gamma$ is a point Dirac delta, and anisotropic graded meshes when $\gamma$ is a measure supported on a segment. Numerical experiments are shown that verify our results, and lead to interesting observations.
\end{abstract}

\maketitle

\section{Introduction}
In this paper we study the Poisson equation with data given by a singular measure. In the simplest case, such data will be a Dirac delta distribution supported on a point. More generally, we are interested in data given by a finite measure with support on a curve. Our model problem is:
\begin{equation}\label{problem}
\left\{\begin{array}{cl} -\Delta u(x) = \gamma(x) & x\in\Omega\subset\mathbb{R}^n\\
u(x) = 0 & x\in\partial\Omega,
\end{array}\right.
\end{equation}
where $n=2,3$, $\Omega$ is a smooth convex domain, and $\gamma$ is a finite measure with support on a curve $\Gamma\subset\Omega$. Particularly, if $\phi:[-L,L]\to\R^n$ is an arc-length parametrization of $\Gamma$, there is a function $\hat{\gamma}:[-L,L]\to\mathbb{R}$, so that the measure $\gamma$ is given by:
$$\int_{\Omega} f(x)\gamma(x) \dx = \int_{-L}^{L} f(\phi(t))\hat{\gamma}(t) \dt,$$
for every $f\in C^{\infty}_0(\Omega)$. We assume $\hat{\gamma}\in C^{1}([-L,L])$. We are particularly interested in the simple case where $\Gamma$ is a straight line, where the anisotriopic behaviour of the solution of \eqref{problem} is easier to understand.

This kind of problem arises in many contexts. The case of point singularities can model, for example, point sources in electromagnetic or convection-diffusion problems. It has been largely studied. We can mention, for example, \cite{B_delta} and \cite{S_delta} where a priori estimates are given for the norm of the error measured in $L^2$ and in fractional Sobolev spaces $H^s$ for some $0<s<1$.
In \cite{ABSV_delta} an a priori analysis is carried out on weighted Sobolev spaces, and the $L^2$ norm of the error is bounded. A posteriori estimates are given in \cite{ABR_delta} in $L^p$, with $1<p<\infty$ and in $W^{1,p}$ for $1<p<2$. However, the arguments presented there hold only for $n=2$. In \cite{KW_delta} quasi-uniform meshes are used, obtaining quasi-optimal rates of convergence for finite element methods of order one, and optimal rates of convergence for higher order methods. However, the authors only consider local error norms: the error is measured on a domain excluding the singularity. Similar results are obtained in \cite{KVW_deltaline} where local error estimates are proved for the Poisson equation with a source given by a Dirac measure supported on a curve.
Finally \cite{AGM_delta} proposes a posteriori error estimates for weighted Sobolev spaces, where the possible weights are given by powers of the distance to the singular point, both for $n=2$ and $n=3$. On the other hand, singularities supported on a curve are used in \cite{DQ_1d3d,D_1d3d} to model two coupled diffusion-reaction problems, one on the curve, and one in the domain.
The goal of those papers is to study the flow of blood through tissues: the domain represents a mass of tissue whereas large blood vessels are described by curves. However, the same setting can be used to model fluid flow in three dimensional porous media with fractures represented by one-dimensional subsets. In \cite{D_1d3d} a priori graded meshes are used to solve the problem using the finite element method, and estimates are found for a weighted norm of the error. In fact, the weighted analysis of the point-singular problem given in \cite{AGM_delta} follows closely the arguments introduced in \cite{DQ_1d3d} and \cite{D_1d3d}. Other applications of problem \eqref{problem} can be seen in \cite{ZZ_DeltaCurve} and the references therein.

As it is pointed out in \cite{AGM_delta}, weighted Sobolev spaces as the ones used in \cite{DQ_1d3d,D_1d3d} (but also in \cite{ABSV_delta}) seem to be more appropriate than $W^{1,p}$ spaces (used in \cite{ABR_delta}) or $H^s$ spaces (considered in \cite{B_delta,S_delta}), since the norm of the weighted spaces is only weakened near the singularity, and not in the whole domain $\Omega$.

Here we are interested in problem \eqref{problem} as a model for heat diffusion produced by the heating of gold nanoparticles through laser beams (see, for example \cite{Petal_nano}). Spherical nanoparticles can be represented as point-sources, whereas ``elliptic'' nanoparticles or arrays of nanoparticles can be represented by one-dimensional singularities. In this context, it is of particular interest the case in which $\Gamma$ is a segment.

In the first sections of the paper we state the general setting for the problem in weighted spaces of Kondratiev type and recall regularity results proved in \cite{O_regularity}. In Section \ref{S:discreteproblem} we give a general setting for the discrete problem and prove a weighted version of Aubin-Nitsche's Lemma. Section \ref{S:pointfem} is devoted to a priori estimates for data given by a point Dirac delta where the main ideas are easier to understand. Section  \ref{S:segmentfem} treats the case of measure supported on a segment, giving a priori estimates for both isotropic and anisotropic graded meshes. We focus on the case $n=3$. The case $n=2$ is commented later. Finally, Section \ref{S:experiments} show numerical experiments and its results.

\section{Notation and Preliminaries}\label{S:notation}

We consider a smooth convex domain $\Omega\subset\mathbb{R}^n$, $n=2,3$. $\Gamma\subset\Omega$ is a curve such that $\Gamma\cap\partial\Omega=\emptyset$.
 We denote $m$ the dimension of $\Gamma$, being $m=1$ for a curve, and $m=0$ for a point.

 Let $r(x)=d(x,\Gamma)$. We define a neighborhood of $\Gamma$ by:
 $$B(\Gamma,\gve) = \{x\in\R^n:\; r(x)<\gve\}.$$

 For $m=1$, we focus on the anisotropic behaviour of the solution. For this, we consider the case in which $\Gamma$ is a straight line given by:
 \begin{equation}\label{segment}
 \Gamma = \{(0',x_n)\in\R^{n-1}\times\R:\; -\dfrac{L}{2}\le x_n\le \dfrac{L}{2}\}.
 \end{equation}
 where $0'$ detones the null vector in $\R^{n-1}$.
 Since $\Gamma\cap\partial\Omega=\emptyset$, there is some $\rho_0>0$ such that $B(\Sigma,\rho_0)\subset\Omega$. For convinience, we assume $\rho_0=1$.

 It is sometimes useful to decompose the neighbourhood $B(\Gamma,\rho)$ of $\Gamma$ in its cylindrical part:
 $$C(\Gamma,\rho) = \{(x',x)\in\R^n:\; -L<x_n<L,\, \|x'\|<\rho\},$$
 and the extreme semi-balls:
 $$B^{out}(\pm L,\rho) = \{x\in\R^n:\; \|x-(0',\pm L)\|<\rho\}.$$

 We write $B^{out}(\Gamma,\rho) = B^{out}(-L,\rho)\cup B^{out}(L,\rho)$.

Finally, it will be necessary to consider also the distance to the extreme points of $\Gamma$, $\{(0',-L),(0',L)\}$, that we denote $r_e(x)$.

 We use the standard notation for derivatives: $\alpha=(\alpha_1,\dots,\alpha_n)\in\N_0^n$ is a multiindex and $D^\alpha u$ stands for $\partial^{\alpha_1}_{x_1}\dots\partial^{\alpha_n}_{x_n}u$. $|\alpha|=\alpha_1+\dots+\alpha_n$. Sometimes we take $\alpha'$ such that $\alpha=(\alpha',\alpha_n)$.
 Moreover, if $\vec{h}\in\R^n$, $\vec{h}^\alpha$ stands for $h_1^{\alpha_1}\dots h_n^{\alpha_n}$.

 We denote with $C$ a generic constant that may change from line to line. We also write $a\lesssim b$ whenever $a\le C b$ for some constant $C$ independent of $a$ and $b$. We say $a\sim b$ when $b\lesssim a\lesssim b$.






\section{Weak formulation and regularity}\label{S:weakformulation}

\subsection{Weak Formulation}
The weak formulation for \eqref{problem} consists in finding $u\in V$ such that:
\begin{equation}\label{weakproblem}
\begin{array}{cl}
\int_\Omega \nabla u\nabla v = \int_\Omega \gamma v & \forall v\in V',
\end{array}
\end{equation}
where the spaces $V$ and $V'$ that give sense to the so far formal statement \eqref{weakproblem} are to be defined.
Since the source term $\gamma$ does not belong to the dual space of $H^1_0(\Omega)$ (except when $n=2$, $m=1$), it is not possible to use the usual test and ansatz space $V'=V=H^1_0$. We consider weighted Sobolev spaces.

Let $L^2_\sigma(\Omega)$ be the space of measurable functions $v$ with: $\|v\|_{L^2_\sigma(\Omega)}:=\|vr^\sigma\|_{L^2(\Omega)}<\infty$. We also define $H^1_\sigma(\Omega)$ the space of functions $v\in L^2_{\sigma}(\Omega)$ with derivatives in $L^2_\sigma(\Omega)$.
The dual space for $L^2_{\sigma}$ is $L^2_{-\sigma}$, with respect to the duality product:
$\langle u, v\rangle = \int_{\Omega} u(x) v(x) \dx$.
Observe that, for a bounded domain $\Omega$, we have that $\|v\|_{L^2_{\sigma+\varepsilon}(\Omega)}\le \|v\|_{L^2_{\sigma}(\Omega)}$,  $\forall\varepsilon>0$, and the continuous inclusion $L^2_{\sigma}\subset L^2_{\sigma+\varepsilon}$.

Many results on Sobolev spaces $W^{k,p}$ can be extended to weighted Sobolev spaces $W^{k,p}_{\omega}$ when the weight $\omega$ belongs to the Muckenhoupt class $A_p$  (see \cite{M_Ap}). The following result gives a characterization of the weights of the form $r(x)^\sigma$ that belong to the class $A_2$ (see \cite[Lemma 3.3]{DLG_Stokes}):

\begin{lemma}\label{L:DLG_Ap}
Let $F\subset\R^n$ be a compact set of dimension $m$, and let $r(x)$ be the distance from $x$ to $F$. If
\begin{equation}\label{A2cond}
  -\frac{n-m}{2}<\sigma<\frac{n-m}{2},
\end{equation}
then $r(x)^{2\sigma}$ belongs to the class $A_2$.
\end{lemma}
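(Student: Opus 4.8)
The plan is to verify the Muckenhoupt $A_2$ condition directly from its definition, reducing the $n$-dimensional estimate to a local computation near the set $F$ by exploiting the geometry of the distance function $r(x)=d(x,F)$.
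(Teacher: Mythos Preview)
The paper does not prove this lemma; it quotes it verbatim from \cite[Lemma~3.3]{DLG_Stokes} and refers the reader there, so there is no argument in the present paper to compare against.  Your plan---checking the $A_2$ condition
\[
\sup_{Q}\Big(\frac{1}{|Q|}\int_Q r^{2\sigma}\Big)\Big(\frac{1}{|Q|}\int_Q r^{-2\sigma}\Big)<\infty
\]
by splitting into cubes far from $F$ (where $r$ is essentially constant) and cubes near $F$---is indeed the standard route and is the one carried out in the cited reference.

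That said, what you have written is only a plan, and the one nontrivial ingredient is not yet visible.  For a cube $Q$ of side~$\ell$ meeting $F$, the estimate hinges on the fact that an $m$-dimensional set has tubular neighbourhoods satisfying
\[
|\{x\in Q:\ r(x)<t\}|\lesssim \ell^{m}\,t^{\,n-m},\qquad 0<t\le \ell,
\]
so that a dyadic decomposition gives $\int_Q r^{\pm 2\sigma}\lesssim \ell^{n\pm 2\sigma}$ precisely when $\pm 2\sigma>-(n-m)$, i.e.\ under \eqref{A2cond}.  Without making this dimension-dependent measure estimate explicit, the phrase ``local computation near $F$'' does not yet distinguish the admissible range $|\sigma|<\tfrac{n-m}{2}$ from any other; you should spell this step out.
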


A consequence of this lemma is that, for $\sigma$ satisfying \eqref{A2cond} and $v\in H^1_\sigma(\Omega)$ the weighted Poincar\'e inequality
$$\|v\|_{L^2_{\sigma}(\Omega)} \le C_P \|\nabla v\|_{L^2_{\sigma}(\Omega)}$$
holds whenever $v$ has mean value zero on $\Omega$, or support in $\Omega$. This allows D'Angelo \cite{D_1d3d} to consider the space $W_\sigma = \{v\in H^1_{\sigma}: \, v|_{\partial\Omega}= 0\}$, with the norm:
$$\|v\|_{W_{\sigma}} = \|\nabla v\|_{L^2_{\sigma}},$$
which is equivalent to the $H^1_{\sigma}$ norm.

The goal is to complete the definition of our weak problem \eqref{weakproblem} taking $V=W_\sigma$ and $V'=W_{-\sigma}$.
Hence, we consider the more general problem that reads as: \emph{Given $f\in W_{-\sigma}'$, find $u\in W_{\sigma}$ such that: }
\begin{equation}\label{modelproblem}
  \langle \nabla u,\nabla v\rangle = \langle f,v\rangle, \quad \forall v\in W_{-\sigma}.
\end{equation}
The existence and uniqueness of solution of the problem thus set is proved in \cite{D_1d3d} for the case $m=1$, $n=3$, and in \cite{AGM_delta} for $m=0$ and $n=2,3$, provided that $\sigma$ satisfies \eqref{A2cond}.
It is easy to prove the same result stands when $n=2$, $m=1$.

The only remaining issue is to prove that our right hand side $\gamma$ belongs to $W_{-\sigma}'$. This has also been proven, in \cite{DQ_1d3d} ($n=3$, $m=1$) and in   \cite{AGM_delta} ($m=0$), and holds true whenever
\begin{equation}\label{singularwelldefcond}
  \frac{n-m}{2}-1<\sigma<\frac{n-m}{2}.
\end{equation}

In conclusion, we have that \eqref{weakproblem} has a unique solution $u\in W_{\sigma}$ for $\sigma$ satisfying \eqref{singularwelldefcond}. And thanks to the Poincar\'e inequality mentioned above, $u$ belongs to $H^1_{\sigma}$.

\subsection{Regularity in Kondratiev type spaces}

However in order to obtain a priori error estimates, we need information about the regularity of $u$ up to its second order derivatives.
Such regularity can be more accurately expressed in terms of Kondratiev spaces, rather than standard Sobolev spaces. Hence, we introduce the Kondratiev type space $K^\ell_\eta(\Omega)$, formed by all functions $v$ having weak derivatives of order $\alpha$, for $0\le|\alpha|\le \ell$, equipped with the norm:
$$\|u\|_{K^\ell_{\eta}}^2 = \sum_{|\alpha|\le \ell} \int |D^{\alpha}u(x)|^2r(x)^{2(\eta+|\alpha|)} dx $$

The following result, proved in \cite{O_regularity} gives a characterization of the solution $u$ of problem \eqref{problem}:

\begin{thm}\label{T:norms}
Let $u$ be the solution of problem \eqref{problem}, then, $u\in K^2_{\sigma-1}(\Omega)$, for every  $\sigma>\frac{n-m}{2}-1$; except for the particular case $n=2$, $m=1$, where $u\in L^2_{\sigma}(\Omega)$ and $\nabla u\in K^1_{\sigma}(\Omega)$, provided that $\sigma>-\frac{1}{2}$.
\end{thm}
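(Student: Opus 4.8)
The plan is to reduce everything to a scaling analysis on a Whitney-type decomposition of a neighbourhood of $\Gamma$, where the datum $\gamma$ is absent and $u$ is harmonic. First I would split $\Omega = (\Omega\setminus B(\Gamma,\rho_0/2))\cup B(\Gamma,\rho_0)$. On the outer region $r\sim 1$, so the Kondratiev weights are comparable to constants and $\|u\|_{K^2_{\sigma-1}}\lesssim\|u\|_{H^2}$; since there $u$ is harmonic with zero trace on the smooth boundary $\partial\Omega$ (which is disjoint from $\Gamma$), standard elliptic boundary regularity gives $u\in H^2$, so this region contributes a finite amount. It then remains to bound the weighted norm on $B(\Gamma,\rho_0)$.

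Second, I would decompose $B(\Gamma,\rho_0)\setminus\Gamma$ into a bounded-overlap family of cubes $Q$ with side $\ell_Q\sim\mathrm{dist}(Q,\Gamma)$, so that $r$ is comparable to $\ell_Q$ on $Q$. On each $Q$ we have $\Delta u=0$, since $\mathrm{supp}\,\gamma\subset\Gamma$, and after rescaling $Q$ to a unit cube the function $\hat u(\hat x)=u(\ell_Q\hat x+x_Q)$ is harmonic on a fixed configuration $\hat Q\Subset\hat Q^*$. Interior elliptic regularity yields $\|\hat u\|_{H^2(\hat Q)}\lesssim\|\nabla\hat u\|_{L^2(\hat Q^*)}$ for the homogeneous seminorms. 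The key point is that, because the Kondratiev weight carries one extra power of $r$ per derivative, the dilation factors cancel, leaving the clean per-cube estimate $\sum_{1\le|\alpha|\le 2}\int_Q|D^\alpha u|^2 r^{2(\sigma-1+|\alpha|)}\lesssim\int_{Q^*}|\nabla u|^2 r^{2\sigma}$. Summing over the family gives control by $\|\nabla u\|_{L^2_\sigma(B(\Gamma,\rho_0))}^2=\|u\|_{W_\sigma}^2$, which is finite for $\sigma$ in the range \eqref{singularwelldefcond} by the regularity recalled above. This disposes of the first- and second-order terms of the $K^2_{\sigma-1}$ norm.

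Third, the zeroth-order term $\int|u|^2 r^{2(\sigma-1)}$, that is $u\in L^2_{\sigma-1}$, requires gaining one power of $r$ over $\|\nabla u\|_{L^2_\sigma}$, so the plain Poincar\'e inequality does not suffice. Here I would invoke a weighted Hardy inequality $\|u\|_{L^2_{\sigma-1}(B(\Gamma,\rho_0))}\lesssim\|\nabla u\|_{L^2_\sigma(\Omega)}+\|u\|_{L^2(\Omega\setminus B(\Gamma,\rho_0/2))}$ for functions that vanish away from $\Gamma$. Passing to transverse/axial coordinates and reducing to a one-dimensional radial Hardy inequality in the $d=n-m$ transverse variables, the inequality holds precisely when the radial exponent avoids its critical value, which translates exactly into $\sigma>\frac{n-m}{2}-1$. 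This is where the lower bound on $\sigma$ in the theorem enters, and the fundamental-solution profile ($u\sim r^{-(n-m-2)}$ when $n-m\ge3$ and $u\sim\log r$ when $n-m=2$) shows that the threshold is sharp.

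Finally, two points need separate care, and they are where I expect the real work to lie. For $m=1$ one must treat the endpoints $(0',\pm L)$, where the edge singularity degenerates into a vertex-type one; there I would replace the cylindrical Whitney cubes by dyadic spherical shells centred at the endpoints and measured with $r_e$, and verify that the summed local estimates match across the transition region $r\sim r_e$ while still reproducing the sharp threshold. Away from the endpoints the scaling is clean, but at the corners the geometry is genuinely different, and gluing the two regimes is the main obstacle. For the exceptional case $n=2$, $m=1$ (transverse codimension one) the transverse fundamental solution is only Lipschitz, so $D^2u$ carries a Dirac mass on $\Gamma$ and $u\notin K^2$; here I would apply the same Whitney argument to $\nabla u$, whose elementwise derivatives on $\Omega\setminus\Gamma$ blow up like $r^{-1}$, to obtain $\nabla u\in K^1_\sigma$, while $u\in L^2_\sigma$ follows from the recalled $W_\sigma$ regularity, now with the threshold $\sigma>-\tfrac12=\frac{n-m}{2}-1$.
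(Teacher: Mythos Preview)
The paper does not give an in-text proof of this theorem: it remarks that for $m=0$ the result ``follows directly from the study of the fundamental solution for the Laplacian'' and, for $m=1$, refers the reader to \cite{O_regularity}, noting that ``a much more complicated analysis is needed''. Your Whitney-plus-scaling strategy is therefore a different, more unified route than the paper's $m=0$ sketch: instead of computing explicitly with $r^{2-n}$ or $\log r$, you exploit harmonicity on dyadic blocks away from $\Gamma$ together with the homogeneity of the Kondratiev weights, which is the standard mechanism behind Kondratiev regularity. For $m=0$ this is arguably cleaner than the direct computation; for $m=1$ it is essentially the skeleton of what \cite{O_regularity} must carry out.

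Two remarks on the details. First, your identification of the Hardy threshold is not quite right: the radial Hardy inequality for functions vanishing at the outer boundary yields $\|u\|_{L^2_{\sigma-1}}\lesssim\|\nabla u\|_{L^2_\sigma}$ for $\sigma>1-\frac{n-m}{2}$, not $\sigma>\frac{n-m}{2}-1$ (these coincide only when $n-m=2$; compare the range in the paper's own Theorem~\ref{T:impPoincare}, which you could simply quote). The sharp lower bound $\sigma>\frac{n-m}{2}-1$ in the statement comes not from Hardy but from the finiteness of $\|\nabla u\|_{L^2_\sigma}$ --- equivalently, from the singular profile of the fundamental solution --- which is the \emph{input} to your Whitney argument rather than a constraint produced by it. Second, you correctly flag the endpoints of $\Gamma$ as the crux when $m=1$: there the edge-type scaling degenerates to a vertex-type one, and matching the cylindrical Whitney blocks with endpoint shells while keeping the constants uniform is precisely what the paper warns is ``much more complicated''. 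Your sketch is honest about this gap, but be aware it is not a bookkeeping detail --- it is where most of the work lies.
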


For $m=0$ (point Dirac delta), this result follows directly from the study of the fundamental solution for the Laplacian. For $m=1$, a much more complicated analysis is needed. We refer the reader to \cite{O_regularity} for a complete proof.
We want to remark, however, that in the case $m=1$, $n=3$ this result is consistent with the regularity of $u$ assumed in \cite{D_1d3d}.

For the case in which $\Gamma$ is the segment \eqref{segment}, Theorem \ref{T:norms} can be refined, detailing the anisotropic behaviour of the derivatives of $u$. Such a refinement is given in the following theorem, also proved in \cite{O_regularity}. We recall that $r_e(x)$ stands for the distance from $x$ to the extreme points of $\Gamma$.

\begin{thm}\label{T:anisotropic}
Let $\Gamma$ be the segment \eqref{segment}, and $u$ the solution of \eqref{problem}. Consider a compact set $S\subset C(\Gamma,1)$, and take $r_{S,e} = min_{x\in S} r_e(x)$. If $r_{S,e}> 0$, then there is a constant $C$ depending on the measure $|S|$ of $S$ such that:
\begin{equation}\label{anisotropicDu}
r_{S,e}^{\frac{1}{2}} \|D^\alpha \partial_{x_n} u\|_{L^2_{\eta}(S)}\le C   \quad \textrm{ for every } \eta>\frac{n-1}{2}-2+|\alpha|
\end{equation}
for $|\alpha|=0,1$ in the case $n=3$ and for $|\alpha|=1$ in the case $n=2$. For the second derivative with respect to $x_n$, in $\R^3$, we have the better estimate:
\begin{equation}\label{anisotropicDupar}
r_{S,e}^{\frac{3}{2}}\|\partial^2_{x_n} u\|_{L^2_{\eta}(S)} \le C \quad \textrm{ for every } \eta>-\frac{n-1}{2}
\end{equation}
\end{thm}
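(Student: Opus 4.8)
The plan is to differentiate the equation along the segment and exploit the linearity of \eqref{problem}. Since $-\Delta u = \gamma$ and $\gamma$ is the line measure $\langle\gamma,f\rangle = \int_{-L/2}^{L/2}\hat\gamma(t)f(0',t)\,dt$, computing $\partial_{x_n}\gamma$ as a distribution (integrating by parts in the arc-length parameter $t$, where the two endpoints of $\Gamma$ produce boundary terms) gives
\begin{equation*}
-\Delta(\partial_{x_n}u) = \partial_{x_n}\gamma = \hat\gamma'\,\delta_\Gamma - \hat\gamma(\tfrac{L}{2})\,\delta_{(0',L/2)} + \hat\gamma(-\tfrac{L}{2})\,\delta_{(0',-L/2)}.
\end{equation*}
Thus $\partial_{x_n}u$ itself solves a Poisson problem whose right-hand side is a line source with the \emph{continuous} density $\hat\gamma'$ plus two point Dirac masses located exactly at the extreme points of $\Gamma$. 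This identity is the structural heart of the anisotropy: differentiating along $\Gamma$ reproduces a source of the same type (a line measure with $C^0$ density) and concentrates the loss of regularity at the two endpoints, which is precisely where the factor $r_e$ — and on $S$ the number $r_{S,e}$ — enters.

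First I would split $\partial_{x_n}u = u_1 + u_e^+ + u_e^-$ by linearity, where $u_1$ solves Poisson with the line source $\hat\gamma'\,\delta_\Gamma$ and $u_e^\pm$ solve Poisson with the endpoint masses (each realized as the Newtonian potential of its source plus a harmonic correction enforcing the boundary condition; since $\Gamma\subset\subset\Omega$ and $S\subset C(\Gamma,1)$ stays away from $\partial\Omega$, the corrections are smooth on $S$ and contribute only bounded terms). For the bulk part $u_1$ I would invoke Theorem \ref{T:norms} in the case $m=1$: because $\hat\gamma'\in C^0([-L,L])$ is an admissible density, $u_1\in K^2_{\sigma-1}(\Omega)$, whence $\|D^\alpha u_1\|_{L^2_\eta(S)}\le C$ for every $\eta>\frac{n-1}{2}-2+|\alpha|$ with $|\alpha|=0,1$, which is exactly the threshold in \eqref{anisotropicDu}. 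For the endpoint parts $u_e^\pm$, which away from the extreme points are smooth and, in $\R^3$, behave like the explicit fundamental solution $\sim r_e^{-1}$ (so $D^\alpha u_e^\pm\sim r_e^{-1-|\alpha|}$), I would estimate the weighted norms by a dyadic scaling argument on the shells $r_e\sim\rho:=2^k r_{S,e}$. On such a shell $r\le r_e$ and $\int_{r_e\sim\rho}r^{2\eta}\,dx\sim\rho^{2\eta+3}$, so balancing against $r_e^{-2(1+|\alpha|)}$ and summing the resulting geometric series yields exactly $r_{S,e}^{1/2}\|D^\alpha u_e^\pm\|_{L^2_\eta(S)}\le C(|S|)$ throughout the admissible range of $\eta$. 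Combining the two contributions gives \eqref{anisotropicDu}.

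For the second-order estimate \eqref{anisotropicDupar} I would write $\partial^2_{x_n}u = \partial_{x_n}u_1 + \partial_{x_n}u_e^+ + \partial_{x_n}u_e^-$. The endpoint terms are handled as before: now $\partial_{x_n}u_e^\pm\sim r_e^{-2}$, and the same shell computation produces the stronger factor $r_{S,e}^{3/2}$ and the threshold $\eta>-\frac{n-1}{2}$ (which for $n=3$ coincides with the endpoint-integrability bound $\eta>-1$). The genuinely hard term is $\partial_{x_n}u_1$. One is tempted to differentiate the source of $u_1$ once more, but this fails: $\partial_{x_n}(\hat\gamma'\,\delta_\Gamma)$ involves $\hat\gamma''$, and we only assume $\hat\gamma\in C^1$, so the integration-by-parts trick that produced the gain at first order is unavailable at second order. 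Hence the anisotropic gain for $\partial_{x_n}u_1$ — that this $x_n$-derivative of a line potential with merely continuous density still lies in $L^2_\eta(S)$ down to $\eta>-\frac{n-1}{2}$, a full order below a generic transverse second derivative — must be proved directly, and this is where I expect the main obstacle to lie. The natural route is to exploit that away from the endpoints the geometry is invariant under translation in $x_n$: localize with a cutoff in $x_n$ supported in the interior of the cylinder (so the commutator terms are supported where $u_1$ is smooth), reduce to a transverse problem that is genuinely $x_n$-translation-invariant, and apply a partial Fourier (or Mellin) transform in $x_n$ so that $\partial_{x_n}$ becomes multiplication by $i\xi$; the gained weighted estimate then follows from transverse resolvent bounds. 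This is the delicate step carried out in \cite{O_regularity}, and it is where the hypothesis $r_{S,e}>0$ — keeping $S$ off the endpoints so that the cutoff localization is legitimate — is essential. The case $n=2$ is entirely analogous, using the two-dimensional fundamental solution $\sim\log r_e$ for the endpoint terms; the milder endpoint singularity is why only the first-order statement \eqref{anisotropicDu} with $|\alpha|=1$ is claimed there.
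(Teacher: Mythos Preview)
The paper does not actually prove this theorem: both Theorem~\ref{T:norms} and Theorem~\ref{T:anisotropic} are stated and immediately deferred to \cite{O_regularity}, with no argument given in the text. So there is no in-paper proof to compare your proposal against; what follows is an assessment of your sketch on its own merits.

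Your structural observation is correct and is almost certainly the backbone of the argument in \cite{O_regularity}: differentiating the equation in $x_n$ gives $-\Delta(\partial_{x_n}u)=\hat\gamma'\,\delta_\Gamma - \hat\gamma(L/2)\delta_{(0',L/2)} + \hat\gamma(-L/2)\delta_{(0',-L/2)}$, so one tangential derivative trades one order of density smoothness for two point masses at the endpoints, and the endpoint pieces are exactly what produce the $r_{S,e}$ prefactors. Your dyadic shell computation for $u_e^\pm$ is routine and gives the stated powers. There is, however, a gap in your treatment of the bulk piece $u_1$: you invoke Theorem~\ref{T:norms} for the line source with density $\hat\gamma'\in C^0$, but the paper's standing hypothesis for that result is $\hat\gamma\in C^1$. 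It is very plausible that $K^2_{\sigma-1}$ regularity persists for merely continuous (or bounded) densities, but this is not something the paper provides, so you would need to justify it separately rather than cite Theorem~\ref{T:norms} as a black box.

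For \eqref{anisotropicDupar} you correctly identify the genuine difficulty: a second differentiation would require $\hat\gamma''$, which is unavailable, so the extra gain for $\partial_{x_n}u_1$ must come from the cylindrical geometry directly. Your Fourier-in-$x_n$ outline is the right shape, though the phrase ``commutator terms are supported where $u_1$ is smooth'' is inaccurate --- $u_1$ is singular everywhere on $\Gamma$, not only at the endpoints; what you presumably mean is that on the support of the commutator the already-proven first-order bounds suffice. Since you explicitly defer this step to \cite{O_regularity}, your proposal is best read as a plausible roadmap that matches the paper's own level of detail (namely, a citation), with the density-regularity issue above as the one concrete point that needs patching.
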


The goal of this result is twofold: first, to show that the derivatives of $u$ along the direction parallel to $\Gamma$ are actually smoother than the ones along directions orthogonal to $\Gamma$. Second, to show that, though smoother, the derivatives along the direction parallel to $\Gamma$ lose regularity near the extreme points of the segment, and to make explicit how this loss of regularity depends on the distance $r_e$.
In Section \ref{S:segmentfem}, the compact sets $S$ will be the elements of the finite element mesh.

\subsection{An improved Poincar\'e inequatlity}

Dealing with Kondratiev spaces, we will use the following improved Poincar\'e inequality:

\begin{thm}\label{T:impPoincare}
  Let $\Omega\subset\R^n$, $\Gamma\subset\Omega$ a curve $(m=1)$ or point $(m=0)$ and $v\in W_{\sigma}$. If
  \begin{equation}\label{impPoincarecond}
    -\frac{n-m}{2}+1<\sigma<\frac{n-m}{2}.
    \end{equation}
  Then, there is a constant $C_P$ independent of $v$ such that:
  \begin{equation}\label{impPoincare}
    \|v\|_{L^2_{\sigma-1}(\Omega)}\le C_P \|\nabla v\|_{L^2_{\sigma}(\Omega)}.
  \end{equation}
\end{thm}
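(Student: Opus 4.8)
The plan is to read \eqref{impPoincare} as a Hardy-type inequality and to reduce it to a pointwise identity for the distance function $r$. The engine is the vector field $W=r^{2\sigma-1}\nabla r$. Since $|\nabla r|=1$ almost everywhere, and since for a point ($m=0$) one has $\Delta r=\frac{n-1}{r}$ while for the segment ($m=1$) one has $\Delta r=\frac{n-2}{r}$ in the cylindrical part $C(\Gamma,1)$ and $\Delta r=\frac{n-1}{r}$ inside the extreme balls $B^{out}(\pm L,1)$, a direct computation on the region where $r$ is smooth gives
\begin{equation*}
\operatorname{div}W=(2\sigma-1)r^{2\sigma-2}|\nabla r|^2+r^{2\sigma-1}\Delta r\ge (2\sigma+n-m-2)\,r^{2\sigma-2},
\end{equation*}
with equality whenever $\Delta r=\frac{n-m-1}{r}$. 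Hypothesis \eqref{impPoincarecond} is exactly what forces the scalar $c:=2\sigma+n-m-2$ to be positive, which both makes the inequality work and fixes the constant $C_P=2/c$.

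Next I would integrate this against $|v|^2$. Excising a tube $B(\Gamma,\delta)$ and writing $\Omega_\delta=\Omega\setminus\overline{B(\Gamma,\delta)}$, on which $W\in L^\infty$, integration by parts yields
\begin{equation*}
c\int_{\Omega_\delta}|v|^2 r^{2\sigma-2}\le\int_{\Omega_\delta}|v|^2\operatorname{div}W=\int_{\partial\Omega_\delta}|v|^2\,W\cdot\nu-\int_{\Omega_\delta}2v\,\nabla v\cdot W .
\end{equation*}
On $\partial\Omega$ the boundary integral vanishes because $v\in W_\sigma$ has zero trace there, while on $\partial B(\Gamma,\delta)$ the outward normal of $\Omega_\delta$ is $-\nabla r$, so $W\cdot\nu=-\delta^{2\sigma-1}\le0$ and that contribution may simply be discarded; this favorable sign is what lets us avoid any delicate estimate near $\Gamma$. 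Cauchy--Schwarz on the remaining term, together with $|\nabla r|=1$, then gives
\begin{equation*}
c\int_{\Omega_\delta}|v|^2 r^{2\sigma-2}\le 2\Big(\int_{\Omega_\delta}|v|^2r^{2\sigma-2}\Big)^{1/2}\Big(\int_{\Omega}|\nabla v|^2r^{2\sigma}\Big)^{1/2}.
\end{equation*}
Dividing by the first factor and letting $\delta\to0$ (monotone convergence on the left) produces \eqref{impPoincare} with $C_P=2/c$, and at the same time shows that $\|v\|_{L^2_{\sigma-1}}$ is finite.

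I expect the only genuine obstacle to be the regularity of $r$, since the identity for $\operatorname{div}W$ holds only where $r$ is smooth. For a point this is all of $\Omega\setminus\{p\}$; for the segment $\Gamma$ is convex, so the nearest-point projection is unique and $r$ is $C^{1}$ on $\Omega\setminus\Gamma$, the only non-smoothness being the jump of $\Delta r$ across the interfaces $x_n=\pm\frac L2$, where $\nabla r$ stays continuous (no surface measure appears) and $\Delta r$ merely increases from $\frac{n-2}{r}$ to $\frac{n-1}{r}$, so $\operatorname{div}W\ge c\,r^{2\sigma-2}$ is preserved. The subtle point, relevant for a general curve, is the cut locus, where $r$ is only semiconcave and its distributional Laplacian could carry a negative singular part; there I would localize with a cutoff $\chi$ supported in a tubular neighborhood $B(\Gamma,\rho_0)$ on which the projection is single-valued and $r$ smooth, apply the argument to $\chi v$ to kill the outer boundary term, and absorb the commutator $v\nabla\chi$ — supported where $r\sim\rho_0$, so that $r^{2\sigma-2}$ and $r^{2\sigma}$ are comparable — by the ordinary weighted Poincaré inequality that follows from Lemma \ref{L:DLG_Ap}. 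The same weight comparison disposes of $\Omega\setminus B(\Gamma,\rho_0)$. It is precisely in this last step that the upper bound $\sigma<\frac{n-m}{2}$ is needed, to keep $r^{2\sigma}\in A_2$ and the norm $\|\nabla v\|_{L^2_\sigma}$ equivalent to the full $H^1_\sigma$ norm.
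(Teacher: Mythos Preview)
Your argument is correct and takes a genuinely different route from the paper's. The paper proceeds via the Riesz potential representation $|v|\lesssim I_1(|\nabla v|)$ and then invokes the Sawyer--Wheeden two-weight inequality for $I_1$ (their Theorem~\ref{T:SW}), checking that the pair $(r^{\sigma-1},r^{\sigma})$ satisfies condition \eqref{SWcond} exactly under \eqref{impPoincarecond}. Your approach is instead a direct Hardy-type inequality: integrate the divergence identity for $W=r^{2\sigma-1}\nabla r$ against $|v|^2$ and discard the inner boundary term by sign.

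What each approach buys: your proof is more elementary, avoids the fractional-integral machinery entirely, and yields the explicit constant $C_P=2/(2\sigma+n-m-2)$, which also makes transparent that only the \emph{lower} bound in \eqref{impPoincarecond} drives the core estimate. The paper's route, on the other hand, is insensitive to the geometry of $\Gamma$: the Sawyer--Wheeden condition is verified for the distance to any compact set of dimension $m$ without ever computing $\Delta r$, so no cut-locus discussion or localization is needed for a general curve. Your handling of the segment case (continuity of $\nabla r$ across the interfaces, jump of $\Delta r$ having the right sign) is clean, and your sketch for general curves via a cutoff in $B(\Gamma,\rho_0)$ combined with the $A_2$ Poincar\'e inequality is the right way to close that gap---but it is precisely this extra work that the fractional-integral approach sidesteps.
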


We give a sketch of the proof of \eqref{impPoincare}. For that, let us begin by recalling the following theorem (see \cite[Theorem 1]{SW_fracint}).
\begin{thm}\label{T:SW}
   Let $I_\alpha$ be the fractional integral: $I_\alpha(f)(x)=\int |x-y|^{\alpha-n}f(y) dy,$ and $w$ and $v$ be positive weights. If for some $r>1$ there is a constant $C_r$ such that,
\begin{equation}\label{SWcond}
  |Q|^\frac{\alpha}{n}\left(\frac{1}{|Q|}\int_Q w^r\right)^\frac{1}{2r}\left(\frac{1}{|Q|}\int_Q v^{-r}\right)^{\frac{1}{2r}}< C_r, \quad \textrm{for every cube }Q\subset \R^n
\end{equation}
Then, the inequality:
 $$\int |I_\alpha(f)(x)|^2 w(x) \dx \le C \int|\nabla f(x)|^2 v(x) \dx$$
 holds for every $f\geq 0$.
  \end{thm}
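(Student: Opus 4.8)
The plan is to regard $I_\alpha$ as a positive integral operator and to establish the estimate by the standard machinery for two--weight inequalities for fractional integrals, in the spirit of Sawyer--Wheeden and P\'erez. The genuine content is a two--weight $L^2$ bound of the form $\int |I_\alpha g|^2 w\,dx\le C\int |g|^2 v\,dx$ for nonnegative $g$, driven entirely by the power--bump condition \eqref{SWcond}; the gradient on the right of the stated inequality is a packaging device that I would remove at the very end via the Sobolev representation. Throughout I write $\langle g\rangle_Q=\frac{1}{|Q|}\int_Q g$ for averages over a cube, and I use repeatedly that the kernel $|x-y|^{\alpha-n}$ is nonnegative, so that $I_\alpha$ is order--preserving on nonnegative functions.

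I would begin by discretizing the operator. Covering all cubes by finitely many translated dyadic grids, one has for $g\ge 0$ the pointwise majorization
\begin{equation*}
I_\alpha g(x)\lesssim \sum_{Q\in\mathcal{D}} |Q|^{\frac{\alpha}{n}}\,\langle g\rangle_Q\,\mathbf{1}_Q(x),
\end{equation*}
with $\mathcal{D}$ a single dyadic grid. It then suffices to bound the dyadic positive operator on the right from $L^2(v)$ to $L^2(w)$, and here the scale factor $|Q|^{\alpha/n}$ appears with exactly the exponent featured in \eqref{SWcond}. Passing to the bilinear form by duality, and testing against $h\ge 0$ with $\|h\|_{L^2(w)}=1$, the problem becomes the control of
\begin{equation*}
\sum_{Q\in\mathcal{D}} |Q|^{\frac{\alpha}{n}}\,|Q|\,\langle g\rangle_Q\,\langle hw\rangle_Q \le C\,\Big(\int |g|^2 v\Big)^{1/2}.
\end{equation*}

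The heart of the argument is to estimate this sum using \eqref{SWcond}. On each cube I would pair $g$ with $v^{-1/2}$ through H\"older's inequality, using the exponent $r>1$ so that the $v^{-1}$--factor is measured by the $L^r$--average that appears in the hypothesis; this is precisely the step where the strict bump $r>1$ supplies extra integrability that a bare $A_2$--type condition would not, allowing one to dominate the resulting cube averages by a bounded maximal operator. Condition \eqref{SWcond} then cancels the scale factor $|Q|^{\alpha/n}$ against the geometric mean of the bumped averages of $w$ and $v^{-1}$, and the remaining sum is handled by a Carleson embedding after extracting a sparse subfamily of cubes. Summing and undoing the duality yields the two--weight bound. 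Finally, to recover the stated inequality with $\nabla f$, I would invoke the representation $0\le f(x)\le c_n I_1(|\nabla f|)(x)$, valid for $f\ge 0$ vanishing at the boundary; monotonicity of $I_\alpha$ and the semigroup property of the Riesz potentials then express the left--hand side through a fractional integral acting on $|\nabla f|$, to which the two--weight estimate established above applies.

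The main obstacle I anticipate is the passage from the local, cube--by--cube information in \eqref{SWcond} to a global bound on the operator: concretely, showing that the bump condition forces the Carleson (equivalently, Sawyer testing) condition needed to sum the dyadic series, since the naive summation over all cubes diverges and one must organize them into a sparse family and exploit the geometric decay that the $r$--bump provides. Tracking the dependence of the final constant $C$ on $C_r$ and on $r$ is a secondary technical point that I would settle within the same estimate.
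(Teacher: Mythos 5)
You could not have known this, but the paper contains no proof of this statement: it is imported verbatim (``see \cite{SW_fracint}, Theorem 1'') from Sawyer and Wheeden, and serves only as an ingredient in the proof of the improved Poincar\'e inequality, Theorem \ref{T:impPoincare}. So the question is whether your reconstruction of the Sawyer--Wheeden theorem is sound. Your core outline --- pointwise domination of $I_\alpha$ on nonnegative functions by a positive dyadic operator over finitely many shifted grids, duality, H\"older with the exponent $r>1$ so that $v^{-1}$ is measured by the bumped average in \eqref{SWcond}, then a sparse selection and Carleson embedding in which the strict bump $r>1$ supplies the geometric decay across scales --- is exactly the standard route to the two-weight bound $\int|I_\alpha g|^2 w \le C\int |g|^2 v$ for $g\ge 0$, and is close in spirit to the original argument of \cite{SW_fracint}, which predates sparse technology but extracts the same decay from the $r$-bump. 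It is of course a sketch: the verification of the Carleson/testing condition from \eqref{SWcond} is named rather than executed, and that is where all the work lies, but the steps you name are the right ones.

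The genuine flaw is in your final packaging step. To produce $\nabla f$ on the right-hand side you compose potentials: from $f\le c_n I_1(|\nabla f|)$ and monotonicity you get $I_\alpha f\lesssim I_\alpha I_1(|\nabla f|)\approx I_{\alpha+1}(|\nabla f|)$ by the semigroup property, but then the two-weight estimate you need is for $I_{\alpha+1}$, whose bump condition carries the factor $|Q|^{(\alpha+1)/n}$ --- not the $|Q|^{\alpha/n}$ assumed in \eqref{SWcond}. As written, your argument establishes a conclusion whose hypothesis is off by one order of scaling. The resolution is that the statement as printed is itself garbled: Sawyer--Wheeden's Theorem 1 concludes $\int |I_\alpha f|^2 w\,dx \le C\int |f|^2 v\,dx$, with $f$ rather than $\nabla f$ on the right, and that is how the paper actually invokes it --- in the proof of Theorem \ref{T:impPoincare} it is applied with the density $f=\chi_Q|\nabla v|$, for which $\nabla f$ is not even meaningful. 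So your central two-weight bound is precisely the correct theorem; the representation $|v|\le C\,I_1(|\nabla v|)$ belongs to the application downstream, as in the paper's proof of Theorem \ref{T:impPoincare}, not inside this statement. Delete the semigroup step and your outline matches the true result.
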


We want to apply this result taking $w=r^\beta$ and $v=r^\sigma$. The following lemma indicates how should $\beta$ and $\sigma$ be taken. It can be easily proven exactly as Lemma \ref{L:DLG_Ap}, so we refer the reader to \cite[Lemma 3.3]{DLG_Stokes}:
\begin{lemma} Let $F\subset \R^n$ be a compact set of dimension $m$, and let $r(x)$ denote the distance from $x$ to $F$. Then, if we take:
  $$\sigma = \beta+\alpha,\quad \beta>-\frac{n-m}{2},\quad \sigma<\frac{n-m}{2},$$
  the weights $w=r^\beta$ and $v=r^\sigma$ satisfy \eqref{SWcond}.
\end{lemma}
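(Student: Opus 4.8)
The plan is to verify the balance condition \eqref{SWcond} directly for the power weights $w=r^\beta$ and $v=r^\sigma$, following the same strategy used for the $A_2$ characterization in Lemma \ref{L:DLG_Ap} (see \cite[Lemma 3.3]{DLG_Stokes}). Inserting the weights, \eqref{SWcond} asks that the quantity
$$ |Q|^{\frac{\alpha}{n}}\left(\frac{1}{|Q|}\int_Q r(x)^{\beta r}\,dx\right)^{\frac{1}{2r}}\left(\frac{1}{|Q|}\int_Q r(x)^{-\sigma r}\,dx\right)^{\frac{1}{2r}} $$
be bounded by a constant $C_r$ uniformly over all cubes $Q\subset\R^n$, for a suitable choice of $r>1$. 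Thus everything reduces to estimating weighted averages of the form $\frac{1}{|Q|}\int_Q r(x)^s\,dx$ for the two exponents $s=\beta r$ and $s=-\sigma r$, and to keeping track of the resulting powers of the side length $\ell(Q)$.

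The geometric input, exactly as in the $A_2$ case, is the tubular-neighbourhood volume estimate associated with the $m$-dimensional set $F$: for $x_0\in F$ and $0<t<\rho$ one has $|\{x\in B(x_0,\rho):\ r(x)<t\}|\sim \rho^m t^{\,n-m}$. Writing the average over the level sets of $r$ (a layer-cake computation), this yields two regimes. If a cube $Q$ of side $\ell(Q)$ meets a fixed neighbourhood of $F$, then $\frac{1}{|Q|}\int_Q r(x)^s\,dx\sim \ell(Q)^s$, provided the exponent lies above the local integrability threshold $s>-(n-m)$. If instead $d(Q):=\text{dist}(Q,F)\ge \ell(Q)$, then $r(x)\sim d(Q)$ throughout $Q$ and $\frac{1}{|Q|}\int_Q r(x)^s\,dx\sim d(Q)^s$, with no integrability restriction.

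The hypotheses on $\beta$ and $\sigma$ enter precisely to guarantee the integrability threshold once the $L^r$ bump has enlarged the exponents. For $s=\beta r$ we need $\beta r>-(n-m)$, and for $s=-\sigma r$ we need $\sigma r<n-m$. Since the bounds $\beta>-\frac{n-m}{2}$ and $\sigma<\frac{n-m}{2}$ are strict, they leave more than a factor $2$ of room, so both requirements hold simultaneously for every $r$ in the nonempty range $1<r<\min\{\tfrac{n-m}{-\beta},\tfrac{n-m}{\sigma}\}$ (each bound read as $+\infty$ when the corresponding exponent is nonnegative). Fixing such an $r>1$ makes both averages finite and comparable to the appropriate power of the local scale of $Q$.

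It then remains to assemble the product in the two regimes. In each case the factor $|Q|^{\frac{\alpha}{n}}=\ell(Q)^\alpha$ combines with the two bumped averages so that the whole expression reduces to a single power of $\ell(Q)$ (resp. $d(Q)$) whose exponent is a fixed linear combination of $\alpha$, $\beta$ and $\sigma$; this exponent vanishes precisely by the relation $\sigma=\beta+\alpha$ imposed in the statement, leaving a bound $C_r$ independent of $Q$. I expect the only genuinely delicate point to be this integrability-after-bumping step: the strict inequalities in the statement are what ensure the existence of an admissible exponent $r>1$, and it is there --- rather than in the elementary volume estimate --- that the full force of the admissible range of $\beta$ and $\sigma$ is used.
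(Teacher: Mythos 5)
Your overall strategy coincides with the paper's intended one: the paper gives no self-contained proof of this lemma, saying only that it ``can be easily proven exactly as Lemma \ref{L:DLG_Ap}'' and deferring to \cite[Lemma 3.3]{DLG_Stokes}, and your verification --- the tubular volume estimate $|\{x\in B(x_0,\rho):\ r(x)<t\}|\sim \rho^m t^{\,n-m}$, the dichotomy between cubes with $d(Q)\lesssim \ell(Q)$ and cubes with $d(Q)\geq \ell(Q)$, and the selection of a bump exponent $r>1$ made possible by the strict hypotheses --- is exactly that adaptation, with the integrability bookkeeping ($\beta r>-(n-m)$, $\sigma r<n-m$, hence any $1<r<\min\{\tfrac{n-m}{-\beta},\tfrac{n-m}{\sigma}\}$) done correctly for the normalization you chose.

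There is, however, a genuine gap in the final assembly: the cancellation you assert does not occur with the exponents as you wrote them. In the near regime your averages give $\bigl(\frac{1}{|Q|}\int_Q r^{\beta r}\bigr)^{1/(2r)}\sim \ell(Q)^{\beta/2}$ and $\bigl(\frac{1}{|Q|}\int_Q r^{-\sigma r}\bigr)^{1/(2r)}\sim \ell(Q)^{-\sigma/2}$, so the full product is $\ell(Q)^{\alpha+(\beta-\sigma)/2}=\ell(Q)^{\alpha/2}$ under $\sigma=\beta+\alpha$: the exponent is $\alpha/2$, not $0$, and the product is unbounded over large cubes (similarly $\ell(Q)^{\alpha}d(Q)^{(\beta-\sigma)/2}\le d(Q)^{\alpha/2}$ in the far regime). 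Exact cancellation in \eqref{SWcond} requires the balance $\sigma-\beta=2\alpha$, or equivalently that the weights enter squared, $w=r^{2\beta}$, $v=r^{2\sigma}$. The squared normalization is the one the paper actually uses downstream: in the proof of Theorem \ref{T:impPoincare} it takes $w=r^{\sigma-1}$, $v=r^{\sigma}$ with $\alpha=1$ to conclude $\|v\|_{L^2_{\sigma-1}}\le C_P\|\nabla v\|_{L^2_{\sigma}}$, whose integrands carry $r^{2(\sigma-1)}$ and $r^{2\sigma}$ --- so the factor-of-two mismatch originates in the paper's transcription of the Sawyer--Wheeden condition, but your blind proof inherits it and, as written, does not verify the stated condition. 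The repair is mechanical: rerun your computation with $r^{2\beta r}$ and $r^{-2\sigma r}$ in the averages. The integrability thresholds become $2\beta r>-(n-m)$ and $2\sigma r<n-m$, which still admit some $r>1$ precisely because the hypotheses are the strict half-range bounds (though the room shrinks from ``a factor $2$'' to ``just above $1$'', confirming your instinct that this is where the full strength of the admissible range is used); the near-regime exponent is then $\alpha+\beta-\sigma=0$ and the far-regime product is $\ell(Q)^{\alpha}d(Q)^{\beta-\sigma}\le 1$, uniformly over all cubes including arbitrarily large ones, so no separate large-scale case is needed.
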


Finally, we are able to prove the Theorem:

\begin{proof}[Proof of Theorem \ref{T:impPoincare}]
  It is a well known fact that given a function $f\in C^1$, with support or mean value zero on a cube $Q$, the inequality $|f(x)|\le C|I_1(\nabla f)(x)|$ holds for every $x\in Q$. Since $v|_{\partial\Omega}=0$, it can be extended by zero to a cube $Q\supset\Omega$.
  Taking $f=\chi_Q |\nabla v|$, where $\chi_Q$ is the characteristic of $Q$, we have $\|v\|_{L^2_{\sigma-1}(\Omega)}\le C\|I_1(\nabla v)\|_{L^2_{\sigma-1}(\Omega)}$.
  We apply Theorem \ref{T:SW} with $\alpha = 1$, $w=r^{\sigma-1}$ and $v=r^{\sigma}$ and the result follows. The proof is completed by a density argument.
\end{proof}

\begin{rmk}
Observe that \eqref{singularwelldefcond} implies \eqref{impPoincarecond}, except for the case $n=2$ and $m=1$, where \eqref{impPoincarecond} gives and empty range.
\end{rmk}



\section{The discrete problem}\label{S:discreteproblem}

In this section we discuss some general aspects of the discrete version of our problem.
Let $\mathcal{T}_h$ be a triangulation of $\Omega$. For every $T\in\cT_h$ we take:
$$r_T=d(T,\Gamma), \quad \overline{r}_T=\sup_{x\in T} d(x,\Gamma), \quad h_T=diam(T).$$
$S_T$ denotes the patch of elements adjacents to $T$: $S_T = \{T'\in\cT_h:\; \overline{T'}\cap\overline{T}\nempty\}.$

We distinguish two classes of elements:
$$\cT^{near}_h = \Big\{T\in\cT_h:\; \overline{S_T}\cap\Gamma\nempty\Big\},\quad \cT^{far}_h = \cT_h\setminus \cT^{near}_h.$$
Sometimes with a little abuse of notation we use $\cT^{near}_h$ and $\cT^{far}_h$ to denote the regions $\cup_{T\in\cT^{near}_h}T$ and $\cup_{T\in\cT^{far}_h}T$, respectively. For $T\in\cT_h^{near}$, we denote:
$S_T' = \{T'\in\cT^{far}_h:\; \bar{T}\cap\bar{T}'\neq\emptyset\}.$

We define the weighted discrete space:
$$W_h = \{u_h\in C(\Omega):\; u_h|_T\in \cP_1(T)\,\forall T\in\cT_h,\; u_h|_{\partial\Omega} = 0\},$$
equipped with the weighted norm:
$$\|u_h\|_{W_h}^2=\sum_{T\in\cT_h} \overline{r_T}^{2\sigma}\|u_h\|_{L^2(T)}^2.$$

The well posedness of the model problem \eqref{modelproblem} in $W_h$ as well as its stability is proved in \cite{D_1d3d} for $m=1$, $n=3$ and in \cite{AGM_delta} for $m=0$. As a conclusion we have that for $\sigma$ satisfying \eqref{A2cond} the optimal estimate for the Galerkin approximation
\begin{equation}\label{stability}
  \|u-u_h\|_{W_{\sigma}}\le C \inf_{v_h\in W_h}\|u-v_h\|_{W_{\sigma}}
\end{equation}
holds. See \cite[Section 3]{AGM_delta} and \cite[Theorem 3.4]{D_1d3d}.

Therefore, in order to obtain estimates for the $W_\sigma$ norm of the discretization error it is enough to prove estimates for the $W_\sigma$ norm of the interpolation error.  However, in order to obtain estimates in $L^2_\beta$, we need a weighted version of Aubin-Nitsche's lemma. It is particularly interesting the case of the standard $L^2$ norm, given by $\beta=0$.

Let us take $g_{\beta} = r^{2\beta}|u-u_h|$, then, we have $\|g_{\beta}\|_{L^2_{-\beta}(\Omega)}=\|u-u_h\|_{L^2_{\beta}(\Omega)}$. We want to estimate:
$$\|u-u_h\|^2_{L^2_{\beta}(\Omega)} = \langle u-u_h,g_{\beta}\rangle$$
For that, we follow the classical argument of Aubin-Nitsche's lemma setting the problem of finding $\varphi_\beta\in W_{-\sigma}$ such that:
 \begin{equation}\label{adjoint}
   \langle \nabla\varphi_\beta,\nabla w\rangle = \langle g_{\beta},w\rangle, \quad\forall w\in W_{\sigma}.
\end{equation}
Observe that if $\sigma\le \beta+1$, then $g_{\beta}\in L^2_{-\beta}\subset L^2_{1-\sigma} = (L^2_{\sigma-1})'$, and, thanks to \eqref{impPoincare}, $W_\sigma\subset L^2_{\sigma-1}$.
Hence, the map $g_{\beta}:W_{\sigma}\to\R$ given by $g_{\beta}(w) = \langle g_{\beta},w\rangle$ is a linear continuous map.
Moreover, since \eqref{singularwelldefcond} holds, $-\sigma$ satisfies \eqref{A2cond}, and the adjoint problem \eqref{adjoint} is well posed, and admits a unique solution $\varphi_\beta\in W_{-\sigma}$.

We continue as in the unweighted case, since $u-u_h\in W_{\sigma}$:
$$\langle \nabla \varphi_\beta, \nabla (u-u_h)\rangle = \langle g_\beta,u-u_h\rangle,$$
whereas, for every $\varphi_h\in W_h$: $\langle \nabla \varphi_h,\nabla (u-u_h) \rangle = 0,$
which gives:
$$\langle g_\beta,u-u_h \rangle = \langle \nabla (u-u_h), \nabla(\varphi_h-\varphi_h)\rangle \le \|u-u_h\|_{W_{\sigma}}\|\varphi_\beta-\varphi_h\|_{W_{-\sigma}},$$
and we have completed the proof of our weighted Aubin-Nitsche's lemma:
\begin{lemma}[Weighted Aubin-Nitche's Lemma]\label{L:aubinnitsche}
  Given $\sigma\le\beta+1$ satisfying \eqref{singularwelldefcond}, we have:
  $$\|u-u_h\|_{L^2_{\beta}(\Omega)}\le
  \|u-u_h\|_{W_{\sigma}}\inf_{\varphi_h\in W_h}\|\varphi_\beta-\varphi_h\|_{W_{-\sigma}}$$
  where $\varphi_\beta$ is the solution of \eqref{adjoint}.
\end{lemma}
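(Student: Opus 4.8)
The plan is to follow the classical Aubin--Nitsche duality argument, adapted to the weighted setting, which the excerpt has in fact essentially laid out in the discussion preceding the statement. The goal is to convert the task of bounding $\|u-u_h\|_{L^2_\beta(\Omega)}$ into bounding the interpolation error of the adjoint solution $\varphi_\beta$ in the $W_{-\sigma}$ norm, multiplied by the already-available Galerkin error estimate in the $W_\sigma$ norm.

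First I would set $g_\beta = r^{2\beta}|u-u_h|$ and observe the identity $\|u-u_h\|^2_{L^2_\beta(\Omega)} = \langle u-u_h, g_\beta\rangle$, which recasts the squared norm as a duality pairing. The crucial preliminary step is to verify that the adjoint problem \eqref{adjoint} is well posed, i.e.\ that $g_\beta$ defines a continuous linear functional on $W_\sigma$ so that a unique solution $\varphi_\beta\in W_{-\sigma}$ exists. Under the hypothesis $\sigma\le\beta+1$ one has the chain of inclusions $g_\beta\in L^2_{-\beta}\subset L^2_{1-\sigma}=(L^2_{\sigma-1})'$, and by the improved Poincar\'e inequality of Theorem \ref{T:impPoincare} one has $W_\sigma\subset L^2_{\sigma-1}$; combining these shows the pairing $w\mapsto\langle g_\beta,w\rangle$ is bounded on $W_\sigma$. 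Since \eqref{singularwelldefcond} holds for $\sigma$, the exponent $-\sigma$ satisfies the $A_2$ condition \eqref{A2cond}, so the existence and uniqueness result quoted from \cite{D_1d3d,AGM_delta} applies to \eqref{adjoint}.

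Next I would exploit Galerkin orthogonality. Testing \eqref{adjoint} with $w=u-u_h\in W_\sigma$ gives $\langle\nabla\varphi_\beta,\nabla(u-u_h)\rangle=\langle g_\beta,u-u_h\rangle$, while the discrete problem yields $\langle\nabla\varphi_h,\nabla(u-u_h)\rangle=0$ for every $\varphi_h\in W_h$. Subtracting, inserting an arbitrary $\varphi_h$, and applying the Cauchy--Schwarz inequality in the weighted inner product produces
\[
\langle g_\beta,u-u_h\rangle=\langle\nabla(u-u_h),\nabla(\varphi_\beta-\varphi_h)\rangle\le\|u-u_h\|_{W_\sigma}\|\varphi_\beta-\varphi_h\|_{W_{-\sigma}}.
\]
Taking the infimum over $\varphi_h\in W_h$ and recalling $\|u-u_h\|^2_{L^2_\beta}=\langle g_\beta,u-u_h\rangle$, one divides through (or rather reads off the factorization directly) to obtain the stated bound.

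The only genuine obstacle is the well-posedness step, specifically checking that the two weighted inclusions align under the constraint $\sigma\le\beta+1$ and that the adjoint bilinear form retains coercivity when the weight exponent flips sign from $\sigma$ to $-\sigma$; everything else is a routine transcription of the standard duality computation. Since the excerpt already carries out these verifications explicitly before stating the lemma, the proof is essentially complete, and I would simply record that the displayed chain of inequalities, together with the well-posedness of \eqref{adjoint}, establishes the claim.
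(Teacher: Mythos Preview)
Your proposal is correct and follows essentially the same approach as the paper: define $g_\beta=r^{2\beta}|u-u_h|$, verify well-posedness of the adjoint problem \eqref{adjoint} via the inclusion $g_\beta\in L^2_{-\beta}\subset (L^2_{\sigma-1})'$ together with the improved Poincar\'e inequality, then use Galerkin orthogonality and the weighted Cauchy--Schwarz inequality to reach the desired bound. The paper carries out precisely this argument in the paragraphs immediately preceding the lemma statement.
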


The weighted regularity of the solution of Poisson's equation for weights in the $A_p$ class is proven in \cite{DST_ApPoisson}, so we have that:
$$\|\varphi_\beta\|_{H^2_{-\beta}(\Omega)}\lesssim \|g_\beta\|_{L^2_{-\beta}(\Omega)}=\|u-u_h\|_{L^2_{\beta}(\Omega)}.$$
 Thence, estimates for the discretization error in $L^2_\beta$ will follow from estimates of the interpolation error for $u$ in $W_{\sigma}$, and of the interpolation error for $\varphi_\beta$ in $W_{-\sigma}.$

For the rest of the paper we use $\sigma$ to denote an exponent satisfying \eqref{singularwelldefcond}, so $u\in W_\sigma$.
We use $\beta\geq\sigma-1$ to denote an exponent corresponding to a space $L^2_\beta$ where $u$ belongs.  For convinience, we also assume $\beta>\frac{n-m}{4}-1$ for $m=0$ or $n=3$, $m=1$ and $\beta>\frac{1}{4}$ for $m=1$, $n=2$.
This guarantees that $g_\beta\in L^2$, and $\varphi_\beta\in H^2$, which allows us to use a standard interpolator for estimating the error for problem \eqref{adjoint}.

\section{Isotropic graded meshes for point-Delta sources}\label{S:pointfem}

In the case of a point Dirac delta ($m=0$, $\Gamma=\{{\bf 0}\}$) we take an isotropically graded mesh $\cT_h$, according to the rule:
$$
h_T \sim \left\{\begin{array}{cl}
h^{\frac{1}{\mu}} & \textrm{ if } {\bf 0}\in\bar{T}, \\
h r_T^{1-\mu} & \textrm{ if } 0<d(T,{\bf 0})\le 1, \\
h & \textrm{ if } 1< d(T,{\bf 0}),
\end{array} \right.
$$
for some $\mu$ to be determined. We assume that the origin of coordinates ${\bf 0}$ where the singularity lies is one of the vertices of $\cT_h$. We divide the analysis in two parts, studying first the error in $W_{\sigma}$, and afterwards the error in $L^2_{\beta}$.

\subsection{Estimates in $W_{\sigma}$}
Since $u$ does not belong to $H^1$ near the singularity, we need to introduce a suitable interpolation operator $I_h: K_{\sigma-1}^2\to W_h$, for $\sigma>\frac{n}{2}-1$. Let $\{x_i\}_{i=1}^{N_h}$ be the set of nodes of $\cT_h$, and $\{\phi_i\}$ the nodal basis: $\phi_i|_T\in\cP_1$, $\phi_i(x_j)=\delta_{ij}$. We define:
\begin{equation}\label{Ihdef}
  I_h u (x) = \sum_{i=1}^{N_h} a_i \phi_i(x).
\end{equation}
For nodes $x_i\in\cT^{far}_h$, $I_h$ is the Lagrange interpolator, given by $a_i = u(x_i).$
This definition is allowed by the fact that functions in $K^2_{\sigma-1}$ belong to $H^2(\cT^{far}_h)$. On the other hand, for nodes  $x_i\in\cT_h^{near}$ we take $a_i = 0.$

The following lemma was proved in \cite[Lemma 3.5]{D_1d3d}.

\begin{lemma}\label{L:cotasIh}
The interpolator $I_h$ satisfies the following properties:
\begin{equation}\label{Ihfar}
|u-I_h u|_{H^\ell(T)}\le C h_T^{2-\ell}|u|_{H^2(T)},\quad \textrm{if }T\in \cT^{far}_h, \,\ell=0,1
\end{equation}
\begin{equation}\label{Ihnear}
|I_h u|_{K^1_{\sigma-1}(T)}\le C_\sigma \|u\|_{K^2_{\sigma-1}(S_T')},\quad \textrm{if }T\in \cT_h^{near},
\end{equation}
for every $u\in K^2_{\sigma-1}$ and $\ell=0,1$. Moreover, $C_\sigma \sim (2\sigma+n)^{-\frac{1}{2}}$.
\end{lemma}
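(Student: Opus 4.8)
The plan is to treat the two bounds separately, since \eqref{Ihfar} is classical while \eqref{Ihnear} carries the genuine content, including the explicit dependence of the constant on $\sigma$.

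For \eqref{Ihfar} I would first observe that on a far element $T\in\cT_h^{far}$ the weight $r(x)^{2(\sigma-1+|\alpha|)}$ is bounded above and below by constants comparable to $\overline{r}_T^{2(\sigma-1+|\alpha|)}$, so a function of $K^2_{\sigma-1}$ restricts to an $H^2(T)$ function there. Moreover every vertex of a far element belongs to that far element and is therefore a far node, so $I_h u|_T$ is the ordinary nodal Lagrange interpolant and reproduces $\cP_1(T)$. The estimate then follows from the standard Deny--Lions/Bramble--Hilbert argument: pass to a reference element, use that the interpolation-error functional annihilates affine functions, and scale back, the shape-regularity of $\cT_h$ guaranteeing that the constants are uniform in $T$.

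The core of the lemma is \eqref{Ihnear}. On a near element $T$, $I_h u|_T\in\cP_1$ is the affine function taking the value $u(x_i)$ at the outer vertices (those shared with a far element, hence far nodes) and $0$ at the inner ones, so $\nabla I_h u$ is constant on $T$ with $|\nabla I_h u|\lesssim h_T^{-1}\max_i|u(x_i)|$, the maximum running over the far vertices of $T$. Hence
\[
|I_h u|_{K^1_{\sigma-1}(T)}^2=\int_T|\nabla I_h u|^2 r^{2\sigma}\,dx\lesssim h_T^{-2}\Big(\max_i|u(x_i)|\Big)^2\int_T r^{2\sigma}\,dx .
\]
Two ingredients now enter. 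First, since a near element touching the singularity behaves like a simplex of size $h_T$ with a vertex at the origin, a radial computation gives $\int_T r^{2\sigma}\,dx\sim (2\sigma+n)^{-1}h_T^{2\sigma+n}$; this is exactly where the factor $(2\sigma+n)^{-1/2}$ is produced. Second, to control the nodal values I would use that on the adjacent far elements $T'\in S_T'$ one has $r\sim h_T$, so that $\|u\|_{K^2_{\sigma-1}(T')}^2\sim\sum_{|\alpha|\le2}h_T^{2(\sigma-1+|\alpha|)}\|D^\alpha u\|_{L^2(T')}^2$; combining this with the scaled Sobolev embedding $H^2(T')\hookrightarrow C^0(T')$ (valid for $n\le3$) yields $|u(x_i)|^2\lesssim h_T^{2-n-2\sigma}\|u\|_{K^2_{\sigma-1}(S_T')}^2$.

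Substituting both estimates into the displayed bound, the powers of $h_T$ cancel,
\[
|I_h u|_{K^1_{\sigma-1}(T)}^2\lesssim h_T^{-2}\cdot h_T^{2-n-2\sigma}\|u\|_{K^2_{\sigma-1}(S_T')}^2\cdot(2\sigma+n)^{-1}h_T^{2\sigma+n}=(2\sigma+n)^{-1}\|u\|_{K^2_{\sigma-1}(S_T')}^2,
\]
which is \eqref{Ihnear} with $C_\sigma\sim(2\sigma+n)^{-1/2}$. I expect the main obstacle to be the bookkeeping near $\Gamma$: one must handle elements on which $r$ vanishes, verify that the weight integral and the scaled embedding produce exactly the compensating powers of $h_T$, and keep the geometric constants independent of $T$ under the grading. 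The remaining, more routine point is to make the far/near vertex classification precise enough that the Lagrange reproduction on far elements and the presence of nonzero far-vertex values on near elements both hold uniformly over the mesh.
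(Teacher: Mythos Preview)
Your proposal is correct and follows essentially the same route that the paper indicates: the paper does not prove the lemma in full but cites \cite[Lemma 3.5]{D_1d3d} and remarks that \eqref{Ihnear} is obtained just as the later stability bound \eqref{Pihstability}, whose proof has exactly the $A\cdot B$ structure you use (bounding $|a_i|$ via the scaled $H^2\hookrightarrow L^\infty$ embedding on an adjacent far element, and $|\phi_i|_{K^1_{\sigma-1}(T)}$ via $|\nabla\phi_i|\sim h_T^{-1}$ together with the weight integral). Your explicit identification of $\int_T r^{2\sigma}\,dx\sim(2\sigma+n)^{-1}h_T^{2\sigma+n}$ as the source of $C_\sigma\sim(2\sigma+n)^{-1/2}$ is precisely what is implicit in that argument.
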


Inequality \eqref{Ihfar} is due to the fact that $I_h|_T$ is the Lagrange interpolator $\forall T\in\cT^{far}$. The stability estimate \eqref{Ihnear} is very similar to \eqref{Pihstability}, that is proven later for the classical Lagrange interpolator. With this, we can prove:

\begin{thm}\label{T:approxIh}
Let $\sigma$ satisfy \eqref{singularwelldefcond} and take $\mu\le\sigma-1-\eta$, then:
\begin{equation}\label{approxIh}
|u-I_h u|_{K^1_{\sigma-1}(\Omega)} \le C_{\sigma} h \|u\|_{K^2_{\eta}(\Omega)}.
\end{equation}
\end{thm}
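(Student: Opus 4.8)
The plan is to estimate the interpolation error separately on the far region and the near region, since $I_h$ is defined differently on each and the available bounds from Lemma \ref{L:cotasIh} are of different types. Writing
\[
|u-I_h u|_{K^1_{\sigma-1}(\Omega)}^2 = \sum_{T\in\cT^{far}_h} |u-I_h u|_{K^1_{\sigma-1}(T)}^2 + \sum_{T\in\cT^{near}_h} |u-I_h u|_{K^1_{\sigma-1}(T)}^2,
\]
I would treat the two sums in turn and show each is bounded by $C_\sigma^2 h^2 \|u\|_{K^2_\eta(\Omega)}^2$.

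For the far region, first I would reconcile the weighted $K^1_{\sigma-1}$ seminorm on a single element $T$ with the unweighted $H^1$ seminorm appearing in \eqref{Ihfar}. On $T\in\cT^{far}_h$ the weight $r(x)$ is comparable to $\overline{r}_T\sim r_T$ (distances to the singularity do not oscillate within a single far element, by the grading rule and a standard patch argument), so $|u-I_h u|_{K^1_{\sigma-1}(T)}\sim \overline{r}_T^{\,\sigma}|u-I_h u|_{H^1(T)}$ and likewise $|u|_{H^2(T)}\sim \overline{r}_T^{-(\sigma+1)}|u|_{K^2_{\sigma-1}(T)}$, where the shift by $2$ in the weight exponent between $H^1$ and $H^2$ accounts for the extra derivative. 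Feeding these into \eqref{Ihfar} with $\ell=1$ gives $|u-I_h u|_{K^1_{\sigma-1}(T)}\lesssim h_T\,\overline{r}_T^{-1}|u|_{K^2_{\sigma-1}(T)}$. The key is then to insert the grading rule $h_T\sim h\,r_T^{1-\mu}$, which yields a factor $h\,\overline{r}_T^{-\mu}$; converting from the weight $\sigma-1$ to the weight $\eta$ costs another power $\overline{r}_T^{\sigma-1-\eta}$, and the hypothesis $\mu\le\sigma-1-\eta$ is exactly what makes the total power $\overline{r}_T^{\sigma-1-\eta-\mu}$ nonnegative (using $\overline{r}_T\le 1$ on the graded region), so that it can be discarded. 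Summing over $T\in\cT^{far}_h$ then leaves $h^2\|u\|_{K^2_\eta(\Omega)}^2$.

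For the near region I would use \eqref{Ihnear} together with the triangle inequality $|u-I_h u|_{K^1_{\sigma-1}(T)}\le |u|_{K^1_{\sigma-1}(T)}+|I_h u|_{K^1_{\sigma-1}(T)}$. The term $|u|_{K^1_{\sigma-1}(T)}$ is controlled directly since $u\in K^2_{\eta}\subset K^1_{\sigma-1}$ locally after absorbing the weight difference, and $|I_h u|_{K^1_{\sigma-1}(T)}\le C_\sigma\|u\|_{K^2_{\sigma-1}(S_T')}$ by \eqref{Ihnear}. On near elements $\overline{r}_T\sim h_T\sim h^{1/\mu}$, so again converting the weight from $\sigma-1$ to $\eta$ produces the favourable factor $h^{(\sigma-1-\eta)/\mu}\le h$ (using $\sigma-1-\eta\ge\mu$), and a finite-overlap property of the patches $S_T'$ ensures the sum over near elements of $\|u\|_{K^2_{\eta}(S_T')}^2$ is bounded by $C\|u\|_{K^2_\eta(\Omega)}^2$. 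Tracking the constant $C_\sigma\sim(2\sigma+n)^{-1/2}$ through \eqref{Ihnear} gives the asserted dependence in \eqref{approxIh}.

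The main obstacle I anticipate is the near-region bookkeeping: the interpolant there is set to zero rather than matching $u$, so one must verify that simply bounding $|u|$ and $|I_h u|$ separately does not lose the optimal rate, and that the weight conversion genuinely produces a nonnegative power of $\overline{r}_T$ only under the stated relation $\mu\le\sigma-1-\eta$. Care is also needed in the far region to justify that $r(x)\sim\overline{r}_T$ uniformly on each element and across its patch (so that weights factor out of the integrals), which relies on the grading being sufficiently smooth that neighbouring elements have comparable sizes; this is standard for graded meshes but should be stated explicitly. Once these weight-comparability and finite-overlap facts are in hand, the estimate reduces to the routine algebra of collecting powers of $h$ and $\overline{r}_T$ and summing.
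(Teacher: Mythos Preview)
Your proposal is correct and follows essentially the same route as the paper's proof: split into far and near elements, on far elements factor the weight out, apply \eqref{Ihfar} with $\ell=1$, insert the grading rule $h_T\sim h\,r_T^{1-\mu}$, and use $\mu\le\sigma-1-\eta$ to make the residual power of $\overline{r}_T$ nonnegative; on near elements use the triangle inequality together with \eqref{Ihnear}, convert the weight via $\overline{r}_T\sim h^{1/\mu}$ to extract the factor $h^{(\sigma-1-\eta)/\mu}\le h$, and conclude by finite overlap of the patches. The only item you leave implicit is the trivial outer region $d(T,\mathbf{0})>1$, where $h_T\sim h$ and the weight is bounded above and below, so \eqref{Ihfar} gives the estimate directly.
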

\begin{proof}
We prove the result elementwise. For every $T$ such that $d(T,{\bf 0})>1$, the result follows directly from \eqref{Ihfar}. For $T\in\cT^{far}_h$, such that $0<d(T,{\bf 0})\le 1$:
\begin{align*}
|u-I_h u|_{K^1_{\sigma-1}(T)} &\le \bar{r}_{T}^{\sigma} |u-I_h u|_{H^1(T)} \lesssim \bar{r}_T^{\sigma} h_T|u|_{H^2(T)} \lesssim \bar{r}_T^{\sigma} h_Tr_T^{-\eta-2}|u|_{K^2_{\eta}(T)} \\
&\lesssim r_T^{\sigma-\eta-2} hr_T^{1-\mu} |u|_{K^2_{\eta}(T)} \lesssim h r_T^{\sigma-1-\eta-\mu} |u|_{K^2_{\eta}(T)}\lesssim h|u|_{K^2_{\eta}(T)},
\end{align*}
where in the last inequality we used the condition on $\mu$.

Now, for $T\in\cT_h^{near}$, we use \eqref{Ihnear} and the condition on $\mu$ obtaining:
\begin{align*}
|u-I_h u|_{K^1_{\sigma-1}(T)} &\le |u|_{K^1_{\sigma-1}(T)}+|I_h u|_{K^1_{\sigma-1}(S'_T)}\le \bar{r}_T^{\sigma-1-\eta}|u|_{K^1_{\eta}(T)}+C_{\sigma}\|u\|_{K^2_{\sigma-1}(S_T')} \\
&\le h^{\frac{\sigma-1-\eta}{\mu}}|u|_{K^1_{\eta}(T)}+C_{\sigma} h^{\frac{\sigma-1-\eta}{\mu}}\|u\|_{K^2_{\eta}(S_T')}
\lesssim C_{\sigma} h\|u\|_{K^2_{\eta}(S_T)}.
\end{align*}
\end{proof}

Thanks to \eqref{stability}, we have the following corollary which gives optimal rates of convergence in $W_{\sigma}$ and, equivalently, in $H^1_{\sigma}$ and in $K^1_{\sigma-1}$:
\begin{coro}\label{C:pointK1sigma}
  If $\sigma$ satisfies \eqref{singularwelldefcond} and we take $\mu\le\sigma-1-\eta$:
\begin{equation}\label{pointK1sigma}
\|u-u_h\|_{W_{\sigma}(\Omega)}\lesssim h\|u\|_{K^2_{\eta}(\Omega)}.
\end{equation}
\end{coro}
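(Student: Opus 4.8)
The plan is to obtain \eqref{pointK1sigma} as a direct consequence of the quasi-optimality estimate \eqref{stability} combined with the interpolation bound \eqref{approxIh} from Theorem \ref{T:approxIh}. The key observation is that the Galerkin stability estimate \eqref{stability} holds for any $\sigma$ satisfying \eqref{A2cond}, and the hypothesis \eqref{singularwelldefcond} on $\sigma$ implies \eqref{A2cond}, so the quasi-optimality is available here.

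First I would recall that, by the definition of the discrete space $W_h$ and the fact that $I_h u \in W_h$ (the interpolant is continuous, piecewise linear, and vanishes on $\partial\Omega$), the interpolant $I_h u$ is an admissible competitor in the infimum appearing in \eqref{stability}. Second, I would invoke \eqref{stability} to write
\begin{equation*}
\|u-u_h\|_{W_{\sigma}} \le C \inf_{v_h\in W_h}\|u-v_h\|_{W_{\sigma}} \le C \|u-I_h u\|_{W_{\sigma}}.
\end{equation*}
Third, I would use that the $W_\sigma$ norm coincides (by definition, $\|\cdot\|_{W_\sigma}=\|\nabla\cdot\|_{L^2_\sigma}$) with the $K^1_{\sigma-1}$ seminorm of the error, up to the equivalence of these weighted norms already established in the excerpt; that is, $\|u-I_h u\|_{W_\sigma}$ is controlled by $|u-I_h u|_{K^1_{\sigma-1}(\Omega)}$. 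Finally, I would apply Theorem \ref{T:approxIh} under the stated hypothesis $\mu\le\sigma-1-\eta$ to bound this seminorm by $C_\sigma h\|u\|_{K^2_\eta(\Omega)}$, which yields \eqref{pointK1sigma}.

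The argument is essentially a chaining of two already-proved results, so I do not anticipate a serious obstacle. The one point requiring mild care is the identification of the norm appearing in the stability estimate \eqref{stability} with the seminorm controlled in Theorem \ref{T:approxIh}: one must ensure that $\|\cdot\|_{W_\sigma}$, defined via $\|\nabla\cdot\|_{L^2_\sigma}$, matches the weighted gradient quantity $|\cdot|_{K^1_{\sigma-1}}=\bigl(\int|\nabla v|^2 r^{2\sigma}\bigr)^{1/2}$ up to the constants tracked through $C_\sigma$. Once this identification is made, the hypotheses of Theorem \ref{T:approxIh} (namely \eqref{singularwelldefcond} on $\sigma$ and the grading condition $\mu\le\sigma-1-\eta$) coincide exactly with those of the corollary, and the conclusion follows without further computation.
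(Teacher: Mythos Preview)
Your proposal is correct and follows exactly the paper's approach: the corollary is stated immediately after Theorem~\ref{T:approxIh} with the single remark ``Thanks to \eqref{stability}'', i.e., chain the quasi-optimality \eqref{stability} with the interpolation bound \eqref{approxIh}. Your observation that $\|\cdot\|_{W_\sigma}=\|\nabla\cdot\|_{L^2_\sigma}=|\cdot|_{K^1_{\sigma-1}}$ (since the weight exponent in the $K^1_{\sigma-1}$ seminorm for first derivatives is $(\sigma-1)+1=\sigma$) is precisely the identification the paper uses implicitly.
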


\subsection{Estimates in $L^2_{\beta}$}
Now, we need to study the approximation error for \eqref{adjoint}. Taking $\beta>\frac{n}{4}-1$, $\varphi_\beta\in H^2$, and the standard Lagrange interpolator can be used. However, some effort is needed for obtaining estimates in the norm $W_{-\sigma}$, due to the negative weight.
We begin proving a local Poincar\'e inequality, deduced from \ref{T:impPoincare}:

\begin{lemma}[Local Poincar\'e inequality]\label{L:localPoincare}
  Let $T$ be such that ${\bf 0}$ is one of its vertices, and take $v\in H^1_{-\sigma}(T)$, for $\sigma$ satisfying \eqref{impPoincarecond} such that $\int_T v = 0$. Then:
 $$\|v\|_{L^2_{-\sigma}(T)}\le C h_T^{1-\sigma+\beta}\|\nabla v\|_{L^2_{-\beta}(T)}.$$
\end{lemma}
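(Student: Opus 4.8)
The plan is to reduce the statement to a single application of the improved Poincaré inequality of Theorem \ref{T:impPoincare}, run with a \emph{shifted} weight exponent, followed by an elementary pointwise comparison of the two weights on $T$. Since $\mathbf 0$ is the singular set (a point, $m=0$) and is a vertex of $T$, we have $r(x)=|x|$ on $T$, with $0\le r(x)\le \mathrm{diam}(T)=h_T$; this monotone control of $r$ by $h_T$ is what will produce the power $h_T^{1-\sigma+\beta}$.

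First I would set the auxiliary exponent $\tau=1-\sigma$, chosen so that the left-hand weight of the improved Poincaré matches the target: in \eqref{impPoincare} the left exponent is $\tau-1=-\sigma$ and the right exponent is $\tau=1-\sigma$. A short check shows $\tau=1-\sigma$ satisfies \eqref{impPoincarecond} precisely when $\sigma$ does, because the range $-\frac{n-m}{2}+1<\,\cdot\,<\frac{n-m}{2}$ is symmetric about $\tfrac12$ and $\tau\mapsto 1-\tau$ is its reflection. Applying Theorem \ref{T:impPoincare} on $T$ with exponent $\tau$, and using the hypothesis $\int_T v=0$ to kill the mean (so that the pointwise representation $|v(x)|\le C\,|I_1(\chi_T|\nabla v|)(x)|$ of the proof of Theorem \ref{T:impPoincare} is available), yields
\begin{equation*}
\|v\|_{L^2_{-\sigma}(T)}=\|v\|_{L^2_{\tau-1}(T)}\le C\,\|\nabla v\|_{L^2_{\tau}(T)}=C\,\|\nabla v\|_{L^2_{1-\sigma}(T)} .
\end{equation*}

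The main obstacle is that Theorem \ref{T:impPoincare} is stated globally on $\Omega$, whereas here I need it on the single convex element $T$ with singular set the vertex $\mathbf 0$. I would handle this by rerunning the proof of Theorem \ref{T:impPoincare} verbatim on $T$: since $T$ is convex and $v$ has mean value zero on $T$, the estimate $|v(x)|\le C\,I_1(\chi_T|\nabla v|)(x)$ holds for $x\in T$, and Theorem \ref{T:SW} applies with $\alpha=1$, $w=r^{2(\tau-1)}=r^{-2\sigma}$ and $v=r^{2\tau}=r^{2(1-\sigma)}$, whose admissibility is exactly the accompanying weight lemma (with $\sigma=\beta+\alpha$, here $1-\sigma=-\sigma+1$). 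No boundary condition on $\partial T$ is needed because the mean-value-zero form of the representation is used.

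Finally I would convert the $L^2_{1-\sigma}$ norm into the desired $L^2_{-\beta}$ norm. Writing $r^{2(1-\sigma)}=r^{2(1-\sigma+\beta)}\,r^{-2\beta}$ and noting that $1-\sigma+\beta\ge 0$ by the running assumption $\beta\ge\sigma-1$, the bound $r\le h_T$ on $T$ gives $r^{2(1-\sigma+\beta)}\le h_T^{2(1-\sigma+\beta)}$ pointwise, whence
\begin{equation*}
\|\nabla v\|_{L^2_{1-\sigma}(T)}^2=\int_T|\nabla v|^2 r^{2(1-\sigma)}\le h_T^{2(1-\sigma+\beta)}\int_T|\nabla v|^2 r^{-2\beta}=h_T^{2(1-\sigma+\beta)}\,\|\nabla v\|_{L^2_{-\beta}(T)}^2 .
\end{equation*}
Combining the two displays and taking square roots yields $\|v\|_{L^2_{-\sigma}(T)}\le C\,h_T^{1-\sigma+\beta}\|\nabla v\|_{L^2_{-\beta}(T)}$, as claimed. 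The only places where care is required are the symmetry check placing $\tau=1-\sigma$ in the admissible range, and the sign condition $1-\sigma+\beta\ge0$ that makes the monotonicity $r\le h_T$ work in the right direction.
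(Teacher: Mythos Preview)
Your proof is correct and follows essentially the same route as the paper's: both establish the intermediate bound $\|v\|_{L^2_{-\sigma}(T)}\lesssim\|\nabla v\|_{L^2_{1-\sigma}(T)}$ via the improved Poincar\'e inequality with shifted exponent $\tau=1-\sigma$, and then convert the right-hand weight using $r\le h_T$ together with $1-\sigma+\beta\ge 0$. The only difference is how the $T$-independence of the constant in the first step is secured: the paper maps $T$ affinely to a fixed reference element $\widehat{T}$ (using $r\sim h_T\widehat{r}$), applies Theorem~\ref{T:impPoincare} there, and transforms back, whereas you re-run the Sawyer--Wheeden argument directly on $T$. Both approaches ultimately rely on the shape regularity of $T$---in the paper's case for the equivalence $r\sim h_T\widehat{r}$ and $J_T\sim h_T^n$, in yours for the constant in the sub-representation $|v|\le C\,I_1(\chi_T|\nabla v|)$, which behaves like $\mathrm{diam}(T)^n/|T|$ on convex domains.
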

holds for every $\beta$ such that $\sigma\le \beta+1$, with a constant $C$ independent of $T$ and $v$.
\begin{proof}
  We denote $\wh{T}$ the reference element with vertices on $\{{\bf 0}\}$, and the canonical vectors $e^i$ such that $(e^i)_j = \delta_{i,j}$. Then we have an affine map $F_T: \wh{T}\to T$, $F_T(\wh{x})= x$.
  We define $\wh{v}(\wh{x}) = v(x)$.
  Also, we have that the distance $\wh{r}$ in $\wh{T}$ satisfies: $h_T\wh{r}(\wh{x}) \sim r(F_T(\wh{x}))$. Hence, we have:
  \begin{align*}
    \|v\|_{L^2_{-\sigma}(T)} = \left(\int_T v(x)^2 r(x)^{-2\sigma}dx \right)^\frac{1}{2}
    \sim h_T^{-\sigma}J_T^\frac{1}{2}\left(\int_{\wh{T}} \wh{v}(\wh{x}) \wh{r}(\wh{x})^{-2\sigma} d\wh{x}\right)^{\frac{1}{2}} = I
  \end{align*}
  where $J_T = |\det(DF_T)|.$ Now, we apply \eqref{impPoincare} on $\wh{T}$ and go back to $T$, taking into account that $\wh{\nabla}\wh{v}\sim h_T\nabla v$:
  \begin{align*}
    I &\lesssim h_T^{-\sigma}J^{\frac{1}{2}} \left(\int_{\wh{T}}|\wh{\nabla}\wt{v}|^2\wh{r}(\wh{x})^{2(1-\sigma)}d\wh{x}\right)^\frac{1}{2}
   \lesssim h_T^{-\sigma} \left(\int_T h_T^2|\nabla v|^2 h_T^{2(\sigma-1)}r(x)^{2(1-\sigma)} dx\right)^\frac{1}{2} \\
    &\lesssim \|\nabla v\|_{L^2_{1-\sigma}(T)} \lesssim h_T^{1+\beta-\sigma} \|\nabla v\|_{L^2_{-\beta}(T)}.
  \end{align*}
\end{proof}

As we commented earlier, since $\varphi_\beta\in H^2$, a standard Lagrange interpolator can be used. We take $\Pi_h(v)$ defined as in \eqref{Ihdef}, but with $a_i=v(x_i)$ for every $i$. The following result replicates Lemma \ref{L:cotasIh}:
\begin{lemma}\label{L:approxPih}
  The Lagrange interpolator $\Pi_h$ satisfies the following properties:
  \begin{equation}\label{Pihapprox}
  |u-\Pi_h v|_{H^\ell(T)}\le C h_T^{2-\ell}|v|_{H^2(T)},\quad \forall T\in \cT_h
  \end{equation}
  \begin{equation}\label{Pihstability}
  \|\Pi_h v\|_{W_{-\sigma(T)}}\lesssim h_T^{-1}\|v\|_{L^2_{-\sigma}(T)} + |v|_{H^1_{-\sigma}(T)} + h_T |v|_{H^2_{-\sigma}(T)}, \quad\forall T:\, T\cap\{{\bf 0}\}\nempty
  \end{equation}
  for every $v\in H^2_{-\sigma}$ and $\ell=0,1$.
\end{lemma}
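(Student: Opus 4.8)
The first inequality \eqref{Pihapprox} is the classical Lagrange interpolation estimate, and the weight plays no role in it. Since the interpolator $\Pi_h$ is applied only to $v=\varphi_\beta\in H^2(\Omega)$ and $n\le 3$, the nodal values $v(x_i)$ are well defined, so \eqref{Pihapprox} follows elementwise from the standard Bramble--Hilbert argument on each $T\in\cT_h$. The real content of the lemma is the weighted stability bound \eqref{Pihstability} on the elements $T$ having ${\bf 0}$ as a vertex, where $r^{-\sigma}$ is genuinely singular, so I would concentrate on that.

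To prove \eqref{Pihstability} the plan is to scale to the reference element $\hat T$, exactly as in the proof of Lemma \ref{L:localPoincare}. Writing $x=F_T(\hat x)$, $J_T=|\det DF_T|\sim h_T^n$, $r(F_T(\hat x))\sim h_T\hat r(\hat x)$ and $\hat\nabla\hat w\sim h_T\nabla w$, and using that interpolation commutes with the affine map (so $\widehat{\Pi_h v}=\hat\Pi\hat v$ with $\hat v=v\circ F_T$ and $\hat\Pi$ the Lagrange interpolator on $\hat T$), one checks that every weighted norm carries a common power of $h_T$:
\[
\|\nabla\Pi_h v\|_{L^2_{-\sigma}(T)}\sim h_T^{\frac{n}{2}-\sigma-1}\,|\hat\Pi\hat v|_{H^1_{-\sigma}(\hat T)}.
\]
The three terms on the right-hand side of \eqref{Pihstability} scale the same way: $h_T^{-1}\|v\|_{L^2_{-\sigma}(T)}$, $|v|_{H^1_{-\sigma}(T)}$ and $h_T|v|_{H^2_{-\sigma}(T)}$ are each $\sim h_T^{\frac{n}{2}-\sigma-1}$ times the corresponding reference seminorm of $\hat v$. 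Thus \eqref{Pihstability} is equivalent to the $h$-independent reference estimate
\[
|\hat\Pi\hat v|_{H^1_{-\sigma}(\hat T)}\lesssim \|\hat v\|_{H^2_{-\sigma}(\hat T)}.
\]

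For this reference estimate I would expand $\hat\Pi\hat v=\sum_i \hat v(\hat x_i)\hat\phi_i$, whence $|\hat\Pi\hat v|_{H^1_{-\sigma}(\hat T)}\le\sum_i|\hat v(\hat x_i)|\,|\hat\phi_i|_{H^1_{-\sigma}(\hat T)}$. Since \eqref{singularwelldefcond} forces $\sigma<\frac{n}{2}$ (for $m=0$), the weight $\hat r^{-2\sigma}$ is integrable on $\hat T$ and each $|\hat\phi_i|_{H^1_{-\sigma}(\hat T)}$ is a finite constant depending only on $\hat T$ and $\sigma$. It then remains to bound the nodal values. Here I would use that \eqref{singularwelldefcond} also gives $\sigma>\frac{n}{2}-1\ge 0$, so that on $\hat T$ one has $\hat r\le\diam(\hat T)$ and hence $\hat r^{-2\sigma}\gtrsim 1$; this yields $\|\hat v\|_{H^2(\hat T)}\lesssim\|\hat v\|_{H^2_{-\sigma}(\hat T)}$. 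For $n\le 3$ the Sobolev embedding $H^2(\hat T)\hookrightarrow C^0(\overline{\hat T})$ gives $|\hat v(\hat x_i)|\lesssim\|\hat v\|_{H^2(\hat T)}\lesssim\|\hat v\|_{H^2_{-\sigma}(\hat T)}$ for every node, including the one at the singular vertex. Combining these bounds proves the reference estimate, and scaling back gives \eqref{Pihstability}.

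I expect the main obstacle to be the nodal value at the singular vertex ${\bf 0}$: a priori it is unclear that point evaluation is controlled by a norm weighted by the singular factor $r^{-\sigma}$. The resolution is that in the point-Delta regime \eqref{singularwelldefcond} pins $\sigma$ into $(\frac{n}{2}-1,\frac{n}{2})$, so $\sigma>0$ makes the singular weight \emph{larger} than $1$ near the origin; the weighted $H^2$ norm then dominates the unweighted one and the classical embedding applies unchanged. The complementary bound $\sigma<\frac{n}{2}$ is exactly what keeps $\hat r^{-2\sigma}$ integrable, so both ends of the range \eqref{singularwelldefcond} are used. The remaining work — tracking the powers of $h_T$ through the affine scaling — is routine given Lemma \ref{L:localPoincare}.
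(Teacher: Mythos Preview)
Your argument is correct and follows essentially the same route as the paper. Both expand $\Pi_h v$ in the nodal basis, bound the nodal values via the Sobolev embedding $H^2\hookrightarrow L^\infty$ after passing to the reference element, bound the weighted gradient of each $\phi_i$ using the integrability of $r^{-2\sigma}$ (which needs $\sigma<\tfrac{n}{2}$), and then convert the unweighted $H^2$ control of $\hat v$ into the weighted one using $\sigma>0$; the only difference is organizational---you scale the whole inequality to $\hat T$ at once, while the paper tracks the $h_T$-powers by estimating the factors $|v(x_i)|$ and $\|\phi_i\|_{W_{-\sigma}(T)}$ separately on the physical element and then multiplying.
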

\begin{proof}
  \eqref{Pihapprox} is a well known result. On the other hand, for \eqref{Pihstability}, we have:
  \begin{align*}
    \|\Pi_h v\|_{W_{-\sigma}(T)} \le \sum_{i:x_i\in\bar{T}} \underbrace{|a_i|}_A \underbrace{\|\phi_i\|_{W_{-\sigma}(T)}}_B.
  \end{align*}
  $B$ is easily bounded using that $|\nabla \phi_i|\sim h_T^{-1}$, and $\int_T r(x)^{-2\sigma}dx \le |T|^{\frac{1}{2}}h_T^{-\sigma}$, giving: $B\le |T|^{\frac{1}{2}}h_T^{-1-\sigma}$.
  On the other hand, taking $\wh{T}$ the reference element and using the map $F_T:\wh{T}\to T$ as in Lemma \ref{L:localPoincare}, we have:
  \begin{align*}
  |a_i|&=|v(x_i)|\le \|v\|_{L^\infty(T_i)}=\|\hat{v}\|_{L^{\infty}(\hat{T})} \le C \|\hat{v}\|_{H^{2}(\hat{T})}
  = C\bigg(\sum_{j=0}^2|\hat{v}|^2_{H^j(\hat{T})}\bigg)^{\frac{1}{2}} \\
  &= C |T|^{-\frac{1}{2}}\bigg(\sum_{j=0}^2 h_{T}^{2j}|v|_{H^j(T)}^2\bigg)^{\frac{1}{2}}
  \le C |T|^{-\frac{1}{2}}\bigg(\sum_{j=0}^2 h_{T}^{2j+2\sigma}|v|_{H^j_{-\sigma}(T)}^2\bigg)^{\frac{1}{2}}
  \end{align*}
  and joining the estimates for $A$ and $B$ the result follows.
\end{proof}

In \eqref{Ihnear}, we took advantage of the fact that $u$ belongs to a Kondratiev type space, so $\nabla u\in L^2_{\sigma}$, but $u\in L^2_{\sigma-1}$.
That is not true for $\varphi_\beta$ and thence we obtained the term $h_T^{-1}\|v\|_{L^2_{-\sigma(T)}}$ in \eqref{Pihstability}. In order to compensate this, we use that $\Pi_h$ in invariant over polynomials of degree $1$.
Let us define $P_T(v)$ the polynomial of degree $1$ such that $\int_T D^\alpha(v(x)-P_T(v)(x)) dx = 0$, for every $|\alpha| \le 1$. The following result is a natural consequence of Lemma \ref{L:localPoincare}:

\begin{lemma}\label{P_Tapprox}
  For $T\in \cT_h$ such that $T\cap \{{\bf 0}\}\nempty$, taking $\sigma$ and $\beta$ as in Lemma \ref{L:localPoincare}, the following inequalities hold:
\begin{equation}\label{localPoincare1}
  \|\nabla(v-P_T(v))\|_{L^2_{-\sigma}(T)}\le C h_T^{1-\sigma+\beta} |v|_{H^2_{-\beta}(T)}
\end{equation}
\begin{equation}\label{localPoincare2}
\|v-P_T(v)\|_{L^2_{-\sigma}(T)}\le C h_T^{2-\sigma+\beta} |v|_{H^2_{-\beta}(T)}
\end{equation}
\end{lemma}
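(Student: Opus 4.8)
The plan is to obtain both estimates from the local weighted Poincar\'e inequality of Lemma \ref{L:localPoincare}, exploiting the two zero-mean properties built into the definition of $P_T(v)$. The point is that $P_T(v)$ is affine: its second derivatives vanish, so $D^\alpha(v-P_T(v))=D^\alpha v$ for $|\alpha|=2$, while by construction $v-P_T(v)$ has zero mean on $T$ and each first derivative $\partial_{x_j}(v-P_T(v))$ also has zero mean on $T$. These zero-mean conditions are precisely what Lemma \ref{L:localPoincare} requires.

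To prove \eqref{localPoincare1} I would apply Lemma \ref{L:localPoincare} componentwise. Fixing $j$ and using that $\partial_{x_j}(v-P_T(v))$ has zero mean, together with the identity $\nabla\partial_{x_j}(v-P_T(v))=\nabla\partial_{x_j}v$, gives
\[
\|\partial_{x_j}(v-P_T(v))\|_{L^2_{-\sigma}(T)}\le C\,h_T^{1-\sigma+\beta}\,\|\nabla\partial_{x_j}v\|_{L^2_{-\beta}(T)}.
\]
Squaring, summing over $j=1,\dots,n$, and noting that $\sum_j\|\nabla\partial_{x_j}v\|_{L^2_{-\beta}(T)}^2\sim|v|_{H^2_{-\beta}(T)}^2$ yields \eqref{localPoincare1} after taking square roots.

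For \eqref{localPoincare2} I would chain two Poincar\'e estimates. Since $v-P_T(v)$ has zero mean, Lemma \ref{L:localPoincare} first gives
\[
\|v-P_T(v)\|_{L^2_{-\sigma}(T)}\le C\,h_T^{1-\sigma+\beta}\,\|\nabla(v-P_T(v))\|_{L^2_{-\beta}(T)}.
\]
It then remains to show $\|\nabla(v-P_T(v))\|_{L^2_{-\beta}(T)}\le C\,h_T\,|v|_{H^2_{-\beta}(T)}$. Each component $\partial_{x_j}(v-P_T(v))$ has zero mean, so I would apply a Poincar\'e inequality with the \emph{same} weight $r^{-2\beta}$ on both sides; rescaling to the reference element $\widehat{T}$ exactly as in the proof of Lemma \ref{L:localPoincare}, now with $r^{-2\beta}$ replacing $r^{-2\sigma}$ and again using $\nabla\partial_{x_j}(v-P_T(v))=\nabla\partial_{x_j}v$, produces the single extra factor $h_T$. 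Combining the two displays gives the exponent $2-\sigma+\beta$ and hence \eqref{localPoincare2}.

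The main obstacle is this second, equal-weight step: the inequality $\|\partial_{x_j}(v-P_T(v))\|_{L^2_{-\beta}(T)}\le C\,h_T\,\|\nabla\partial_{x_j}v\|_{L^2_{-\beta}(T)}$ is not literally the statement of Lemma \ref{L:localPoincare}, which mixes the exponents $-\sigma$ and $-\beta$. To justify it I would invoke the standard weighted Poincar\'e inequality, valid because $r^{-2\beta}$ lies in the Muckenhoupt class $A_2$, or, to remain self-contained, repeat the scaling argument of Lemma \ref{L:localPoincare} at the level of the first derivatives. The care required is in checking the admissibility of the exponents, namely that the weight $r^{-2\beta}$ is indeed $A_2$ for the relevant range of $\beta$; the zero-mean property of $v-P_T(v)$ and of its first derivatives, which is exactly what the definition of $P_T(v)$ provides, is what makes each Poincar\'e step legitimate.
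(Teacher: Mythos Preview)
Your treatment of \eqref{localPoincare1} matches the paper's. For \eqref{localPoincare2} your chaining is correct but the two Poincar\'e steps are taken in the opposite order from the paper, and this is precisely what creates the ``main obstacle'' you flag. You first apply Lemma~\ref{L:localPoincare} with the pair $(\sigma,\beta)$ to land on $\|\nabla(v-P_T(v))\|_{L^2_{-\beta}(T)}$, and then need a same-weight Poincar\'e step with exponent $-\beta$, which forces you to verify that $r^{-2\beta}\in A_2$ (or that $\beta$ itself satisfies \eqref{impPoincarecond}). The paper instead first applies Lemma~\ref{L:localPoincare} with the parameter choice ``$\beta=\sigma$'', obtaining
\[
\|v-P_T(v)\|_{L^2_{-\sigma}(T)}\le C\,h_T\,\|\nabla(v-P_T(v))\|_{L^2_{-\sigma}(T)},
\]
and then feeds this directly into the already-established \eqref{localPoincare1}. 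In that order every weight-exponent hypothesis is placed on $\sigma$, which satisfies \eqref{impPoincarecond} by assumption, and no additional restriction on $\beta$ beyond $\sigma\le\beta+1$ is needed; the admissibility concern you raise simply does not arise.
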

\begin{proof}
  \eqref{localPoincare1} is given directly by Lemma \ref{L:localPoincare}.
  \eqref{localPoincare2} follows applying first Lemma \ref{L:localPoincare} with $\beta = \sigma$ and afterwards \eqref{localPoincare1}.
\end{proof}

Now, we are finally able to prove our error estimate in $L^2_{\beta}$ norm:
\begin{lemma}\label{L:adjointK1sigma}
 Let $\sigma$ satisfy \eqref{singularwelldefcond}, and $\beta$ such that $\sigma\le \beta+1$. Then, taking the grading parameter $\mu\le 1+\beta-\sigma$, we have:
 \begin{equation}
   \|\varphi_\beta-\Pi_h(\varphi_\beta)\|_{L^2_{-\sigma}(\Omega)}\le C h |\varphi_\beta|_{H^2_{-\beta}(\Omega)}.
 \end{equation}
\end{lemma}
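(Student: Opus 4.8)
The plan is to prove the estimate elementwise and then sum over $\cT_h$, exploiting the finite overlap of the patches so that $\sum_T|\varphi_\beta|_{H^2_{-\beta}(T)}^2\lesssim|\varphi_\beta|_{H^2_{-\beta}(\Omega)}^2$. Concretely I would establish on every element the bound $\|\varphi_\beta-\Pi_h\varphi_\beta\|_{L^2_{-\sigma}(T)}\lesssim h\,|\varphi_\beta|_{H^2_{-\beta}(T)}$, square it, and add up. The work splits along the three regimes of the grading rule, which amounts to treating $T\in\cT^{far}_h$ and $T\in\cT^{near}_h$ separately: the former with the standard Lagrange estimate \eqref{Pihapprox}, the latter with the $\cP_1$-invariance of $\Pi_h$ and the weighted local Poincar\'e estimates of Lemma \ref{P_Tapprox}.

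On far elements the weight $r$ is comparable to $\bar r_T\sim r_T$ throughout $T$, so \eqref{Pihapprox} with $\ell=0$, combined with the substitutions $\|\cdot\|_{L^2_{-\sigma}(T)}\sim r_T^{-\sigma}\|\cdot\|_{L^2(T)}$ and $|\varphi_\beta|_{H^2(T)}\sim r_T^{\beta}|\varphi_\beta|_{H^2_{-\beta}(T)}$, gives $\|\varphi_\beta-\Pi_h\varphi_\beta\|_{L^2_{-\sigma}(T)}\lesssim h_T^{2}r_T^{\beta-\sigma}|\varphi_\beta|_{H^2_{-\beta}(T)}$. For $d(T,\mathbf 0)>1$ one has $h_T\sim h$ and $r_T\sim 1$, so the bound is $\lesssim h^2\le h$. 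For $0<d(T,\mathbf 0)\le 1$ I would insert the grading $h_T\sim h\,r_T^{1-\mu}$ to obtain $\lesssim h^2\,r_T^{\beta-\sigma+2-2\mu}|\varphi_\beta|_{H^2_{-\beta}(T)}$; writing the prefactor as $h\cdot\bigl(h\,r_T^{\beta-\sigma+2-2\mu}\bigr)$, the parenthesis is bounded because, under $\mu\le 1+\beta-\sigma$, the exponent is either nonnegative (and $r_T\le 1$) or, at worst, is controlled at the innermost far layer where $r_T\sim h^{1/\mu}$. This yields $\lesssim h\,|\varphi_\beta|_{H^2_{-\beta}(T)}$, with room to spare.

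The delicate part, and the main obstacle, is the near region, where $h_T\sim h^{1/\mu}$ and a direct interpolation estimate is useless because $r^{-\sigma}$ is singular at $\mathbf 0$. Here I would use that $\Pi_h$ reproduces $\cP_1$, so writing $w:=\varphi_\beta-P_T\varphi_\beta$ one has $\varphi_\beta-\Pi_h\varphi_\beta=w-\Pi_h w$ on $T$. The term $\|w\|_{L^2_{-\sigma}(T)}$ is bounded directly by \eqref{localPoincare2} as $Ch_T^{2-\sigma+\beta}|\varphi_\beta|_{H^2_{-\beta}(T)}$. For $\|\Pi_h w\|_{L^2_{-\sigma}(T)}\le\sum_{x_i\in\bar T}|w(x_i)|\,\|\phi_i\|_{L^2_{-\sigma}(T)}$ I would estimate $\|\phi_i\|_{L^2_{-\sigma}(T)}\lesssim|T|^{1/2}h_T^{-\sigma}$ by the same computation as for the factor $B$ in the proof of Lemma \ref{L:approxPih} (now without the gradient), and control the nodal values $|w(x_i)|\le\|w\|_{L^\infty(T)}$ by passing to the reference element, using $\|\widehat w\|_{L^\infty(\widehat T)}\lesssim\|\widehat w\|_{H^2(\widehat T)}$, and bounding the scaled seminorms of $w$ through \eqref{localPoincare1}--\eqref{localPoincare2}; the powers of $h_T$ combine (the $n/2$'s cancel against $|T|^{1/2}$) to give again $Ch_T^{2-\sigma+\beta}|\varphi_\beta|_{H^2_{-\beta}(T)}$. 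Since $h_T\sim h^{1/\mu}$ and $\mu\le 1+\beta-\sigma\le 2+\beta-\sigma$, the exponent satisfies $(2-\sigma+\beta)/\mu\ge 1$, so $h_T^{2-\sigma+\beta}\le h$ (recall $h\le 1$), and the finitely many near elements contribute $\lesssim h\,|\varphi_\beta|_{H^2_{-\beta}(T)}$.

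The whole near-element argument hinges on replacing $\varphi_\beta$ by $\varphi_\beta-P_T\varphi_\beta$ and on Lemma \ref{P_Tapprox}, which is exactly where the hypotheses \eqref{impPoincarecond} (guaranteed here by \eqref{singularwelldefcond}, since $m=0$) and $\sigma\le\beta+1$ are consumed. I note in passing that for this $L^2_{-\sigma}$ statement the restriction $\mu\le 1+\beta-\sigma$ is in fact stronger than necessary (the bounds above would tolerate $\mu\le 2+\beta-\sigma$); it is the natural threshold for the companion gradient estimate, where the power in \eqref{localPoincare1} is one lower. Squaring the elementwise bounds and summing with bounded patch overlap then gives $\|\varphi_\beta-\Pi_h\varphi_\beta\|_{L^2_{-\sigma}(\Omega)}\lesssim h\,|\varphi_\beta|_{H^2_{-\beta}(\Omega)}$, as claimed.
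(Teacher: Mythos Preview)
Your proof is correct for the statement as literally written, and the strategy---elementwise estimation, the far/near split, and the $P_T$ trick on the singular element combined with Lemma~\ref{P_Tapprox}---is exactly the paper's. The one substantive difference is that the paper's own proof does not bound $\|\varphi_\beta-\Pi_h\varphi_\beta\|_{L^2_{-\sigma}}$ at all: it bounds $\|\nabla(\varphi_\beta-\Pi_h\varphi_\beta)\|_{L^2_{-\sigma}}$, i.e.\ the $W_{-\sigma}$ seminorm. On far elements the paper uses \eqref{Pihapprox} with $\ell=1$ to get $h_T\,r_T^{\beta-\sigma}$ (one power of $h_T$ less than you), whence $h\,r_T^{1-\mu+\beta-\sigma}$ and the exponent is nonnegative precisely under $\mu\le 1+\beta-\sigma$; on the near element the paper applies the stability bound \eqref{Pihstability} (which is stated for the $W_{-\sigma}$ norm) to $\varphi_\beta-P_T$ and then Lemma~\ref{P_Tapprox}, obtaining $h_T^{1+\beta-\sigma}$, again matched by $\mu\le 1+\beta-\sigma$.

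In other words, the displayed norm in the lemma is a typo: what is actually proved, and what Lemma~\ref{L:aubinnitsche} needs, is the $W_{-\sigma}$ estimate. Your closing remark that ``the restriction $\mu\le 1+\beta-\sigma$ is in fact stronger than necessary'' for the $L^2_{-\sigma}$ bound and ``is the natural threshold for the companion gradient estimate'' is exactly the diagnosis: you have correctly reverse-engineered the intended statement from the hypothesis. Your argument transposes verbatim to the gradient norm by using \eqref{Pihapprox} with $\ell=1$ on far elements and \eqref{Pihstability}/\eqref{localPoincare1} on the near element, after which the condition $\mu\le 1+\beta-\sigma$ becomes sharp in both regimes.
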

\begin{proof}
  We prove the result elementwise. For $T$ such that ${\bf 0}\notin \bar{T}$:
\begin{align*}
  \|\nabla(\varphi_\beta-\Pi_h(\varphi_\beta))\|_{L^2_{-\sigma}(T)} &\lesssim r_T^{-\sigma}\|\nabla(\varphi_\beta-\Pi_h(\varphi_\beta))\|_{L^2(T)}\lesssim h_Tr_T^{\beta-\sigma}|\varphi_\beta|_{H^2_{-\beta}(S_T)}\\
  &\lesssim h r_T^{1-\mu+\beta-\sigma}|\varphi_\beta|_{H^2_{-\beta}(S_T)} \lesssim h |\varphi_\beta|_{H^2_{-\beta}(S_T)}.
\end{align*}
Whereas, for $T$ with ${\bf 0}$ in one of its vertices, we interpose $P_T = P_T(\varphi_\beta)$:
\begin{align*}
  \|\nabla(\varphi_\beta-\Pi_h(\varphi_\beta))\|_{L^2_{-\sigma}(T)} &\le \|\nabla(\varphi_\beta-P_T)\|_{L^2_{-\sigma}(T)} + \|\nabla\Pi_h(\varphi_\beta-P_T)\|_{L^2_{-\sigma}(T)}.
\end{align*}
Now, the second term can is bounded by \eqref{Pihstability}:
$$|\Pi_h(\varphi_\beta-P_T)|_{H^1_{-\sigma}(T)} \le h_T^{-1}\|\varphi_\beta-P_T\|_{L^2_{-\sigma}(T)}+
|\varphi_\beta-p_T|_{H^1_{-\sigma}(T)}+h_T^{1+\beta-\sigma}|\varphi|_{H^2_{-\beta}(T)}.$$
In the last term we applied a slightly adapted version of \eqref{Pihstability}, taking norm $L^2_{-\beta}$ only for the second order derivatives. Now, applying Lemma \ref{P_Tapprox}, we have:
$$\|\nabla(\varphi_\beta-\Pi_h(\varphi_\beta)\|_{L^2_{-\sigma}(T)} \le C h_T^{1-\sigma+\beta} |\varphi_\beta|_{H^2_{-\beta}(S_T)},$$
and the result follows summing up over all $T\in\cT_h$.
\end{proof}

Joining Corollary \ref{C:pointK1sigma} and Lemma \ref{L:adjointK1sigma}, we obtain an optimal order of convergence in $L^2_{\beta}$.
Since the exponent $\eta$ on the right hand side of \eqref{pointK1sigma} should be taken $\eta>\frac{n}{2}-2$, the condition $\mu\le \sigma-1-\eta$ can be reduced to $\mu< \sigma+1-\frac{n}{2}$.
Hence, in order to be able to apply both results we need $\mu<\max\{\sigma+1-\frac{n}{2},1+\beta-\sigma\}$, for any $\sigma$ such that $\frac{n}{2}-1<\sigma<\frac{n}{2}$. Consequently it is enough to take $\mu< 1+\frac{\beta}{2}-\frac{n}{4}$:

\begin{thm}\label{T:L2beta}
  For any $\beta\geq \frac{n}{4}-1$, taking $\mu<1+\frac{\beta}{2}-\frac{n}{4}$, we have:
  $$\|u-u_h\|_{L^2_{\beta}(\Omega)} = O(h^2).$$
\end{thm}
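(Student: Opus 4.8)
The plan is to run a weighted duality (Aubin--Nitsche) argument, combining the three ingredients already prepared: the primal energy estimate of Corollary \ref{C:pointK1sigma}, the dual interpolation estimate of Lemma \ref{L:adjointK1sigma}, and the weighted elliptic regularity bound $\|\varphi_\beta\|_{H^2_{-\beta}(\Omega)}\lesssim\|u-u_h\|_{L^2_\beta(\Omega)}$ recorded after Lemma \ref{L:aubinnitsche}. All the genuinely analytic work — the interpolation estimates near and away from the singularity — has been front-loaded into those statements, so what remains is to assemble them and to reconcile the constraints they impose on the grading parameter $\mu$.

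First I would execute the duality step. Since $\beta>\frac{n}{4}-1$ forces $\varphi_\beta\in H^2$, the Lagrange interpolant $\Pi_h(\varphi_\beta)$ is an admissible competitor in the infimum, and the computation behind Lemma \ref{L:aubinnitsche} delivers
$$\|u-u_h\|_{L^2_\beta(\Omega)}^2\le \|u-u_h\|_{W_\sigma}\,\|\varphi_\beta-\Pi_h(\varphi_\beta)\|_{W_{-\sigma}}.$$
Next I would bound the two factors separately. For the primal factor, Corollary \ref{C:pointK1sigma} gives $\|u-u_h\|_{W_\sigma}\lesssim h\|u\|_{K^2_\eta(\Omega)}$ whenever $\mu\le\sigma-1-\eta$; because Theorem \ref{T:norms} only guarantees $u\in K^2_\eta$ for $\eta>\frac{n}{2}-2$, this reduces to the requirement $\mu<\sigma+1-\frac{n}{2}$. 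For the dual factor, Lemma \ref{L:adjointK1sigma} yields $\|\varphi_\beta-\Pi_h(\varphi_\beta)\|_{W_{-\sigma}}\le C h\,|\varphi_\beta|_{H^2_{-\beta}(\Omega)}$ provided $\mu\le 1+\beta-\sigma$, and the weighted regularity estimate converts $|\varphi_\beta|_{H^2_{-\beta}(\Omega)}$ into $\lesssim\|u-u_h\|_{L^2_\beta(\Omega)}$. Inserting both bounds gives
$$\|u-u_h\|_{L^2_\beta(\Omega)}^2\lesssim h^2\,\|u\|_{K^2_\eta(\Omega)}\,\|u-u_h\|_{L^2_\beta(\Omega)},$$
and dividing by $\|u-u_h\|_{L^2_\beta(\Omega)}$ (trivial when it vanishes) produces the claimed $O(h^2)$ rate.

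The one point that needs real care is the simultaneous fulfilment of the two constraints $\mu<\sigma+1-\frac{n}{2}$ and $\mu\le 1+\beta-\sigma$ for a single admissible exponent $\sigma$, i.e. one with $\frac{n}{2}-1<\sigma<\frac{n}{2}$ and $\sigma\le\beta+1$ so that both Corollary \ref{C:pointK1sigma} and Lemma \ref{L:adjointK1sigma} apply. The first bound increases and the second decreases in $\sigma$, so I would maximize their minimum by balancing the two thresholds at $\sigma=\frac{n}{4}+\frac{\beta}{2}$, where both equal $1+\frac{\beta}{2}-\frac{n}{4}$; any $\mu$ strictly below this common value then satisfies both. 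The remaining chore is purely bookkeeping: checking that this balancing $\sigma$ (or, when $\beta$ is large, a value taken just below $\frac{n}{2}$) lies in the admissible interval and obeys $\sigma\le\beta+1$, which follows from $n\le 4$ together with $\beta\ge\frac{n}{4}-1$. I expect this reconciliation of the $\mu$-conditions — rather than any new estimate — to be the main obstacle, since it is what pins down the precise threshold $1+\frac{\beta}{2}-\frac{n}{4}$ appearing in the statement.
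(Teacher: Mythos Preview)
Your proposal is correct and follows essentially the same route as the paper: apply the weighted Aubin--Nitsche Lemma, bound the primal factor via Corollary~\ref{C:pointK1sigma} (reducing its hypothesis to $\mu<\sigma+1-\tfrac{n}{2}$ by taking $\eta$ just above $\tfrac{n}{2}-2$), bound the dual factor via Lemma~\ref{L:adjointK1sigma} together with the regularity estimate $\|\varphi_\beta\|_{H^2_{-\beta}}\lesssim\|u-u_h\|_{L^2_\beta}$, and then balance the two $\mu$-constraints at $\sigma=\tfrac{n}{4}+\tfrac{\beta}{2}$ to obtain the threshold $\mu<1+\tfrac{\beta}{2}-\tfrac{n}{4}$. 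Your explicit check that this balancing $\sigma$ lies in the admissible range \eqref{singularwelldefcond} and satisfies $\sigma\le\beta+1$ is a detail the paper leaves implicit.
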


A particularly interesting result follows when taking $\beta=0$, leading to an estimate for the $L^2$ norm of the error. In $n=2$ the restriction on $\mu$ reads $\mu<\frac{1}{2}$.
In \cite{ABSV_delta} the authors propose a graded mesh with parameter $\mu=\frac{1}{2}$, and prove the suboptimal rate of convergence:
$$\|u-u_h\|_{L^2(\Omega)}\le C h^2|\log(h)|^{\frac{3}{2}}.$$
 A similar result is obtained in \cite{KW_delta}. Our numerical results are consistent with the ones exposed in \cite{ABSV_delta}, showing an order slightly worse than $2$ for $\mu=\frac{1}{2}$. However, taking $\mu<\frac{1}{2}$ the optimal order is recovered (see Table \ref{Tab:deltaR2} in Section \ref{S:experiments}).

\section{Anisotropic meshes for sources supported on segments}\label{S:segmentfem}

We treat extensively the three dimensional problem, considered in \cite{D_1d3d}. Here, we restrict ourselves to the case of $\Gamma$ being the segment \eqref{segment}. and study anisotropic graded meshes. Conclusions for isotropic meshes are derived from our calculations. Afterwards, we comment the two dimensional problem where some adjustments should be made.

 $\cT_h$ is now a graded anisotropic mesh. In $\Omega\setminus B(\Gamma,1)$, $\cT_h$ is formed by regular isotropic elemets of diameter $h$. In $B(\Gamma,1)$, on the contrary, $\cT_h$ should be graded. The idea is to grade $\cT_h$ isotropically in $B^{out}(\Gamma,1)$ and anisotropically in $C(\Gamma,1)$.
 However, according to Theorem \ref{T:anisotropic}, elements in $C(\Gamma,1)$ should be graded towards the extreme points of $\Gamma$. Hence, let us recall that $r_e(x)$ denotes the distance to the extreme points of $\Gamma$ and define
 $$B^{out}_{\tau}=\{x\in B(\Gamma,1):\, r_e(x)< \tau r(x)\},$$
 for some fixed constant $\tau>1$. In $B^{out}_{\tau}$ (where $r\sim r_e$), we define an isotropic graded mesh with:

$$h_T\sim \left\{\begin{array}{cl}
h^{\frac{1}{\mu}} & \textrm{if }\Gamma\cap\bar{T}\neq\emptyset, \\
hr_T^{1-\mu} & \textrm{if }0<r_T\le 1.
\end{array}\right.$$

 On the other hand, in $B(\Gamma,1)\setminus B^{out}_{\tau}$, we generate an anisotropic mesh.
 We recall some usual concepts:
 An element $T$ is of tensor-product type if it has one edge parallel to the $x_n$ axis, and a face ($n=3)$ or edge ($n=2$) parallel to the $x_1,x_2$ plane, or to the $x_1$ axis.
 We denote by $h_{T,j}$ ($j=1,\dots,n)$ the dimensions of the element $T$. For $n=2$, $h_{T,j}$ is the length of the edge parallel to the $x_j$ axis. For $n=3$, $h_{T,1}$ and $h_{T,2}$ are the base and the height of the face parallel to the plane $x_1$, $x_2$, and $h_{T,3}$ is the length of the edge parallel to the $x_3$ axis. $\h_T$ stands for the size vector $(h_{T,1},\dots,h_{T,n})$.
 We take $h_{T,1}\sim h_{T,2}< h_{T,3}$. We recall that $r_{T,e}$ stands for the distance from $T$ to the extreme points of $\Gamma$. We take:
 $$
 h_{T,j}\sim \left\{\begin{array}{cl}
h^{\frac{1}{\mu}} & \textrm{if } r_T = 0, \\
hr_T^{1-\mu} & \textrm{if }0<r_T\le 1,
\end{array}\right.\; (j<n);
\quad  h_{T,n}\sim \left\{\begin{array}{cl}
h^{\frac{1}{\mu}} & \textrm{if }r_{T,e} = 0, \\
h(r_{T,e})^{1-\mu} & \textrm{if }0<r_{T,e}\le 1.
\end{array}\right.$$

We proceed as in the case $m=0$, estimating first the discretizarion error of $u$ in $W_{\sigma}$, and afterwards, the interpolation error for $\varphi_\beta$ in $W_{-\sigma}$, leading to an estimate for the discretization error of $u$ in $L^2_{\beta}$. We study in detail the three dimensional problem.

\subsection{Estimate in $W_{\sigma}$ ($n=3$)}

As in the previous section, we need an interpolation operator that can be applied to functions in $K^2_{\sigma-1}$. However since we need to take into account the anisotropy of the mesh, we consider an adapted Scott-Zhang interpolator, instead of an adapted Lagrange one.

Let us recall that the interpolators of Scott-Zhang type take the form \eqref{Ihdef}
where $a_i=\mathbb{P}_{\xi_i}^k u(x_i)$. $\xi_i$ is certain non-empty set and $\mathbb{P}_{\xi_i}^k:L^2(\xi_i)\to \mathcal{P}_k(\xi_i)$ is the $L^2$ projection into the space of polynomials of degree $\le k$ on $\xi_i$.
The $S_h$ variant choose $\xi_i$ to be small edges or faces adjacent to the node $x_i$. There usually are many possible choices for $\xi_i$ fitting this criteria. Our interpolator is essentially $S_h$, but it is taken equal to zero at the segment $\Gamma$.
Specifically, we take $I_h$ of the form \eqref{Ihdef} with:
 $$a_i=0 \quad \textrm{for }x_i\in\mathcal{T}_h^{near},\quad\quad a_i = \mathbb{P}^1_{\xi_i} u (x_i) \textrm{ for }x_i\in\mathcal{T}_h^{far}.$$
 In the last case, we take $\xi_i = F$, where $F$ is a face (or edge) of the triangulation such that $F\subset\mathcal{T}_h^{far}$. In $B(\Gamma,1)\setminus B^{out}_{\tau}$, $\xi_i$ is taken parallel to the $x_1$-$x_2$ plane ($x_1$ axis). In other words, far from the singularity $I_h u = S_h u$, with a particular choice of small faces $\xi_i$.

We want to prove an analogue to Lemma \ref{L:cotasIh} for our modified Scott-Zhang interpolator. First, we recall a few useful facts, that we state with no proof. We refer the reader to \cite[Section 3]{A_Anisotropic} for details.
Let us observe that in every $T\in\cT_h^{far}$ the weight $r_T^{\sigma}$ is essentially constant, so the weighted space is equivalent to the unweighted one. Consequently, we have that $(\mathbb{P}_{\xi_i} u) (x_i) = \int_{\xi_i} u\psi_i,$
where $\psi_i\in \cP_1(\xi_i)$ is such that $\int_{\xi_i} \psi_i \phi_j = \delta_{i,j} \quad \forall i,j,$ and $\|\psi_i\|_\infty \sim |\xi_i|^{-1}$.

We also need the following trace theorem, that holds both for $n=2$ and $n=3$:
\begin{lemma}\label{L:tracesL1}
Let $\xi$ be an edge $(n=2)$ or face $(n=3)$ of an element $T$, $\ell\in\N$ and $p\geq 1$. Then, for every $v\in H^{\ell}(T)$ we have that $v$ has a trace in $e$ in the sense of $L^1$ and that:
$$\|v\|_{L^1(\xi)}\le C|\xi||T|^{-\frac{1}{p}}\sum_{|\alpha|\le \ell}\h^{\alpha}_T\|D^{\alpha} v\|_{L^p(T)},$$
where $|\xi|$ is the $(n-1)-$dimensional measure of $\xi$.
\end{lemma}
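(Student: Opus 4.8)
The plan is to reduce the estimate to a fixed reference configuration by an affine change of variables, and to track the anisotropic scaling one coordinate direction at a time. Since $T$ is of tensor-product type, there is a reference element $\wh T$ with $|\wh T|\sim 1$ and an affine map $F_T:\wh T\to T$, $x=F_T(\wh x)$, whose linear part is the diagonal matrix $\operatorname{diag}(h_{T,1},\dots,h_{T,n})$; I write $\wh v=v\circ F_T$ and $\wh\xi=F_T^{-1}(\xi)$. The existence of the $L^1$ trace of $v$ on $\xi$ will follow from that of $\wh v$ on $\wh\xi$, which is guaranteed by the classical trace theorem $W^{1,1}(\wh T)\hookrightarrow L^1(\partial\wh T)$ on the fixed Lipschitz domain $\wh T$.

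First I would record the three scaling relations produced by $F_T$. Writing $J_T=|\det DF_T|=\prod_j h_{T,j}\sim|T|$, the volume element transforms as $dx=J_T\,d\wh x$. Because the linear part of $F_T$ is diagonal, the chain rule gives $\partial_{\wh x_j}=h_{T,j}\,\partial_{x_j}$, so that $D^\alpha_{\wh x}\wh v=\h^\alpha_T\,(D^\alpha_x v)\circ F_T$ for every multiindex $\alpha$; combined with the change of variables this yields
$$\|D^\alpha\wh v\|_{L^p(\wh T)}=J_T^{-1/p}\,\h^\alpha_T\,\|D^\alpha v\|_{L^p(T)}\sim|T|^{-1/p}\,\h^\alpha_T\,\|D^\alpha v\|_{L^p(T)}.$$
Finally, on the face the $(n-1)$-dimensional surface measure scales by the constant factor $|\xi|/|\wh\xi|\sim|\xi|$, whence $\|v\|_{L^1(\xi)}\sim|\xi|\,\|\wh v\|_{L^1(\wh\xi)}$.

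Next I would invoke the trace inequality on the fixed element $\wh T$. Since $\ell\ge1$ and $\wh T$ is bounded, the continuous inclusions $W^{\ell,p}(\wh T)\hookrightarrow W^{1,1}(\wh T)\hookrightarrow L^1(\wh\xi)$ provide a constant $C=C(\wh T,\ell,p)$ with
$$\|\wh v\|_{L^1(\wh\xi)}\le C\sum_{|\alpha|\le\ell}\|D^\alpha\wh v\|_{L^p(\wh T)}.$$
Substituting the two scaling relations above into the left- and right-hand sides, and pulling the common factor $|T|^{-1/p}$ out of the sum, I obtain
$$\|v\|_{L^1(\xi)}\le C\,|\xi|\,|T|^{-1/p}\sum_{|\alpha|\le\ell}\h^\alpha_T\,\|D^\alpha v\|_{L^p(T)},$$
which is exactly the claimed bound, with a constant independent of $T$ because the reference configuration is fixed.

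The chain-rule and change-of-variables bookkeeping is routine; the step that genuinely requires care — and the reason the tensor-product hypothesis on $T$ is needed — is that the scaling must be performed direction by direction, so that a derivative in the $x_j$ direction contributes the factor $h_{T,j}$ rather than a single isotropic factor $h_T$. This is precisely what produces the anisotropic weight $\h^\alpha_T$ (instead of $h_T^{|\alpha|}$) in the estimate, and the argument would break down for a general anisotropic element whose reference map has a non-diagonal, shearing linear part.
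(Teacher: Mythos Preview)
Your proof is correct and follows exactly the approach the paper sketches in one sentence: pass to the reference element via the affine map $F_T$, apply the trace theorem there, and scale back, keeping track of the anisotropic factors $\h_T^\alpha$ direction by direction. Your explicit bookkeeping of the three scaling relations (volume, derivatives, surface measure) and your remark that the tensor-product structure is what makes the linear part of $F_T$ effectively diagonal are precisely the details the paper suppresses.
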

\begin{proof}
The result follows by changing variables to the reference edge/face $\hat{\xi}$ corresponding to $\xi$, in the reference element $\hat{T}$, applying the trace theorem there and going back to the original element $T$.
\end{proof}

Now we can prove the following analogue to Lemma \ref{L:cotasIh}.

\begin{lemma}\label{L:approxIh}
Let $I_h$ be the adapted Scott-Zhang operator. Then, for $T\in \cT_h^{far}$ we have:
\begin{equation}\label{approxIhanis}
|u-I_h u|_{H^1(T)} \le C \sum_{|\alpha| = 1} \h_T^{\alpha}|D^{\alpha}u|_{H^{1}(S_T)}.
\end{equation}
When $T\subset B^{out}_{\tau}$, $\h_T$ should be replaced by $h_T$. For $T\in\cT_h^{near}$, $T\subset B^{out}_{\tau}$:
\begin{equation}\label{stabilityIhanis}
|I_h u|_{K^1_{\sigma-1}(T)} \le C \Big\{\|u\|_{L^2_{\sigma-1}(S'_T)}+\sum_{j=1}^2 \|\partial_{x_j} u\|_{L^2_{\sigma}(S'_T)}+ h_{T,3} \|\partial_{x_3} u\|_{L^2_{\sigma-1}(S'_T)}\Big\}
\end{equation}
\end{lemma}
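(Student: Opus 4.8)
The plan is to prove the two estimates by the standard Scott-Zhang machinery, reducing everything to the reference element and exploiting the fact that on $\cT_h^{far}$ the weight $r^\sigma$ is essentially constant (so weighted and unweighted seminorms differ only by a factor $\bar r_T^\sigma\sim r_T^\sigma$). For the approximation estimate \eqref{approxIhanis} I would first note that $I_h$ reproduces affine functions on any patch $S_T$ that does not touch $\Gamma$: since $a_i=\mathbb P^1_{\xi_i}u(x_i)$ and $\mathbb P^1_{\xi_i}$ fixes polynomials of degree $\le 1$, for any $q\in\cP_1$ we have $I_h q=q$ on such $T$. Hence $u-I_h u=(u-q)-I_h(u-q)$ for every affine $q$, and it suffices to bound $|u-I_hu|_{H^1(T)}$ by the local stability of $I_h$ plus $|u-q|_{H^1(S_T)}$, then choose $q$ to be the averaged Taylor (or $L^2$-best) affine approximation of $u$ on $S_T$. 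The key anisotropic ingredient is that on a tensor-product element the best affine approximation satisfies the \emph{anisotropic} Bramble-Hilbert bound $|u-q|_{H^1(S_T)}\lesssim\sum_{|\alpha|=1}\h_T^\alpha|D^\alpha u|_{H^1(S_T)}$; this is exactly the estimate from \cite{A_Anisotropic} and accounts for the appearance of $\h_T^\alpha$ rather than $h_T$. For the local stability $|I_h(u-q)|_{H^1(T)}\lesssim\sum_{|\alpha|=1}\h_T^\alpha\|D^\alpha(u-q)\|_{\dots}$ I would pass to $\hat T$, use $\|\psi_i\|_\infty\sim|\xi_i|^{-1}$ to control the nodal values $a_i=\int_{\xi_i}(u-q)\psi_i$ via the trace Lemma \ref{L:tracesL1}, and scale back; the trace lemma is precisely what converts the $L^1(\xi_i)$ face-integral into volume derivative norms weighted by the correct $\h_T^\alpha$ factors. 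On $B^{out}_\tau$ the element is isotropic, so all $h_{T,j}\sim h_T$ and $\h_T^\alpha$ collapses to $h_T$, giving the stated replacement.

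For the stability estimate \eqref{stabilityIhanis} on near elements, recall that there $a_i=0$, so $I_h u$ is a sum of nodal basis functions only over the \emph{far} nodes of $T$, i.e. $I_h u=\sum_{x_i\in S_T'}a_i\phi_i$ with $a_i=\mathbb P^1_{\xi_i}u(x_i)=\int_{\xi_i}u\psi_i$ and $\xi_i\subset\cT_h^{far}$. I would estimate $|I_hu|_{K^1_{\sigma-1}(T)}$ by the triangle inequality over these basis functions, bounding $|\phi_i|_{K^1_{\sigma-1}(T)}$ by the (now nearly constant on $T$) weight times $|\phi_i|_{H^1(T)}\sim |T|^{1/2}h_T^{-1}$—but crucially one must keep the anisotropy, since $\nabla\phi_i$ has components of size $h_{T,j}^{-1}$. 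Then $|a_i|=|\int_{\xi_i}u\psi_i|$ is controlled by $\|\psi_i\|_\infty\|u\|_{L^1(\xi_i)}\sim|\xi_i|^{-1}\|u\|_{L^1(\xi_i)}$, and Lemma \ref{L:tracesL1} (with $p=2$, $\ell=1$) converts this into $|T|^{-1/2}\big(\|u\|_{L^2(S_T')}+\sum_j h_{T,j}\|\partial_{x_j}u\|_{L^2(S_T')}\big)$. Multiplying the $|a_i|$ and $|\phi_i|_{K^1}$ estimates, the factors $|T|^{\pm1/2}$ cancel and each term $h_{T,j}^{-1}\cdot h_{T,j}\|\partial_{x_j}u\|$ produces the clean derivative contributions; converting the unweighted $L^2$ norms to the weighted ones via $r_T^\sigma\sim\bar r_T^\sigma$ and tracking which derivatives carry $\sigma$ versus $\sigma-1$ yields the three displayed terms, with the tangential derivative $\partial_{x_3}u$ retaining the extra factor $h_{T,3}$ and the weaker weight $\sigma-1$ because no anisotropic gain is available in the $x_3$-direction on a near element abutting the segment.

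The main obstacle, and the step demanding the most care, is the anisotropic bookkeeping of the weights: one must verify that on each near element the substitution $r(x)^\sigma\sim\bar r_T^\sigma\sim r_T^\sigma$ is legitimate (which requires that $T\in\cT_h^{near}$ with $T\subset B^{out}_\tau$ stays at controlled distance-ratio from $\Gamma$, guaranteed by $r\sim r_e$ on $B^{out}_\tau$ and by the mesh-grading rule), and that the anisotropic trace estimate distributes the correct power $h_{T,j}$ onto each directional derivative so that the asymmetric weights $\sigma$ (transverse) and $\sigma-1$ (tangential, with the extra $h_{T,3}$) emerge exactly as claimed. Everything else is the routine scaling argument to $\hat T$, the affine-reproduction property of $\mathbb P^1_{\xi_i}$, and the anisotropic Bramble-Hilbert lemma, all of which may be imported from \cite{A_Anisotropic}.
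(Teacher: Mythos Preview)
Your proposal is correct and follows essentially the same route as the paper: for \eqref{approxIhanis} the paper simply invokes the fact that $I_h=S_h$ on $\cT_h^{far}$ and cites \cite{A_Anisotropic}, which is exactly the affine-reproduction plus anisotropic Bramble--Hilbert argument you outline; for \eqref{stabilityIhanis} the paper decomposes $|I_h u|_{K^1_{\sigma-1}(T)}\le\sum_i |a_i|\,|\phi_i|_{K^1_{\sigma-1}(T)}$, bounds $|a_i|$ by $|\xi_i|^{-1}\|u\|_{L^1(\xi_i)}$ and then Lemma~\ref{L:tracesL1}, and bounds $|\phi_i|_{K^1_{\sigma-1}(T)}$ directly, combining so that the $|T|^{\pm 1/2}$ and weight-power factors cancel. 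One small correction: on a near element $T$ the weight $r^\sigma$ is \emph{not} ``nearly constant'' (it vanishes on $\Gamma$); what is actually used is the one-sided bound $r^\sigma\le\bar r_T^\sigma$ (valid since $\sigma>0$), which suffices to control $|\phi_i|_{K^1_{\sigma-1}(T)}\le \bar r_T^\sigma|\phi_i|_{H^1(T)}$, while on the far neighbours $T_i\in S_T'$ one has the genuine equivalence $r\sim r_{T_i}\sim h_{T,1}$ that converts the unweighted norms from the trace lemma into the weighted ones on the right of \eqref{stabilityIhanis}.
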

\begin{proof}
The first inequality follows from the fact that $I_h$ is identical to $S_h$ on $\mathcal{T}_h^{far}$. See, for example \cite{A_Anisotropic}.
For \eqref{stabilityIhanis}, let us begin observing that if every node of $T$ is inside $\cT_h^{near}$, the left hand side vanishes, so there is nothing to prove. Hence, the result should be proven for every $T\in\cT_{near}$ such that there is some $T'\in\cT_h^{far}$ with $\bar{T}\cap\bar{T'}\nempty$. Let us denote $D_T = \{i:\; T_i\in\cT_h^{far},\, \bar{T}\cap\bar{T_i}\nempty\}$.
We have that:
\begin{align*}
|I_h u|_{K^1_{\sigma-1}(T)}\le \sum_{i\in D_T}\,\underbrace{\int_{\xi_i}|u\psi_i|}_{A}\,\underbrace{|\phi_i|_{K^1_{\sigma-1}(T)}}_{B}
\end{align*}
For $A$ we use that $\|\psi_i\|_{L^\infty(\xi_i)}\sim|\xi_i|^{-1}$, and apply Lemma \ref{L:tracesL1}:
\begin{align*}
A&\le C |\xi_i|^{-1}\|u\|_{L^1(\xi_i)} \le C|T_i|^{-\frac{1}{2}}\sum_{|\alpha|\le 1}\h_T^{\alpha}\|D^{\alpha} u\|_{L^2(T_i)},
\end{align*}
where $T_i$ is any element, $T_i\in\cT_h^{far}$ such that $\xi_i\subset \bar{T_i}$. We can continue:
\begin{align*}
\sum_{|\alpha|\le 1}\h_T^{\alpha} &\|D^{\alpha}u\|_{L^2(T_i)} \\
&\le r_{T_i}^{1-\sigma}\|u\|_{L^2_{\sigma-1}(T_i)}+\sum_{j=1}^2 h_{T_i,j} r_{T_i}^{-\sigma}\|\partial_{x_j} u\|_{L^2_{\sigma}(T_i)}+h_{T_i,3}
 r_{T_i}^{1-\sigma}\|\partial_{x_3} u\|_{L^2_{\sigma-1}(T_i)} \\
&\le h^{\frac{1-\sigma}{\mu}}\big\{\|u\|_{L^2_{\sigma-1}(T_i)}+\sum_{j=1}^2 \|\partial_{x_j} u\|_{L^2_{\sigma}(T_i)}+ h_{T_i,3} \|\partial_{x_3} u\|_{L^2_{\sigma-1}(T_i)}\big\}
\end{align*}

On the other hand, for $B$, considering each derivative of $\phi_i$, we have:

\begin{align*}
\|\partial_{x_j} \phi_i\|_{L^2_{\sigma}}\le C h_{T_i,j}^{-1} \bar{r}_{T_i}^{\sigma} |T_i|^{\frac{1}{2}} \le C h_{T_i,j}^{-1} h_{T,1}^{{\sigma}} |T_i|^{\frac{1}{2}}
\le C h_{T,1}^{\sigma-1}|T_i|^{\frac{1}{2}}\le C h^{\frac{\sigma-1}{\mu}}|T_i|^{\frac{1}{2}},
\end{align*}
where we used that $h_{T,j}\geq h_{T,1}$ for every $j$.
Finally, we can combine the estimations for $A$ and $B$, obtaining \eqref{stabilityIhanis}.
\end{proof}

We are now able to prove the approximation result for anisotropic meshes:

\begin{thm}\label{T:approxanis}
Let $\cT_h$ be a graded anisotropic mesh as defined previously, and $u_h$ the finite element solution of problem \eqref{weakproblem}. Then if $\mu<\sigma$, we have that:
\begin{equation}\label{anisoapprox}
\|u- u_h\|_{W_{\sigma}(\Omega)} = O(h).
\end{equation}
\end{thm}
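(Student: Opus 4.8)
The plan is to reduce the estimate to a bound on the interpolation error and then argue elementwise. Since $\|\cdot\|_{W_\sigma}$ coincides with the top--order $K^1_{\sigma-1}$ seminorm, the quasi--optimality \eqref{stability} gives $\|u-u_h\|_{W_\sigma}\lesssim |u-I_hu|_{K^1_{\sigma-1}(\Omega)}$, so it suffices to show the latter is $O(h)$ for the adapted Scott--Zhang operator $I_h$. I would split $\cT_h$ following the mesh construction: the isotropic elements with $r_T>1$, where $u\in H^2$ and the bound is classical; the isotropically graded region $B^{out}_\tau$; the anisotropic region $C(\Gamma,1)\setminus B^{out}_\tau$, which carries the essential difficulty; and the near layer $\cT_h^{near}$. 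On each far element I weight \eqref{approxIhanis} by $\bar r_T^{\sigma}$ to pass to the $K^1_{\sigma-1}(T)$ seminorm, and on near elements I use the stability bound \eqref{stabilityIhanis}.

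In the region $B^{out}_\tau$ (where $r\sim r_e$) the analysis is a verbatim repetition of the point--source argument of Theorem \ref{T:approxIh}: using $u\in K^2_\eta$ from Theorem \ref{T:norms} for any $\eta>-1$, the grading $h_T\sim hr_T^{1-\mu}$ produces the factor $r_T^{\sigma-1-\mu-\eta}\le 1$ exactly when $\mu\le\sigma-1-\eta$, and letting $\eta\to-1^+$ this is implied by $\mu<\sigma$. On the near elements, where $r_T=0$ forces $h_{T,j}\sim h^{1/\mu}$, I would bound $|u-I_hu|_{K^1_{\sigma-1}(T)}$ by $|u|_{K^1_{\sigma-1}(T)}$ plus the stability terms of \eqref{stabilityIhanis}, and convert the $\sigma$--weighted norms on the small patch $S'_T$ (where $r\sim h^{1/\mu}$) into $\eta$--weighted norms controlled by $u\in K^2_\eta$; this extracts positive powers of $h^{1/\mu}$, each of exponent at least one once $\mu\le\sigma-1-\eta$, so the contribution is again $O(h)$.

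The heart of the proof is the anisotropic region. There I weight \eqref{approxIhanis} by $r_T^{\sigma}$, expand the $H^1$ seminorms, and classify each second derivative $\partial_{x_k}\partial_{x_j}u$ by how many differentiations fall on the axial direction $x_3$. Pure cross--sectional derivatives ($j,k\in\{1,2\}$) are estimated by the generic regularity $u\in K^2_\eta$ of Theorem \ref{T:norms}; mixed derivatives $\partial_{x_l}\partial_{x_3}u$ ($l\in\{1,2\}$) by the anisotropic bound \eqref{anisotropicDu} with $|\alpha|=1$; and $\partial^2_{x_3}u$ by the sharper bound \eqref{anisotropicDupar}. Into these I substitute the anisotropic grading $h_{T,j}\sim hr_T^{1-\mu}$ for $j<3$ and $h_{T,3}\sim hr_{T,e}^{1-\mu}$. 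The cross--sectional contribution reproduces exactly the factor $r_T^{\sigma-1-\mu-\eta}\le 1$ of the isotropic case, and therefore needs $\mu<\sigma$. The axial contributions, in contrast, carry the endpoint weights $r_{T,e}^{-1}$ (from \eqref{anisotropicDu}) and $r_{T,e}^{-3}$ (from \eqref{anisotropicDupar}); these are compensated by the axial grading, which supplies $r_{T,e}^{2(1-\mu)}$, so that after summation, and using that $r_T^{\sigma-\eta}\le 1$ for a suitable $\eta$, one is reduced to the convergence near an extreme point (a point in $\R^3$) of $\int r_e^{1-2\mu}$ and $\int r_e^{-1-2\mu}$. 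The binding case is the latter, coming from $\partial^2_{x_3}u$, which converges iff $\mu<1$; since $\sigma<\frac{n-m}{2}=1$, the hypothesis $\mu<\sigma$ secures this as well.

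The step I expect to be most delicate is this last summation in the anisotropic region, and in particular the $\partial^2_{x_3}u$ term: it combines the largest mesh size $h_{T,3}\sim hr_{T,e}^{1-\mu}$ with the strongest endpoint degeneracy $r_{T,e}^{-3}$, so the grading toward the extreme points must be calibrated precisely to offset it. Making this rigorous requires turning the per--element estimates of Theorem \ref{T:anisotropic}, whose constants depend on $|S|$, into a convergent weighted integral, which I would do by invoking the finite overlap of the patches $S_T$ together with the comparability of $r$ and $r_e$ to $r_T$ and $r_{T,e}$ on each element (valid since $r_T,r_{T,e}\gtrsim h^{1/\mu}$ away from $\Gamma$, whence $h_{T,j}\lesssim r_T$ and $h_{T,3}\lesssim r_{T,e}$). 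A secondary technical point is the treatment of interface elements whose patch $S_T$ meets two of the regions, which I would absorb using the same bounded overlap and the mutual compatibility of the three regularity estimates.
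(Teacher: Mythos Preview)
Your proposal is correct and follows essentially the same route as the paper: reduce to interpolation error via \eqref{stability}, argue elementwise with the adapted Scott--Zhang operator, split into the regions $B^{out}_\tau$, $C(\Gamma,1)\setminus B^{out}_\tau$, and $\cT_h^{near}$, and classify second derivatives by the number of axial differentiations. The one place your bookkeeping differs from the paper is in the axial balance: the paper chooses the weight exponent $\eta$ so that $\mu\le\sigma-1-\eta$ absorbs the cross--sectional factor $\bar r_T^{\sigma-1-\eta-\mu}$ uniformly, leaving precisely $r_{T,e}$ (mixed) and $r_{T,e}^2$ (pure axial) in front of the norms, which match the $r_{S,e}^{1/2}$ and $r_{S,e}^{3/2}$ of Theorem~\ref{T:anisotropic} without any further convergence check; you instead take $\eta$ so that $r_T^{\sigma-\eta}\le1$, keep the full $r_{T,e}^{2(1-\mu)}$ from the axial grading, and close by integrability of $r_e^{-1-2\mu}$ near the extreme points, which yields the (weaker) side condition $\mu<1$ already implied by $\mu<\sigma<1$. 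Both balances are valid; the paper's is slightly cleaner because it avoids the explicit integral test, while yours makes the dependence on the axial grading parameter more transparent. One small gloss in your near--element discussion: for anisotropic $T\in\cT_h^{near}$ with $r_{T,e}>0$ only $h_{T,1},h_{T,2}\sim h^{1/\mu}$, whereas $h_{T,3}\sim h\,r_{T,e}^{1-\mu}$, so the axial term in \eqref{stabilityIhanis} still carries an $r_{T,e}$ factor---the paper handles this at the very end of the proof and you should too.
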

 \begin{proof}
 Thanks to \eqref{stability}, we only need to estimate $\|u-I_h u\|_{W_{\sigma}(\Omega)}$. As usual, we proceed element-wise. Let us take $T\in \cT_h^{far}$, and assume $T\subset B(\Gamma,1)\setminus B^{out}_{\tau}$. The case $T\subset B^{out}_{\tau}$ is easier, since no anosotropy should be considered.
 We use extensive that for $T\in \cT_{far}$, $\bar{r}_T\sim r_T$ and that $r_T\sim r_{T'}$ and $h_{T,j}\sim h_{T',j}$ for $j=1,2,3$ and any $T'\in S_T$. We denote $\alpha'$ a multiindex with $\alpha_n=0$, to denote derivatives with respect to the first variables:
 \begin{align*}
\|u &-I_h u\|_{W_{\sigma}(T)} \lesssim \bar{r}_T^{\sigma} \|u-I_h\|_{H^1(T)} \lesssim
    \bar{r}_T^{\sigma} \sum_{|\alpha|=1}\h^{\alpha}|D^{\alpha}u|_{H^1(S_T)}\\
&\lesssim \bar{r}_T^{\sigma}\bigg\{h_{T,1}\sum_{|\alpha'|=2}\|D^{\alpha'}u\|_{L^2(S_T)} +
   h_{T,3}  \sum_{|\alpha'|=1}\|\partial_{x_3}D^{\alpha'}u\|_{L^2(S_T)} +
    h_{T,3} \|\partial_{x_3}^2 u\|_{L^2(S_T)} \bigg\}\\
&\lesssim \bar{r}_T^{\sigma} \bigg\{h_{T,1}r_T^{-\eta-2} \sum_{|\alpha'|=2}
  \|D^{\alpha'} u\|_{L^2_{\eta+2}(S_T)}+h_{T,3}r_T^{-\eta-1} \sum_{|\alpha'|=1}\|\partial_{x_3}D^{\alpha'} u\|_{L^2_{\eta+1}(S_T)}\\
  &\quad\quad\quad +h_{T,3}r_T^{-\eta}\|\partial^2_{x_3} u\|_{L^2_{\eta}(S_T)}\bigg\} \\
&\lesssim h \bar{r}_T^{\sigma-1-\eta-\mu} \bigg\{\sum_{|\alpha'|=2} \|D^{\alpha'}
   u\|_{L^2_{\eta+2}(S_T)}+r_{T,e}\Big(\frac{r_T}{r_{T,e}}\Big)^\mu  \sum_{|\alpha'|=1}\|\partial_{x_3}D^{\alpha'} u\|_{L^2_{\eta+1}(S_T)}\\
   &\quad\quad\quad+r_{T,e}r_T \Big(\frac{r_T}{r_{T,e}}\Big)^\mu\|\partial^2_{x_3} u\|_{L^2_{\eta}(S_T)}\bigg\}
 \end{align*}
 Now, assuming $\mu\le\sigma-1-\eta$, and the fact that $r_T\le r_{T,e}$ we conclude:
 \begin{align*}
\quad&\lesssim h \bigg\{\sum_{|\alpha'|=2} \|D^{\alpha'} u\|_{L^2_{\eta+2}(S_T)}+r_{T,e} \sum_{|\alpha'|=1}\|\partial_{x_3}D^{\alpha'} u\|_{L^2_{\eta+1}(S_T)} +r_{T,e}^2\|\partial^2_{x_3} u\|_{L^2_{\eta}(S_T)}\bigg\},
 \end{align*}
  It is clear that the case $T\in\cT_h^{far}$, $T\subset B^{out}_{\tau}$ can be solved in the same way taking the diameter of $T$, $h_T$ instead of $h_{T,j}$ for every $j$, and increasing the weight on the norms of the derivatives with respect to $x_3$.
 Finally, let us consider $T\in\cT_h^{near}$. Again, the interesting case is given by the anisotropic elementes, $T\subset B(\Gamma,1)\setminus B^{out}_{\tau}$ and $r_{T,e}>0$. Using that $h_{T,1}\sim h^{\frac{1}{\mu}}$ and applying \eqref{stabilityIhanis} we have:
 \begin{align*}
 \|u&-I_h u\|_{W_{\sigma}(T)} \le |u|_{K^{1}_{\sigma-1}(T)}+|I_h u|_{K^{1}_{\sigma-1}(T)} \\
   &\lesssim \bar{r}_{T}^{\sigma-\eta-1}|u|_{K^1_{\eta}(T)}+
     \|u\|_{L^2_{\sigma-1}(S'_T)}+\sum_{j=1}^2 \|\partial_{x_j} u\|_{L^2_{\sigma}(S'_T)}+ h_{T,3} \|\partial_{x_3} u\|_{L^2_{\sigma-1}(S'_T)}\\
 &\lesssim \bar{r}_T^{\sigma-\eta-1}\Big\{|u|_{K^1_\eta(T)}+\|u\|_{L^2_{\eta}(S'_T)}+\sum_{j=1}^2 \|\partial_{x_j} u\|_{L^2_{\eta+1}(S'_T)}+ h r_{T,e}^{1-\mu} \|\partial_{x_3} u\|_{L^2_{\eta}(S'_T)}\Big\}
 \end{align*}
  But, $\bar{r}_{T}^{\sigma-\eta-1}\sim h^{\frac{\sigma-\eta-1}{\mu}}$. So assuming, once again, $\mu\le \sigma-\eta-1$, we have that $\bar{r}_T^{\sigma-\eta-1}\le h$.
The result follows summing up over all the elements, taking into account that the overlapping of the patches $S_T$ is finite, and that the factors given by powers of $r_{T,e}$ are enough to compensate the lack of weight in the norms of the derivatives with respect to $x_3$ (see Theorem \ref{T:anisotropic}).

The condition $\mu\le\sigma-\eta-1$ can be used for any $\eta>-1$ (Theorem \ref{T:norms}), so the condition on $\mu$ reduces to: $\mu<\sigma$.
Observe that the last estimation has a term $r_{T,e}^{1-\mu}\|\partial_{x_3}u\|_{L^2_\eta(S_T')}$. Theorem \ref{T:anisotropic} indicates that a factor $r_{T,e}^\frac{1}{2}$ is necessary, so we are induced to think that a condition $\mu\le\frac{1}{2}$ should be stated.
This is not true, though. Indeed, if we take $\mu>\frac{1}{2}$ we can write, in the left factor of the last inequality $\bar{r}_T^{\sigma-\eta-1}=\bar{r}_T^{\sigma-\eta-\mu-\frac{1}{2}}\bar{r}_T^{\mu-\frac{1}{2}}$.
Hence, we obtain in the last term: $hr_{T,e}^{\frac{1}{2}}(\bar{r}_T/r_{T,e})^{\mu-\frac{1}{2}}\le hr_{T,e}^\frac{1}{2}$,
so in order to preserve the order $1$ given by the $h$ factor, we just need $\sigma-\eta-\mu-\frac{1}{2}\geq 0$, which is equivalent to: $\mu \le \sigma-\eta-\frac{1}{2}$. Once again, since $\eta>-1$, this reduces to: $\mu\le \sigma+\frac{1}{2}$, which is true since $\mu\le\sigma$.
 \end{proof}

\begin{rmk}
  This result is consistent with the one proved in \cite{D_1d3d}, where isotropic graded meshes are considered, and a condition $\mu<\sigma$ is provided to guarantee an order $h$ for the $W_{\sigma}$ norm of the error.
\end{rmk}

\subsection{Estimate in $L^2_{\beta}$ ($n=3$)}
Here again we need to produce estimates for the $W_{-\sigma}$ norm of the error of the adjoint problem \eqref{adjoint}. We follow the ideas of Section \ref{S:pointfem}, though a little more technical problems arise due to the anisotropy of the mesh.

Since $\varphi_\beta\in H^2$, we can use the standard Scott-Zhang interpolator $S_h$. We need the following anisotropic version of the local Poincar\'e inequality given in Lemma \ref{L:localPoincare}:

\begin{lemma}[Local Anisotropic Poincar\'e ineaquality]\label{L:localanisoPoincare}
Consider $T$ such that $T\cap\Gamma\nempty$, (one of $T$'s edges lie on $\Gamma$), and take $v\in H^1_{-\sigma}$, for $\sigma$ satisfying \eqref{impPoincarecond}.  Then, if $\int_T v = 0$,  the inequality:
$$\|v\|_{L^2_{-\sigma}(T)}\le C h_{T,1}^{-\sigma}\sum_{j}h_{T,j}\|\partial_{x_j} v\|_{L^2(T)},$$
holds for every $\beta$ such that $\sigma\le \beta+1$, with a constant $C$ independent of $T$ and $v$.
\end{lemma}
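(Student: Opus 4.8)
The plan is to mimic the proof of the isotropic local Poincar\'e inequality (Lemma \ref{L:localPoincare}), but now using an anisotropic affine map to a reference element, so that each coordinate direction carries its own scale $h_{T,j}$. First I would set up the reference configuration: let $\wh{T}$ be a fixed reference element with one edge lying on $\Gamma$ (say, on the $x_n$-axis), and let $F_T:\wh{T}\to T$ be the diagonal affine map that rescales the $j$-th coordinate by $h_{T,j}$, so $x = F_T(\wh x)$ with $(DF_T)_{jj}\sim h_{T,j}$ and $J_T = |\det DF_T|\sim \prod_j h_{T,j}$. As in Lemma \ref{L:localPoincare}, I would record how the distance transforms: since $T$ touches $\Gamma$ and the anisotropy is in the transverse directions, the relevant scale for $r$ is the transverse size $h_{T,1}$, giving $r(F_T(\wh x))\sim h_{T,1}\,\wh r(\wh x)$ on $\wh T$ (here $\wh r$ is distance to the image of $\Gamma$ under $F_T^{-1}$). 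This is the crucial bookkeeping: the weight $r^{-2\sigma}$ will produce a factor $h_{T,1}^{-2\sigma}$, which is exactly the $h_{T,1}^{-\sigma}$ appearing in the statement.

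Next I would pull back the left-hand side to the reference element:
\begin{align*}
\|v\|_{L^2_{-\sigma}(T)}
= \left(\int_T v^2 r^{-2\sigma}\,dx\right)^{1/2}
\sim h_{T,1}^{-\sigma} J_T^{1/2}
\left(\int_{\wh T}\wh v^2\,\wh r^{-2\sigma}\,d\wh x\right)^{1/2}.
\end{align*}
On $\wh T$ the weight $\wh r^{-2\sigma}$ is an $A_2$ weight (by Lemma \ref{L:DLG_Ap}, using \eqref{impPoincarecond}, which forces \eqref{A2cond}), and $\wh v$ has mean value zero on $\wh T$ by hypothesis (the affine change of variables preserves the vanishing-mean condition). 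Hence I would apply the improved weighted Poincar\'e inequality \eqref{impPoincare} of Theorem \ref{T:impPoincare} on the fixed domain $\wh T$, with a constant depending only on $\wh T$ and $\sigma$, to bound the last factor by $\|\wh\nabla\wh v\|_{L^2_{1-\sigma}(\wh T)}$.

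Finally I would push forward to $T$. The chain rule gives $\wh\partial_{x_j}\wh v\sim h_{T,j}\,\partial_{x_j}v$, so each transverse derivative carries its own factor $h_{T,j}$; combining this with the Jacobian $J_T^{1/2}$ and the weight factor $h_{T,1}^{1-\sigma}$ coming from $\wh r^{1-\sigma}$, the volume factors cancel exactly and one is left with
$$\|v\|_{L^2_{-\sigma}(T)}\le C\,h_{T,1}^{-\sigma}\sum_j h_{T,j}\,\|\partial_{x_j}v\|_{L^2(T)},$$
as claimed. The main obstacle I anticipate is the transformation law for the distance function $r$ on an element that touches $\Gamma$: unlike the isotropic case, where a single diameter $h_T$ governs the scaling, here the map is non-uniform, and one must check that the transverse scale $h_{T,1}\sim h_{T,2}$ (and not the longer $h_{T,n}$) is the one controlling $r$, and that $\wh r$ on the reference element is genuinely comparable to distance-to-$\Gamma$ so that Theorem \ref{T:impPoincare} applies with a constant uniform in $T$. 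This requires the tensor-product structure of the mesh and the fact that $\Gamma$ is aligned with the $x_n$-axis; I would verify that the image of the edge of $T$ lying on $\Gamma$ maps to a fixed edge of $\wh T$ independent of $T$, so that $\wh r$ is a single fixed function and the Poincar\'e constant does not degenerate.
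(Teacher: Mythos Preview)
Your proposal is correct and matches the paper's own approach: the paper simply remarks that the argument is the same as in Lemma~\ref{L:localPoincare}, once one observes that under the anisotropic affine map the transformed distance satisfies $h_{T,1}\,\wh r(\wh x)\sim r(F_T(\wh x))$, which is precisely the key scaling you identified. Your careful discussion of why it is the transverse scale $h_{T,1}$ (and not $h_{T,n}$) that governs the weight, and of the need for a fixed reference configuration so that the Poincar\'e constant is uniform, makes explicit what the paper leaves implicit.
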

\begin{proof}
  The result follows in the same line than Lemma \ref{L:localPoincare}, taking into account that the transformed distance $\wh{r}$ satisfies $h_{T,1}\wh{r}(\wh{x})=r(F_T(\wh{x}))$.
\end{proof}

We also need a weighted stability result for $S_h$, in order to handle the negative exponent $-\sigma$ near $\Gamma$. The proof uses some technical tricks that are usual for this kind of interpolator.

\begin{lemma}
Let $T\in\cT_h$ such that one of its edges lie on the segment $\Gamma$, and take $v\in W_{-\sigma}\cap H^2$. We denote $\nabla'v$ the gradient of $v$ with respect the first variables (excluding $x_n$),. Analogously, $\nabla^{2'}$ stands for the derivatives of $v$ of order two, with respect to the first variables. Then for $j=1,2$:
\begin{equation}\label{stabilitySh1}
  \|\partial_{x_j}S_h v\|_{L^2_{-\sigma}(T)}\le \|\partial_{x_j}v\|_{L^2_{-\sigma}(T)}+h_{T,1}^{1-\sigma}\|\nabla^{2'}v\|_{L^2(T)} +
   h_{T,3}h_{T,1}^{-\sigma}\|\partial_{x_3}\nabla'v\|_{L^2(T)}
\end{equation}
And:
\begin{equation}\label{stabilitySh2}
  \|\partial_{x_3}S_h v\|_{L^2_{-\sigma}(T)}\le \|\partial_{x_3}v\|_{L^2_{-\sigma}}+h_{T,1}h_{T,3}^{-1}\|\nabla' v\|_{L^2_{-\sigma}(T)}.
\end{equation}
\end{lemma}
\begin{proof}
  We have:
  $$\|\partial_{x_j} S_h v\|_{L^2_{-\sigma}(T)} \le \sum_{i:x_i\in T}\underbrace{\bigg|\int_{\xi_i}v\psi_i\bigg|}_{A}\underbrace{\|\partial_{x_j}\phi_i\|_{L^2_{-\sigma}}}_{B}.$$
  $B$ is bounded as usual: $B\le h_{T,j}^{-1}|T|^\frac{1}{2}h_{T,1}^{-\sigma}$. For $A$, let us begin considering the simpler case $j=3$. Then, we can take a constant $\omega$ and observe that $\partial_{x_3}S_h v = \partial_{x_3}(S_h(v-\omega))$, which gives, applying Lemma \ref{L:tracesL1}:
  \begin{align*}
    A &\le |\xi_i|^{-1}\int_{\xi_i}|v-\omega| \le |T|^{-\frac{1}{2}}\sum_{|\alpha|\le 1}\vec{h}_T^{\alpha}\|D^{\alpha} (v-\omega)\|_{L^2(T)} \\
     &\le|T|^{-\frac{1}{2}}\big\{\|v-\omega\|_{L^2(T)}+h_{T,1}\|\nabla' v\|_{L^2(T)}+h_{T,3}\|\partial_{x_3}v\|_{L^2(T)}\big\}
  \end{align*}
  Taking $\omega = \frac{1}{|T|}\int_T v$, and applying Lemma \ref{L:localanisoPoincare}:
  \begin{align*}
    A&\le |T|^{-\frac{1}{2}}\big\{h_{T,1}\|\nabla' v\|_{L^2(T)}+h_{T,3}\|\partial_{x_3}v\|_{L^2(T)}\big\}
  \end{align*}
  \eqref{stabilitySh2} follows taking $\|\partial_{x_i}v\|_{L^2(T)}\le h_{T,1}^{\sigma}\|\partial_{x_i}v\|_{L^2_{-\sigma}}$ and multiplying $A\cdot B$.
  Observe that the same argument holds for any derivative in the isotropic elements of the mesh.

  For \eqref{stabilitySh1} we proceed in a similar way, taking $\omega=\omega(x_3)$ a polynomial of degree $1$. Moreover, since $T$ is an element of tensor product type we have that the faces $\xi_i$ that participate in the definition of $S_h$ on $T$ belong to two parallel planes, orthogonals to the $x_3$ axis.
  Hence, we can take two sets $\bar{\xi}_1$ and $\bar{\xi}_2$ such that $\xi_i\subset \bar{\xi}_k$  and $|\xi_i|\sim|\bar{\xi}_k|$, for some $k=1,2$  and for every $j$ such that $x_j\in \bar{T}$.
  Taking $\omega$ such that $\omega|_{\bar{\xi}_k} = |\bar{\xi}_k|^{-1}\int_{\bar{\xi}_k} v$,   applying the classical Poincar\'e inequality in $L^1(\bar{\xi}_k)$ (see \cite[Theorem 3.2]{AD_Poincare}), and taking into account that diam$(\bar{\xi}_k)\sim h_{T,1}$:
  \begin{align*}
    A &\le |\xi_i|^{-1}\int_{\xi_i}(v-\omega)\le |\xi_i|^{-1}\int_{\bar{\xi}_k}(v-\omega) \le |\xi_i|^{-1} h_{T,1}\|\nabla' v\|_{L^1(\bar{\xi}_k)}.
  \end{align*}
  Applying Lemma \ref{L:tracesL1}:
  \begin{align*}
    A&\le |\xi_i|^{-1}h_{T,1}|\bar{\xi}_k||T|^{-\frac{1}{2}}\sum_{|\alpha|\le 1} \vec{h_T}^{\alpha}\|D^{\alpha}\nabla' v\|_{L^2(T)}\\
    &\le h_{T,1}|T|^{-\frac{1}{2}} \big\{ \|\nabla' v\|_{L^2(T)}+h_{T,1}\|\nabla^{2'}v\|_{L^2(T)}+h_{T,3}\|\partial_{x_3}\nabla' v\|_{L^2(T)} \big\},
  \end{align*}
  and \eqref{stabilitySh1} follows taking $\|\nabla' v\|_{L^2(T)}\le h_{T,1}^{\sigma} \|\nabla' v\|_{L^2_{-\sigma}(T)}$ and multiplying $A\cdot B$.
\end{proof}

Finally, we are able to prove the approximation result:

\begin{thm}\label{T:approxanisadjoint}
Given $\varphi\in W_{-\sigma}\cap H^2_{-\beta}(\Omega)$ with $\beta\geq\sigma$:
$$\|\varphi-S_h\varphi\|_{W_{-\sigma}} \lesssim h |\varphi|_{H^2_{-\beta}},$$
for every graduation parameter $\mu$.
\end{thm}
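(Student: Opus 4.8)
The plan is to transcribe the architecture of Lemma~\ref{L:adjointK1sigma} from the point--source case into the anisotropic setting, replacing its three ingredients by their anisotropic counterparts: the standard anisotropic Scott--Zhang estimate of the type \eqref{approxIhanis} far from $\Gamma$, the local anisotropic Poincar\'e inequality of Lemma~\ref{L:localanisoPoincare}, and the weighted stability bounds \eqref{stabilitySh1}--\eqref{stabilitySh2} near $\Gamma$. As always I would argue element-wise, controlling $\|\nabla(\varphi-S_h\varphi)\|_{L^2_{-\sigma}(T)}$ on each $T$ and then summing, using the finite overlap of the patches $S_T$. The two regimes are $T\in\cT_h^{far}$, where $r^{-\sigma}$ is harmless, and the elements with an edge on $\Gamma$, where $r^{-\sigma}$ is genuinely singular.

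For $T\in\cT_h^{far}$ the weight $r^{-\sigma}$ is comparable to the constant $r_T^{-\sigma}$, so the weighted seminorm is equivalent to the unweighted one up to this factor; I would apply the anisotropic Scott--Zhang estimate to $\nabla\varphi$, convert the resulting second-order norms back to $L^2_{-\beta}$ via $r\sim r_T$ on $S_T$ (a factor $r_T^{\beta}$, legitimate since $\beta\ge\sigma>0$), and insert the grading rule $h_{T,j}\sim h r_T^{1-\mu}$ for $j<n$ and $h_{T,n}\sim h\,r_{T,e}^{1-\mu}$. The transverse terms then carry a factor $r_T^{1+\beta-\sigma-\mu}$ and the longitudinal one a factor $r_T^{\beta-\sigma}r_{T,e}^{1-\mu}$; since $\beta\ge\sigma$ and $r_T\le r_{T,e}\le 1$, both are bounded and only the single power of $h$ survives. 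This is the first place where $\beta\ge\sigma$ does its work and where one sees that no lower bound on $\mu$ is needed.

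For the elements touching $\Gamma$ I would interpose the degree-one polynomial $P_T$ defined by $\int_T D^\alpha(\varphi-P_T)=0$ for $|\alpha|\le 1$ and use the invariance $S_hP_T=P_T$ to split $\varphi-S_h\varphi=(\varphi-P_T)-S_h(\varphi-P_T)$. The first piece is handled component-wise by Lemma~\ref{L:localanisoPoincare} (each $\partial_{x_i}(\varphi-P_T)$ has zero mean), which produces $h_{T,1}^{-\sigma}\sum_j h_{T,j}\|\partial_{x_j}\partial_{x_i}\varphi\|_{L^2(T)}$; converting $L^2$ to $L^2_{-\beta}$ costs $h_{T,1}^{\beta}$ because $r\lesssim h_{T,1}$ on $T$. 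The second piece is dispatched by \eqref{stabilitySh1}--\eqref{stabilitySh2} applied to $v=\varphi-P_T$: their leading terms $\|\partial_{x_j}(\varphi-P_T)\|_{L^2_{-\sigma}}$ and $\|\nabla'(\varphi-P_T)\|_{L^2_{-\sigma}}$ are again bounded by Lemma~\ref{L:localanisoPoincare}, while the remaining terms already display the pure second derivatives $\nabla^{2'}\varphi$ and $\partial_{x_3}\nabla'\varphi$ of $\varphi$ (the polynomial part having dropped out). Substituting $h_{T,1}\sim h^{1/\mu}$ and $h_{T,3}\sim h\,r_{T,e}^{1-\mu}$, every contribution collects either $(h^{1/\mu})^{1+\beta-\sigma}$ or $h\,(r_{T,e})^{1-\mu}(h^{1/\mu})^{\beta-\sigma}$, each $\lesssim h$ since $1+\beta-\sigma\ge 1$ and $\beta-\sigma\ge 0$; this is precisely the assertion that the bound holds for every $\mu$.

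I expect the main obstacle to be bookkeeping rather than any single inequality: one must carry the two distinct scales $h_{T,1}\sim h_{T,2}$ and $h_{T,3}$ together with the two weights $r_T$ and $r_{T,e}$ through the stability bounds, and verify in each term that the small transverse scale $h^{1/\mu}$ enters with a high enough power that refining (decreasing $\mu$) only helps. The most delicate term is the transverse second derivative that reappears through the longitudinal stability estimate \eqref{stabilitySh2}, namely $h_{T,1}h_{T,3}^{-1}\|\nabla'(\varphi-P_T)\|_{L^2_{-\sigma}}$, where $h_{T,3}^{-1}$ reintroduces $r_{T,e}^{\mu-1}$; here I would use $r_T\le r_{T,e}$ together with the scale relation $r_{T,e}\gtrsim h_{T,3}$ and the slack $\beta-\sigma\ge 0$ to absorb the negative power of $r_{T,e}$, checking the extreme cases $r_{T,e}\sim h^{1/\mu}$ and $r_{T,e}\sim 1$ both reduce to $\mu\le 1+\beta-\sigma$, which never binds for a genuine grading. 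The isotropic near-$\Gamma$ elements in $B^{out}_\tau$ and the far elements in $B^{out}_\tau$ follow from the same computations by setting $h_{T,3}\sim h_{T,1}$ and dropping the anisotropy, after which summation over $\cT_h$ completes the proof.
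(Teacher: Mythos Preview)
Your proposal is correct and follows essentially the same route as the paper: the anisotropic Scott--Zhang estimate on $\cT_h^{far}$, the interposition of the averaging polynomial $P_T$ on elements touching $\Gamma$, and the combination of Lemma~\ref{L:localanisoPoincare} with the stability bounds \eqref{stabilitySh1}--\eqref{stabilitySh2}. The only cosmetic difference is your handling of the cross term $h_{T,1}h_{T,3}^{-1}\|\nabla'(\varphi-P_T)\|_{L^2_{-\sigma}}$: the paper simply applies Lemma~\ref{L:localanisoPoincare} and observes that the resulting $h_{T,3}$ from the longitudinal part cancels $h_{T,3}^{-1}$, while the transverse part carries $h_{T,1}^2 h_{T,3}^{-1}\le h_{T,1}$ thanks to $h_{T,1}\le h_{T,3}$, so no $r_{T,e}$ tracking is needed there.
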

\begin{proof}
  For an element $T$ such that $\bar{T}\cap\Gamma = \emptyset$, the result follows applying the approximation property of $S_h$ (see \eqref{approxIhanis}):
  \begin{align*}
    |\varphi &-S_h \varphi|_{H^1_{-\sigma}(T)} = r_T^{-\sigma}|\varphi-S_h|_{H^1(T)}
    \lesssim r_T^{-\sigma}\sum_{|\alpha|=1}\vec{h}_T^{\alpha}|D^{\alpha}\varphi|_{H^1(S_T)}
  \end{align*}
  \vspace{-0.5cm}
  \begin{equation}\label{badequation}
    \le h r_T^{1-\mu+\beta-\sigma}\sum_{|\alpha'|=1} |D^{\alpha'}\varphi|_{H^1_{-\beta}(S_T)}+h r_T^{\beta-\sigma}r_{T,e}^{1-\mu}|\partial_{x_3}\varphi|_{H^1_{-\beta}(S_T)}
  \end{equation}
  So to obtain an estimate $O(h)$, we need $\beta\geq\sigma$.

  For $T$ such that one of its edges lie on $T$ we begin interposing the polynomial $P_T=P_T(\varphi)$ such that $\int D^{\alpha}(\varphi-P_T) = 0$ for $|\alpha|\le 1$:
  \begin{align*}
    \|\nabla (\varphi - S_h\varphi)\|_{L^2_{-\sigma}(T)}\le \|\nabla(\varphi-P_T)\|_{L^2_{-\sigma}(T)}+\|\nabla S_h (\varphi-P_T)\|_{L^2_{-\sigma}(T)} = I + II.
  \end{align*}
  As we will soon see, it is enough to estimate $II$. We separate the estimation in two cases, depending on the devative considered. Applying \eqref{stabilitySh1}, we have:
  \begin{align*}
    \|\partial_{x_1} S_h (\varphi-P_T)\|_{L^2_{-\sigma}(T)}&\le
     \|\partial_{x_1}(\varphi-P_T)\|_{L^2_{-\sigma}(T)}+ h_{T,1}^{1-\sigma}\|\nabla' \varphi\|_{L^2(T)} \\
     &+ h_{T,3}h_{T,1}^{-\sigma}\|\partial_{x_3}\nabla'\varphi\|_{L^2(T)}.
  \end{align*}
  The first term on the right is a part of $I$, and can be estimated using Lemma \ref{L:localanisoPoincare} given exactly the rest of the right member. So we have:
  \begin{align*}
    \|\partial_{x_1} S_h (\varphi-P_T)\|_{L^2_{-\sigma}(T)}&\lesssim
      h_{T,1}^{1-\sigma}\|\nabla' \varphi\|_{L^2(T)} + h_{T,3}h_{T,1}^{-\sigma}\|\partial_{x_3}\nabla'\varphi\|_{L^2(T)}\\
      &\lesssim
      h_{T,1}^{1+\beta-\sigma}\|\nabla'\varphi\|_{L^2_{-\beta}(T)}
      + h_{T,3}h_{T,1}^{\beta-\sigma}\|\partial_{x_3}\nabla'\varphi\|_{L_{-\beta}^2(T)}
  \end{align*}
 The same holds for $\partial_{x_2}(S_h(\varphi-P_T))$. On the other hand,
 \begin{align*}
   \|\partial_{x_3} S_h (\varphi-P_T)\|_{L^2_{-\sigma}(T)}&\le
    \|\partial_{x_3}(\varphi-P_T)\|_{L^2_{-\sigma}(T)}+ h_{T,1}h_{T,3}^{-1}\|\nabla' (\varphi-P_T)\|_{L^2_{-\sigma}(T)}
 \end{align*}
 Again, the first term on the right hand side is part of $I$. Both terms can bounded applying Lemma \ref{L:localanisoPoincare}. We continue:
  \begin{align*}
   \|\partial_{x_3} S_h (\varphi &-P_T)\|_{L^2_{-\sigma}(T)}  \le h_{T,1}h_{T,3}^{-1}\Big(h_{T,1}^{1-\sigma}\|\nabla' \varphi\|_{L^2(T)}+h_{T,1}^{-\sigma}+h_{T,3}\|\nabla'\partial_{x_3}\varphi\|_{L^2(T)}\Big) \\
    &\hspace{3cm} +h_{T,1}^{1-\sigma}\|\nabla'\partial_{x_3}\varphi\|_{L^2(T)}+h_{T,1}^{-\sigma}h_{T,3}\|\partial_{x_3}^2\varphi\|_{L^2(T)}\\
    &\le h_{T,1}^{1+\beta-\sigma} \Big( \|\nabla'\partial_{x_3}\varphi\|_{L^2_{-\beta}(T)} +  \|\nabla'\partial_{x_3}\varphi\|_{L^2_{\beta}(T)} \Big) + h_{T,3}h_{T,1}^{\beta-\sigma}\|\partial_{x_3}^2 \varphi\|_{L^2_{-\beta}(T)}
  \end{align*}
  In order to obtain an estimate $O(h)$ we need $\mu\le \beta+1-\sigma$ which is always true since, $\beta\geq\sigma$.
\end{proof}

Joining Theorem \ref{T:approxanis} and Theorem \ref{T:approxanisadjoint}, we prove:
\begin{coro}[Estimate in $L^2_{\beta}$]
  Taking $\mu<\beta$ and $\beta>0$, we have that:
  $$\|u-u_h\|_{L^2_{\beta}}=O(h^2).$$
 \end{coro}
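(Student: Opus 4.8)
The plan is to derive the $O(h^2)$ rate in $L^2_\beta$ by the weighted duality (Aubin--Nitsche) argument of Lemma~\ref{L:aubinnitsche}, feeding in the two approximation results already in hand: the primal estimate of Theorem~\ref{T:approxanis} for $u$ in $W_\sigma$, and the dual estimate of Theorem~\ref{T:approxanisadjoint} for the adjoint solution $\varphi_\beta$ in $W_{-\sigma}$. Since the dual estimate holds for \emph{every} grading parameter, the only genuine restriction on $\mu$ comes from the primal side. Given $\mu<\beta$ and $\beta>0$, I would first fix an auxiliary exponent $\sigma$ lying in the admissible window \eqref{singularwelldefcond} and satisfying $\mu<\sigma\le\beta$; this is exactly where $\mu<\beta$ is used (pick $\sigma$ strictly between $\mu$ and $\min\{\beta,\frac{n-m}{2}\}$). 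With such a $\sigma$ fixed, $u\in W_\sigma$, the hypothesis $\mu<\sigma$ of Theorem~\ref{T:approxanis} holds, and the hypothesis $\beta\geq\sigma$ of Theorem~\ref{T:approxanisadjoint} holds; moreover $\sigma\le\beta+1$ (needed for the duality) follows a fortiori from $\sigma\le\beta$.

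The core of the argument is a short chain of inequalities. In the squared form that its proof actually establishes, the duality of Lemma~\ref{L:aubinnitsche} gives
$$\|u-u_h\|_{L^2_\beta(\Omega)}^2 \le \|u-u_h\|_{W_\sigma}\,\inf_{\varphi_h\in W_h}\|\varphi_\beta-\varphi_h\|_{W_{-\sigma}},$$
where $\varphi_\beta$ solves the adjoint problem \eqref{adjoint}. I would bound the first factor directly by Theorem~\ref{T:approxanis}, which yields $\|u-u_h\|_{W_\sigma}\lesssim h$. For the infimum I take $\varphi_h=S_h\varphi_\beta$ and apply Theorem~\ref{T:approxanisadjoint} (legitimate since $\varphi_\beta\in W_{-\sigma}\cap H^2_{-\beta}$), obtaining $\inf_{\varphi_h}\|\varphi_\beta-\varphi_h\|_{W_{-\sigma}}\lesssim h\,|\varphi_\beta|_{H^2_{-\beta}}$. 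Finally the weighted elliptic regularity recalled after Lemma~\ref{L:aubinnitsche} (from \cite{DST_ApPoisson}) controls the adjoint solution by its own data, $|\varphi_\beta|_{H^2_{-\beta}}\le\|\varphi_\beta\|_{H^2_{-\beta}}\lesssim\|u-u_h\|_{L^2_\beta}$, because $g_\beta$ was built so that $\|g_\beta\|_{L^2_{-\beta}}=\|u-u_h\|_{L^2_\beta}$. Combining the three bounds gives
$$\|u-u_h\|_{L^2_\beta}^2 \lesssim h\cdot h\,\|u-u_h\|_{L^2_\beta} = h^2\,\|u-u_h\|_{L^2_\beta},$$
and dividing by $\|u-u_h\|_{L^2_\beta}$ (trivial otherwise) yields $\|u-u_h\|_{L^2_\beta}=O(h^2)$.

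I do not anticipate a serious obstacle: the two difficult approximation theorems have already absorbed all the anisotropic and weighted technicalities, so the corollary is essentially a bookkeeping assembly of duality, the primal rate, the dual rate, and the self-improving regularity step. The only point that genuinely demands care is the simultaneous feasibility of $\mu<\sigma$ and $\sigma\le\beta$ inside the window \eqref{singularwelldefcond}, which is what forces the hypothesis $\mu<\beta$ (together with the standard restriction $\mu<\frac{n-m}{2}$ coming from the range of $\sigma$). The final cancellation is legitimate precisely because the left-hand side of the duality estimate is \emph{quadratic} in the error while the adjoint data is chosen to reproduce the $L^2_\beta$ error norm, so one power of $\|u-u_h\|_{L^2_\beta}$ is recovered on the right and absorbed.
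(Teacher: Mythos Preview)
Your proposal is correct and follows essentially the same approach as the paper: apply the weighted Aubin--Nitsche Lemma~\ref{L:aubinnitsche}, bound the primal factor by Theorem~\ref{T:approxanis} and the dual factor by Theorem~\ref{T:approxanisadjoint}, then absorb one copy of $\|u-u_h\|_{L^2_\beta}$ via the weighted $H^2$ regularity of $\varphi_\beta$. The paper's proof is terser (it simply sets $\sigma=\beta$), whereas your choice of $\sigma$ strictly between $\mu$ and $\min\{\beta,\tfrac{n-m}{2}\}$ is a bit more careful since it also covers $\beta\geq 1$, but the substance is identical.
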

 \begin{proof}
   For any $\beta>0$, we can pick some $\sigma=\beta$, and $u\in W_{\sigma}$, so the estimates for $\|u-u_h\|_{W_{\sigma}}$ and $\|\varphi_\beta-\varphi_h\|_{W_{-\sigma}}$ hold.
 \end{proof}

 \begin{rmk}
It is important to notice that the previous result does not give estimates for the $L^2$ norm of the error. In this sense, Theorem \ref{T:approxanisadjoint} fails in the simplest estimate given in \eqref{badequation} for elements \emph{far} from $\Gamma$.
This is a consequence of the lack of a regularity result for $\varphi_\beta$ that read as $|\varphi_\beta|_{H^2_{-\sigma}(\Omega)}\le \|u-u_h\|_{L^2_\beta}(\Omega),$
for some $\sigma>\beta$.

However, if we consider \emph{isotropic} graded meshes, where $h_{T,3}$ is graded as $h_{T,1}$, we have that \eqref{badequation} is bounded by $r_T^{1-\mu+\beta-\sigma}h|\varphi|_{H^2_{-\beta}}$, so the condition $\mu<1+\beta-\sigma$ is enough to obtain an estimate $O(h)$. It is easy to check that the same condition works for elements $T$ such that $T\cap\Gamma\nempty$. Hence, we have the following corollary.
 \end{rmk}

\begin{coro}
If $\cT_h$ is an isotropic graded mesh with parameter $\mu<\frac{1+\beta}{2}$, we have that
$\|u-u_h\|_{L^2_{\beta}(\Omega)}= O(h^2).$
\end{coro}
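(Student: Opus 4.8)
The plan is to derive the $L^2_\beta$ bound from the weighted Aubin--Nitsche argument of Lemma \ref{L:aubinnitsche}, exactly as in the anisotropic corollary above, but to exploit the extra room that isotropic grading gives in the adjoint estimate. Concretely, the Aubin--Nitsche identity produces $\|u-u_h\|_{L^2_\beta}^2 \le \|u-u_h\|_{W_\sigma}\inf_{\varphi_h\in W_h}\|\varphi_\beta-\varphi_h\|_{W_{-\sigma}}$, where the exponent $\sigma$ is ours to choose subject to \eqref{singularwelldefcond} and $\sigma\le\beta+1$. Taking $S_h\varphi_\beta$ as the competitor in the infimum, the two ingredients I need are Theorem \ref{T:approxanis} for the primal factor and the isotropic refinement of Theorem \ref{T:approxanisadjoint}, recorded in the preceding Remark, for the adjoint factor.

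The crux is to track how each ingredient constrains $\mu$ through $\sigma$ and then to balance them. Theorem \ref{T:approxanis} gives $\|u-u_h\|_{W_\sigma}=O(h)$ as soon as $\mu<\sigma$. For the adjoint, the Remark notes that on an isotropic mesh, where $h_{T,3}$ is graded like $h_{T,1}$, the term \eqref{badequation} on far elements collapses to $r_T^{1-\mu+\beta-\sigma}h\,|\varphi_\beta|_{H^2_{-\beta}}$ and the near-element contribution obeys the same bound; hence $\|\varphi_\beta-S_h\varphi_\beta\|_{W_{-\sigma}}\lesssim h\,|\varphi_\beta|_{H^2_{-\beta}}$ whenever $\mu<1+\beta-\sigma$. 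I would then optimize by equating the two constraints $\mu<\sigma$ and $\mu<1+\beta-\sigma$, which meet at $\sigma=\tfrac{1+\beta}{2}$. For the relevant range $0<\beta<1$ this $\sigma$ lies in $(0,1)$, so it satisfies \eqref{singularwelldefcond} and $\sigma\le\beta+1$, and the common constraint becomes exactly $\mu<\tfrac{1+\beta}{2}$.

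With $\sigma=\tfrac{1+\beta}{2}$ fixed, the argument closes in the usual way: inserting the two $O(h)$ estimates gives $\|u-u_h\|_{L^2_\beta}^2\lesssim h^2\,|\varphi_\beta|_{H^2_{-\beta}}$, and the weighted elliptic regularity recalled after Lemma \ref{L:aubinnitsche}, namely $|\varphi_\beta|_{H^2_{-\beta}}\lesssim\|\varphi_\beta\|_{H^2_{-\beta}}\lesssim\|u-u_h\|_{L^2_\beta}$, absorbs one power of the error so that dividing by $\|u-u_h\|_{L^2_\beta}$ leaves $\|u-u_h\|_{L^2_\beta}=O(h^2)$. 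I expect the only genuine difficulty to lie upstream, in the adjoint estimate on isotropic meshes --- namely in checking that the isotropic grading truly tames \eqref{badequation} uniformly over far \emph{and} near elements --- but since this is precisely what the preceding Remark establishes, the corollary itself reduces to the bookkeeping of choosing $\sigma$ so that both $O(h)$ estimates are simultaneously available.
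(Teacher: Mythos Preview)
Your proposal is correct and follows essentially the same route as the paper: combine the primal estimate $\|u-u_h\|_{W_\sigma}=O(h)$ from Theorem~\ref{T:approxanis} (requiring $\mu<\sigma$) with the isotropic adjoint estimate from the preceding Remark (requiring $\mu<1+\beta-\sigma$), then choose $\sigma$ to balance the two constraints, which the paper phrases as taking $\mu$ below the optimum of $\min\{\sigma,1+\beta-\sigma\}$ over $0<\sigma<1$. The closing absorption via the weighted regularity $\|\varphi_\beta\|_{H^2_{-\beta}}\lesssim\|u-u_h\|_{L^2_\beta}$ is exactly what the paper intends.
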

\begin{proof}
 We have that if $\mu<\sigma$ then, $\|u-u_h\|_{W_{\sigma}}=O(h)$, and if $\mu<1+\beta-\sigma$, $\|\varphi_\beta-S_h\varphi_\beta\|_{W_{-\sigma}}\le C h \|u-u_h\|_{L^2_\beta}$,
 so we need to take $\mu<\max\{\sigma,1+\beta-\sigma\}$, for any $0<\sigma<1$, so $\mu<(\beta+1)/2$ is enough to obtain $O(h^2)$.
\end{proof}


\subsection{Two dimensional problem}

The case $n=2$ is slightly different. On the one hand, $u$ does not belong to $K^2_{\sigma-1}$: the gradient of $u$ is as smooth as $u$ itself. This does not allow a stability estimate like \eqref{stabilityIhanis}.
However, since $u\in H^1_0$ there is no need to truncate the interpolation operator as we did in the three dimensional problem: we can use the standard Scott-Zhang interpolator $S_h$. On the other hand, using the truncated interpolator $I_h$ we were able to treat elements in $\cT_h^{near}$ as elements in $\cT_h^{far}$ where the weight can be pulled out or pushed in the norms as needed. While considering $S_h$ for the adjoint problem, we took advantage of the fact that $\varphi\in H^2$. None of these strategies are possible when using $S_h$ for $u$ in $n=2$.

We prove only a stability estimate analogue to \eqref{stabilityIhanis}, particularly for $\partial_{x_1}S_h v$. The rest of the analysis is similar to the case $n=3$.

\begin{lemma}
  Let $T\in\cT_h^{near}$, $T\subset B(\Gamma,1)\setminus B^{out}_\tau$. Given $u$ the solution of \eqref{problem},  the following stability estimates hold:
  \begin{equation}\label{stabilitySh2dx1}
    \|\partial_{x_1}S_h u\|_{L^2_{\sigma}(T)} \lesssim \|\partial_{x_1} u\|_{L^2_{\sigma}(S_T)}+\|\partial_{x_1}^2 u\|_{L^2(S_T)_{\sigma+1}} + h_{T,2}\|\partial^2_{x_2 x_1}u\|_{L^2_{\sigma}(S_T)}
  \end{equation}
  For $\partial_{x_2}S_h u$, \eqref{stabilitySh2} holds.
\end{lemma}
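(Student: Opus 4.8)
The plan is to follow the scheme used for \eqref{stabilitySh1}, adapting it to the positive weight $r^\sigma$ that the solution $u$ carries. Writing $\partial_{x_1}S_h u=\sum_{i:x_i\in\bar T}a_i\,\partial_{x_1}\phi_i$ with $a_i=\int_{\xi_i}u\psi_i$, I would first exploit that $S_h$ reproduces $\mathcal{P}_1$ to subtract a function $\omega=\omega(x_2)$ that is affine in $x_2$: since $\partial_{x_1}\omega=0$ one has $\partial_{x_1}S_h u=\partial_{x_1}S_h(u-\omega)$, so $u$ may be replaced by $u-\omega$. Exactly as in \eqref{stabilitySh1}, on a tensor--product element the edges $\xi_i$ entering the definition of $S_h$ lie on two segments parallel to the $x_1$ axis; I would fatten them to $\bar\xi_1,\bar\xi_2$ with $|\bar\xi_k|\sim|\xi_i|\sim h_{T,1}$ and pick $\omega$ so that $\omega|_{\bar\xi_k}$ is the mean of $u$ over $\bar\xi_k$. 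Then $|a_i|\le|\xi_i|^{-1}\int_{\bar\xi_k}|u-\omega|$, the one--dimensional Poincar\'e inequality on $\bar\xi_k$ (see \cite[Theorem 3.2]{AD_Poincare}) gives $\int_{\bar\xi_k}|u-\omega|\lesssim h_{T,1}\|\partial_{x_1}u\|_{L^1(\bar\xi_k)}$, and Lemma~\ref{L:tracesL1} applied to $\partial_{x_1}u$ pushes this trace into the bulk, producing the combination $\|\partial_{x_1}u\|_{L^2(T)}+h_{T,1}\|\partial^2_{x_1}u\|_{L^2(T)}+h_{T,2}\|\partial^2_{x_2x_1}u\|_{L^2(T)}$, i.e. the three terms of \eqref{stabilitySh2dx1} with the correct powers $\h_T^\alpha$.

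For the second factor I would use $|\partial_{x_1}\phi_i|\sim h_{T,1}^{-1}$ together with $\int_T r^{2\sigma}\,\dx\sim h_{T,1}^{2\sigma}|T|$, which holds because $r\sim x_1$ sweeps $[0,h_{T,1}]$ and already reaches size $\bar r_T^{2\sigma}\sim h_{T,1}^{2\sigma}$ on a fixed fraction of $T$. Multiplying and summing over the boundedly many nodes assembles the right--hand side. The subtle — and genuinely new — point is the sign of the weight. In the three--dimensional estimate the weight was $r^{-\sigma}$, which \emph{blows up} near $\Gamma$, so unweighted bulk norms could be freely converted via $\|f\|_{L^2(T)}\le h_{T,1}^\sigma\|f\|_{L^2_{-\sigma}(T)}$; and the truncated interpolator of \eqref{stabilityIhanis} dodged the issue by only seeing faces on $\cT_h^{far}$, where $r$ is essentially constant. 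Here neither escape is available: $r^\sigma$ \emph{vanishes} on the edge of $T$ lying on $\Gamma$, and the reverse conversion $h_{T,1}^\sigma\|f\|_{L^2(T)}\lesssim\|f\|_{L^2_\sigma(T)}$ is false for functions concentrated near $\Gamma$.

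The main obstacle is therefore to obtain the correct weight on the second derivatives. A crude extraction of $\bar r_T^\sigma$ would place weight $\sigma$ on $\partial^2_{x_1}u$; but by Theorem~\ref{T:norms} the solution only satisfies $\nabla u\in K^1_\sigma$, so $\partial^2_{x_1}u\in L^2_{\sigma+1}$ while in general $\partial^2_{x_1}u\notin L^2_\sigma$. Hence the heavier weight $\sigma+1$ appearing in \eqref{stabilitySh2dx1} is not cosmetic: with any lighter weight the right--hand side would be infinite. This forces one to keep the weight inside the estimate from the start rather than pulling $\bar r_T^\sigma$ out, and to upgrade the crude factor $h_{T,1}$ multiplying $\partial^2_{x_1}u$ into an honest extra power of $r$.

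The cleanest way to produce precisely the weights $\sigma$ and $\sigma+1$ is to scale to the reference element $\hat T$ by the affine map $F_T$ already used in Lemmas~\ref{L:localanisoPoincare} and \ref{L:localPoincare}, under which $r\sim h_{T,1}\hat r$ and $\hat r^{2\sigma}$ is a fixed $A_2$ weight. On $\hat T$ the bound reduces to a weighted Scott--Zhang stability estimate in which the trace of $\partial_{\hat x_1}\hat u$ near $\{\hat x_1=0\}$ is controlled by $\partial_{\hat x_1}\hat u$ measured with $\hat r^{\sigma}$, by $\partial^2_{\hat x_1}\hat u$ measured with $\hat r^{\sigma+1}$, and by the mixed derivative $\partial_{\hat x_2}\partial_{\hat x_1}\hat u$ measured with $\hat r^{\sigma}$; the extra unit of weight on the purely normal derivative is supplied by a Hardy/improved--Poincar\'e inequality in the $\hat x_1$ direction, of exactly the type of Theorem~\ref{T:impPoincare}, while the tangential $\hat x_2$ derivative keeps the weight $\sigma$. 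Undoing $F_T$ distributes $h_{T,1},h_{T,2}$ so that the $\hat r^{\sigma+1}$ term becomes $\|\partial^2_{x_1}u\|_{L^2_{\sigma+1}(S_T)}$ and the mixed term becomes $h_{T,2}\|\partial^2_{x_2x_1}u\|_{L^2_\sigma(S_T)}$, which is exactly \eqref{stabilitySh2dx1}. The estimate for $\partial_{x_2}S_h u$ is easier: subtracting a constant and repeating the argument behind \eqref{stabilitySh2}, where the critical $x_1$ direction enters only through the benign factor $h_{T,1}h_{T,2}^{-1}$, needs no Hardy step, so \eqref{stabilitySh2} carries over verbatim with $x_3$ replaced by $x_2$.
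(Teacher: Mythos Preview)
You correctly identify the crux: the extra unit of weight on $\partial_{x_1}^2 u$ is mandatory (since only $\partial_{x_1}^2 u\in L^2_{\sigma+1}$ is available), and crude extraction of $\bar r_T^{\sigma}$ cannot produce it. Your proposed fix via scaling to $\hat T$ and invoking a Hardy/improved--Poincar\'e step is, however, only sketched; what is actually needed there is a \emph{weighted trace} estimate on the reference element (controlling the trace of $\partial_{\hat x_1}\hat u$ on $\hat\xi_k$ by $\|\partial_{\hat x_1}\hat u\|_{L^2_{\hat\sigma}}$ and $\|\partial^2_{\hat x_1}\hat u\|_{L^2_{\hat\sigma+1}}$), and Theorem~\ref{T:impPoincare} by itself does not provide that.

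The paper takes a more direct route that avoids this difficulty entirely. After subtracting $w(x_2)$ with $w|_{\bar\xi_k}=|\bar\xi_k|^{-1}\int_{\bar\xi_k}u$, it replaces the ordinary $1$D Poincar\'e inequality you use by the \emph{improved} one with distance weight (citing \cite{BS_Poincare}):
\[
\|u-w\|_{L^1(\bar\xi_k)}\lesssim \|d\,\partial_{x_1}u\|_{L^1(\bar\xi_k)}\le \|x_1\,\partial_{x_1}u\|_{L^1(\bar\xi_k)},
\]
$d$ being the distance to $\partial\bar\xi_k$. The trace lemma is then applied to $x_1\partial_{x_1}u$ rather than to $\partial_{x_1}u$, producing $\|x_1\partial_{x_1}u\|_{L^1(S_T)}$, $h_{T,1}\|x_1\partial^2_{x_1}u\|_{L^1(S_T)}$ and $h_{T,2}\|x_1\partial^2_{x_2x_1}u\|_{L^1(S_T)}$. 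The built--in factor $x_1\sim r$ is precisely what, after H\"older against $r^{-\sigma}$ and the bound $\|r^{\pm\sigma}\|_{L^2(S_T)}\lesssim|T|^{1/2}h_{T,1}^{\pm\sigma}$, delivers the weights $\sigma$, $\sigma+1$, $\sigma$ on the three terms (in the mixed term one trades $|x_1|\le h_{T,1}$ first). Thus the extra power of $r$ is inserted \emph{on the edge}, before the trace step, rather than recovered afterwards by a Hardy argument.
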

\begin{proof}
  We proceed as in the proof of \eqref{stabilitySh1}: the three sets $\xi_i$ corresponding to the nodes $x_i\in\bar{T}$ are contained in two sets $\bar{\xi}_1$ and $\bar{\xi}_2$ paralells to the $x_1$ axis, with $|\xi_i|\sim |\bar{\xi}_k|$.
  We take a polynomial $w=w(x_2)$ such that $w|_{\bar{\xi}_k}=|\bar{\xi}_k|^{-1}\int_{\bar{\xi}_k} u$, and obtain:
  \begin{align*}
    \|\partial_{x_1} &S_h u\|_{L^2_{\sigma}(T)} = \|\partial_{x_1}S_h (u-w)\|_{L^2_{\sigma}(T)} \le \sum_{x_i\in T} \left|\int_{\xi_i}(u-w)\psi_i\right|\|\partial_{x_1}\phi_i\|_{L^2_{\sigma}(T)}\\
    &\lesssim \sum_{k=1,2}|\bar{\xi}_k|^{-1}h_{T,1}^{-1}\|r^{\sigma}\|_{L^2(T)} \int_{\bar{\xi}_k}|u-w| =I
  \end{align*}
  Now, we apply the improved Poincar\'e inequality (see for example \cite{BS_Poincare}):
  $$\|u-w\|_{L^1(\bar{\xi}_{k})}\lesssim \|\partial_{x_1}u\cdot d\|_{L^1(\bar{\xi}_k)} \le \|x_1\partial_{x_1}u \|_{L^1(\bar{\xi}_k)},$$
  where $d$ represents here the distance to the boundary of the $1$-dimensional set $\bar{\xi}_k$.  Hence, we can continue applying Lemma \ref{L:tracesL1} and H\"older inequality:
  \begin{align*}
     \int_{\bar{\xi}_k} &|u-w| \le  \|x_1\partial_{x_1}u \|_{L^1(\bar{\xi}_k)} \le |\bar{\xi}_k||T_i|^{-1}\sum_{|\alpha|\le 1}\h_T^\alpha\|D^{\alpha}(x_1\partial_{x_1}u )\|_{L^1(S_T)} \\
     &\le |\bar{\xi}_k||T|^{-1}
          \big\{ \|x_1 \partial_{x_1}u \|_{L^1(S_T)} + h_{T,1}\|x_1 \partial^2_{x_1}u\|_{L^1(S_T)} + h_{T,2}\|\partial^2_{x_2 x_1}u\|_{L^1(S_T)}\big\}\\
          &\le |\bar{\xi}_k||T|^{-1}\|r^{-\sigma}\|_{L^2} \big\{\|x_1 \partial_{x_1}u \|_{L^2_{\sigma}} + h_{T,1}\|x_1 \partial^2_{x_1}u\|_{L^2_{\sigma}} +
           h_{T,2}\|x_1\partial^2_{x_2 x_1}u\|_{L^2_{\sigma}}\big\} \\
     &\le |\bar{\xi}_k||T|^{-1}\|r^{-\sigma}\|_{L^2} \big\{\|x_1 \partial_{x_1}u \|_{L^2_{\sigma}} + h_{T,1}\|x_1 \partial^2_{x_1}u\|_{L^2_{\sigma}} +
      h_{T,2}h_{T,1}\|\partial^2_{x_2 x_1}u\|_{L^2_{\sigma}}\big\}
  \end{align*}
 where all the norms are taken in $S_T$. Now, we observe that $\|r^\eta\|_{L^2(S_T)}\le |T|^\frac{1}{2}h_{T,1}^\eta$, for any $\eta$ satisfying \eqref{A2cond}, so using this for $\eta=\sigma$ and $\eta=-\sigma$ the result follows.
\end{proof}

With this result we can prove:

\begin{thm}\label{T:approxanis2D}
Let $\cT_h$ be a graded anisotropic mesh, and $u_h$ the finite element solution of problem \eqref{weakproblem}. Then if $\mu<\sigma+\frac{1}{2}$, we have that:
\begin{equation}\label{anisoapprox2D}
\|u- u_h\|_{W_{\sigma}(\Omega)} = O(h).
\end{equation}
\end{thm}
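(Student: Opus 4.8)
The plan is to mimic the structure of the proof of Theorem~\ref{T:approxanis} (the three-dimensional $W_\sigma$ estimate), since the two-dimensional problem is ``the same problem with one fewer transverse direction'', but with two essential substitutions dictated by the discussion preceding the statement. First, because in $n=2$ the gradient of $u$ is only as regular as $u$ itself (there is no $K^2_{\sigma-1}$ membership, only $u\in L^2_\sigma$ and $\nabla u\in K^1_\sigma$ by Theorem~\ref{T:norms}), I would use the \emph{untruncated} Scott--Zhang interpolator $S_h$ rather than the truncated $I_h$, exactly as flagged in the text. By \eqref{stability} it suffices to bound $\|u-S_h u\|_{W_\sigma(\Omega)}$, so I would proceed element-wise and split $\cT_h$ into the familiar regions: elements far from $\Gamma$ (where the anisotropic approximation property \eqref{approxIhanis} applies and the weight $r_T^\sigma$ is essentially constant), isotropically graded elements in $B^{out}_\tau$, and the genuinely anisotropic near-elements in $B(\Gamma,1)\setminus B^{out}_\tau$.

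For the far elements the argument is routine and parallels the corresponding step in Theorem~\ref{T:approxanis}: apply \eqref{approxIhanis}, convert the unweighted $H^1$-seminorms of first derivatives into weighted norms of the regular quantities $\partial_{x_1}u$ and $\partial_{x_2}u$ supplied by Theorem~\ref{T:anisotropic}, insert the grading $h_{T,1}\sim h\,r_T^{1-\mu}$, $h_{T,2}\sim h\,r_{T,e}^{1-\mu}$, and collect a factor $h\,\bar r_T^{\,\sigma-1-\eta-\mu}$ together with compensating powers of $r_{T,e}$. The new ingredient, and the core of the proof, is the near-element estimate, which must rest on the freshly proved stability bound \eqref{stabilitySh2dx1} for $\partial_{x_1}S_h u$ (and \eqref{stabilitySh2} for $\partial_{x_2}S_h u$) in place of the truncated stability \eqref{stabilityIhanis} used in three dimensions. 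Concretely, I would write $\|u-S_h u\|_{W_\sigma(T)}\lesssim \bar r_T^\sigma\|\nabla(u-S_h u)\|_{L^2(T)}$, bound $\|\partial_{x_1}S_h u\|$ by \eqref{stabilitySh2dx1} and $\|\partial_{x_2}S_h u\|$ by \eqref{stabilitySh2}, and feed in the anisotropic regularity of Theorem~\ref{T:anisotropic}, turning each term into a weighted norm controlled by $C$ times appropriate powers of $h_{T,1}\sim h^{1/\mu}$, $h_{T,2}\sim h\,r_{T,e}^{1-\mu}$, and $r_{T,e}$.

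The main obstacle, and the reason the constant in \eqref{stabilitySh2dx1} is written with the weight pulled \emph{onto} $\partial_{x_1}u$ (and with the Hardy-type factor $x_1\partial_{x_1}u$ appearing in its proof), is the bookkeeping of exponents near the singular segment: one must verify that the powers of $\bar r_T\sim h^{1/\mu}$ produced by the near-element stability, combined with the extra half-power $r_{T,e}^{1/2}$ that Theorem~\ref{T:anisotropic} demands on $\partial_{x_3}$-type terms (here the $x_2$-derivative terms), still yield a clean $O(h)$ bound. Following the trick already used at the end of Theorem~\ref{T:approxanis}, I would split $\bar r_T^{\,\sigma-1-\eta}=\bar r_T^{\,\sigma-1-\eta-(\mu-\frac12)}\,\bar r_T^{\,\mu-\frac12}$ when $\mu>\tfrac12$, absorbing the excess against $r_{T,e}^{1/2}$ via $\bar r_T\le r_{T,e}$, so that preserving the $h$-order requires only $\sigma-\eta-\mu+\tfrac12\ge 0$; since Theorem~\ref{T:norms} permits $\eta>-\tfrac12$ in the case $n=2$, $m=1$, this reduces precisely to the stated condition $\mu<\sigma+\tfrac12$. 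I expect the delicate point to be confirming that this exponent-splitting remains valid simultaneously for the $\partial_{x_1}$ and $\partial_{x_2}$ contributions and that the finite overlap of the patches $S_T$ lets one sum element-wise to the global $O(h)$ bound; the rest is the same element-wise summation already carried out in the three-dimensional case.
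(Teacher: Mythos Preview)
Your overall plan---use the untruncated Scott--Zhang operator $S_h$, reduce via \eqref{stability} to interpolation error, split elementwise, apply \eqref{approxIhanis} on $\cT_h^{far}$ and the stability bounds \eqref{stabilitySh2dx1}--\eqref{stabilitySh2} on $\cT_h^{near}$---is exactly the paper's route. The gap is in the exponent bookkeeping, and it is not cosmetic.

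You carry over the three-dimensional near-element factor $\bar r_T^{\,\sigma-1-\eta}$, but in two dimensions Theorem~\ref{T:norms} does \emph{not} place $u$ in $K^2_{\eta-1}$; it only gives $\nabla u\in K^1_\eta$ for $\eta>-\tfrac12$, i.e.\ $\partial_{x_1}u\in L^2_\eta$ and $D^2u\in L^2_{\eta+1}$. The stability estimate \eqref{stabilitySh2dx1} is already written in the weighted norm (weights $\sigma,\ \sigma+1,\ \sigma$ on the right), so one should not first pull out $\bar r_T^{\sigma}$ and then invoke it; one works directly in $L^2_\sigma$ and shifts each term to the regularity exponent. Doing so produces the factor $\bar r_T^{\,\sigma-\eta}$, one unit larger than what you wrote. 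Since $\bar r_T\sim h^{1/\mu}$ on near elements, the requirement $\bar r_T^{\,\sigma-\eta}\le h$ is $\mu\le\sigma-\eta$, and with $\eta>-\tfrac12$ this is precisely $\mu<\sigma+\tfrac12$. The same exponent $\sigma-\eta$ (hence the same condition) appears on $\cT_h^{far}$. No $r_{T,e}$ splitting trick is needed here; the paper dispenses with it.

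There is also an arithmetic slip in your final step: the inequality $\sigma-\eta-\mu+\tfrac12\ge0$ with $\eta>-\tfrac12$ yields $\mu<\sigma+1$, not $\mu<\sigma+\tfrac12$. And had you run your argument with the (incorrect) factor $\bar r_T^{\,\sigma-1-\eta}$ consistently, the binding constraint from the terms \emph{without} $r_{T,e}$ would be $\mu\le\sigma-1-\eta$, giving only $\mu<\sigma-\tfrac12$, strictly weaker than the theorem. The missing observation is exactly that the two-dimensional weight shift is $\sigma-\eta$ rather than $\sigma-1-\eta$; this extra unit is what compensates for the tighter regularity range $\eta>-\tfrac12$ and delivers the stated grading condition.
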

\begin{proof}
We consider only the case where the calculation is different than the one used in $n=3$. Take $T\in\cT_h^{near}$, then:
\begin{align*}
  \|\partial_{x_1} &(u-S_h u)\|_{L^2_{\sigma}(T)}\le \|\partial_{x_1}u\|_{L^2_{\sigma}(T)} + \|\partial_{x_1}S_h u\|_{L^2_\sigma(T)} \\
  &\lesssim \|\partial_{x_1} v\|_{L^2_{\sigma}(S_T)}+\|\partial_{x_1}^2 v\|_{L^2(S_T)_{\sigma+1}} + h_{T,2}\|\partial^2_{x_2 x_1}v\|_{L^2_{\sigma}(S_T)} \\
  &\le \bar{r}_{T}^{\sigma-\eta} \big\{\|\partial_{x_1} v\|_{L^2_{\eta}(S_T)}+\|\partial_{x_1}^2 v\|_{L^2(S_T)_{\eta+1}} + h_{T,2}\|\partial^2_{x_2 x_1}v\|_{L^2_{\eta}(S_T)}\big\}.
\end{align*}
Now, $\bar{r}_T \sim h_{T,1} = h^\frac{1}{\mu}$, so in order to obtain an $O(h)$ estimate, we need $\mu\le \sigma-\eta$. The same condition is obtained for $T\in\cT_h^{far}$ and for $\partial_{x_2}(u-S_hu)$ when $T\in\cT_h^{near}$. Since the restriction $\eta>-\frac{1}{2}$ holds, the result follows.
\end{proof}

The analysis for the adjoint problem is exactly as in the three dimensional case. Once again, for anisotropic meshes the estimate for $|\varphi-S_h\varphi|_{H^1_{-\sigma}}$ requieres $\beta\geq\sigma$, so we obtain:
\begin{thm}
  Taking $\mu<\beta+\frac{1}{2}$, we have that:
  $$\|u-u_h\|_{L^2_{\beta}}=O(h^2).$$
\end{thm}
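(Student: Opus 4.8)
The plan is to derive the estimate as a direct application of the weighted Aubin--Nitsche lemma (Lemma~\ref{L:aubinnitsche}), mirroring the three--dimensional corollary. The single free parameter is $\sigma$, and the right choice is $\sigma=\beta$. With this choice $u\in W_\sigma$, the compatibility hypothesis $\sigma\le\beta+1$ of Lemma~\ref{L:aubinnitsche} holds trivially, and for the relevant range $\tfrac14<\beta<\tfrac12$ the value $\sigma=\beta$ sits inside the well--posedness interval \eqref{singularwelldefcond}, which for $n=2$, $m=1$ reads $-\tfrac12<\sigma<\tfrac12$. The upper bound $\beta<\tfrac12$ is precisely what makes $r^{-2\beta}$ a Muckenhoupt $A_2$ weight, so that the adjoint problem \eqref{adjoint} is well posed and the weighted elliptic regularity bound $|\varphi_\beta|_{H^2_{-\beta}}\lesssim\|u-u_h\|_{L^2_\beta}$ is available.

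With $\sigma=\beta$ fixed, I would assemble the two ingredients. For the primal error I invoke Theorem~\ref{T:approxanis2D}: since $\mu<\beta+\tfrac12=\sigma+\tfrac12$, it gives $\|u-u_h\|_{W_\sigma}=O(h)$. For the dual factor, $\varphi_\beta\in H^2$, so the standard Scott--Zhang interpolator $S_h$ is admissible and the interpolation analysis carries over verbatim from the three--dimensional Theorem~\ref{T:approxanisadjoint}; the only hypothesis it uses is $\beta\ge\sigma$, which here holds with equality. Hence
$$\inf_{\varphi_h\in W_h}\|\varphi_\beta-\varphi_h\|_{W_{-\sigma}}\le\|\varphi_\beta-S_h\varphi_\beta\|_{W_{-\sigma}}\lesssim h\,|\varphi_\beta|_{H^2_{-\beta}}.$$

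The conclusion follows by feeding these bounds into the Aubin--Nitsche duality identity and absorbing one factor of the error norm through the regularity estimate:
$$\|u-u_h\|_{L^2_\beta}^2=\langle u-u_h,g_\beta\rangle\le\|u-u_h\|_{W_\sigma}\,\|\varphi_\beta-S_h\varphi_\beta\|_{W_{-\sigma}}\lesssim O(h)\cdot h\,|\varphi_\beta|_{H^2_{-\beta}}\lesssim h^2\,\|u-u_h\|_{L^2_\beta},$$
so that dividing by $\|u-u_h\|_{L^2_\beta}$ yields $\|u-u_h\|_{L^2_\beta}=O(h^2)$. Granting Theorem~\ref{T:approxanis2D} and the transfer of the dual analysis from $n=3$, the argument is essentially bookkeeping; the only genuine point to verify is that the single choice $\sigma=\beta$ is compatible with \emph{all} the constraints at once --- the well--posedness range \eqref{singularwelldefcond}, the $A_2$ requirement $\beta<\tfrac12$ for the dual $H^2_{-\beta}$ regularity, and $\sigma\le\beta+1$. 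This is also what forces the restriction away from the plain $L^2$ norm ($\beta=0$): as already observed for $n=3$, the far--field dual estimate \eqref{badequation} requires $\beta\ge\sigma$, and no regularity result with $\sigma>\beta$ is available.
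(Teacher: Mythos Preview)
Your proposal is correct and follows the same approach as the paper: choose $\sigma=\beta$, invoke Theorem~\ref{T:approxanis2D} for the primal estimate (using $\mu<\sigma+\tfrac12=\beta+\tfrac12$), carry over the adjoint analysis from Theorem~\ref{T:approxanisadjoint} (using $\beta\ge\sigma$), and conclude via the weighted Aubin--Nitsche duality. The paper states the result without writing out a proof, relying on precisely the one-line observation you spelled out; your added discussion of why $\sigma=\beta$ is admissible (in particular the constraint $\tfrac14<\beta<\tfrac12$ coming from the standing assumption and the $A_2$ requirement for the dual regularity) is a useful clarification the paper leaves implicit.
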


However, for isotropic meshes $\beta$ can be taken $\beta<\sigma$ as long as the restriction $\mu<1+\beta-\sigma$ is satisfied, which combined with the condition $\mu<\sigma+\frac{1}{2}$ of Theorem \ref{T:approxanis2D} gives:
\begin{thm}
  For isotropic meshes, taking $\mu<\frac{3}{4}+\frac{\beta}{2}$, we have:
 $$\|u-u_h\|_{L^2_{\beta}}=O(h^2),$$
\end{thm}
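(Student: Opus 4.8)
The plan is to reproduce, in the two-dimensional setting, the same duality argument already used for the three-dimensional isotropic corollary. The two building blocks are both available: Theorem \ref{T:approxanis2D} supplies the primal estimate $\|u-u_h\|_{W_\sigma}=O(h)$ under the hypothesis $\mu<\sigma+\frac12$, while the adjoint analysis (which, as noted, proceeds exactly as for $n=3$, and for isotropic meshes relaxes to the condition $\mu<1+\beta-\sigma$) supplies $\|\varphi_\beta-S_h\varphi_\beta\|_{W_{-\sigma}}\le Ch\,|\varphi_\beta|_{H^2_{-\beta}}$. The only remaining task is to feed these into the weighted Aubin--Nitsche estimate and then to optimize over the free exponent $\sigma$.

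Concretely, I would start from Lemma \ref{L:aubinnitsche}, choosing the interpolant $\varphi_h=S_h\varphi_\beta$, so that
$$\|u-u_h\|_{L^2_\beta}^2\le \|u-u_h\|_{W_\sigma}\,\|\varphi_\beta-S_h\varphi_\beta\|_{W_{-\sigma}}.$$
The adjoint interpolation estimate together with the weighted regularity bound $\|\varphi_\beta\|_{H^2_{-\beta}}\lesssim\|u-u_h\|_{L^2_\beta}$ turns the last factor into $Ch\,\|u-u_h\|_{L^2_\beta}$; dividing once by $\|u-u_h\|_{L^2_\beta}$ leaves $\|u-u_h\|_{L^2_\beta}\lesssim h\,\|u-u_h\|_{W_\sigma}$, and inserting the primal bound $\|u-u_h\|_{W_\sigma}=O(h)$ yields the claimed $O(h^2)$. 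For this chain to close, $\mu$ must satisfy both $\mu<\sigma+\frac12$ and $\mu<1+\beta-\sigma$ simultaneously for a single admissible $\sigma$. The first bound increases with $\sigma$ and the second decreases, so the largest admissible $\mu$ is obtained by balancing them: setting $\sigma+\frac12=1+\beta-\sigma$ gives $\sigma=\frac14+\frac\beta2$ and hence $\mu<\frac34+\frac\beta2$, which is exactly the stated condition.

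I expect the only delicate point to be checking that the balancing value $\sigma=\frac14+\frac\beta2$ is genuinely admissible, i.e. that it lies in the range dictated by \eqref{singularwelldefcond} (for $n=2$, $m=1$ this reads $-\frac12<\sigma<\frac12$) and respects the standing assumptions on $\beta$. When $\beta$ is large one cannot push $\sigma$ all the way to the balancing point, so the clean formula $\mu<\frac34+\frac\beta2$ should be read together with the constraint $\sigma<\frac12$, and in that regime the admissible $\mu$ is governed by the primal condition alone. Everything else is bookkeeping: the primal and dual estimates are quoted from Theorem \ref{T:approxanis2D} and the adjoint analysis, and the duality identity is the same one already exploited for $n=3$, so no genuinely new analytic difficulty arises.
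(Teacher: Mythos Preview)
Your proposal is correct and follows precisely the route the paper indicates: combine the primal condition $\mu<\sigma+\tfrac12$ from Theorem~\ref{T:approxanis2D} with the isotropic adjoint condition $\mu<1+\beta-\sigma$ through the weighted Aubin--Nitsche lemma, then balance over~$\sigma$ to obtain $\mu<\tfrac34+\tfrac\beta2$. Your additional care about the admissibility of the balancing value $\sigma=\tfrac14+\tfrac\beta2$ relative to \eqref{singularwelldefcond} is a point the paper leaves implicit.
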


\section{Numerical experiments}\label{S:experiments}
In this section we present our numerical results. We implemented a solver in Matlab, following closely the compact implementation proposed in \cite{ACF_50lines}.

\subsection{Point delta}
We solve Problem \eqref{problem} taking $\Omega = B(0,1)$.
The exact solution is:
$$u(r) = \left\{\begin{array}{cr}
-\frac{\log(r)}{2\pi} & n=2\\
\frac{1}{4\pi}\Big(\frac{1}{r}-1\Big) & n=3,
\end{array}\right.$$
where $r=\|x\|$.

The graded meshes were obtained in two different ways. The first strategy is the one used in \cite{ABSV_delta}: we built a regular mesh of size $H$ with a set of points $Q$ and then scale this points taking: $p=q\|q\|^{\frac{1-\mu}{\mu}}$. In this way, for each $\mu$, we have two meshes: a uniform one, and a graded one, both with the same number of nodes. This allows a direct comparison between the results obtained using graded and not-graded meshes.
On the other hand, since meshes built as explained above have elements that are much larger in the radial component than in the angular ones, we also tested our results with graded meshes \emph{by construction}, i.e.: built directly by taking a set of radii $r_i$ such that $r_1\sim h^{\frac{1}{\mu}}$ and $r_{i+1}-r_i\sim hr_i^{1-\mu}$, and defining, on $\partial B(0,r_i)$ a set of points at a distance $\sim hr_i^{1-\mu}$ from each other. This method produces more regular meshes. The comparison with uniform meshes is no longer direct, but we observe that similar magnitudes of the error can be obtained with less points. Figure \ref{F:mallasR2} shows the three kind of meshes in $\R^2$: uniform, graded by re-scaling and graded by construction.

\begin{figure}[!ht]
\begin{center}
\includegraphics[width=0.28\textwidth]{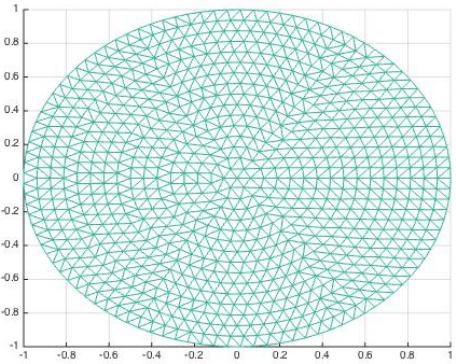}
\includegraphics[width=0.28\textwidth]{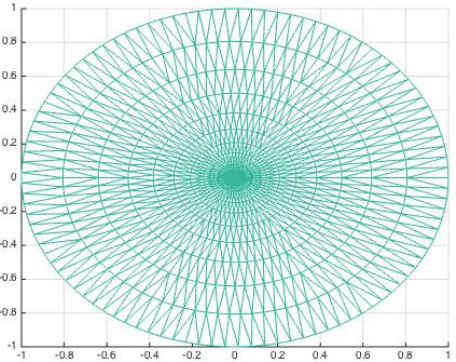}
\includegraphics[width=0.28\textwidth]{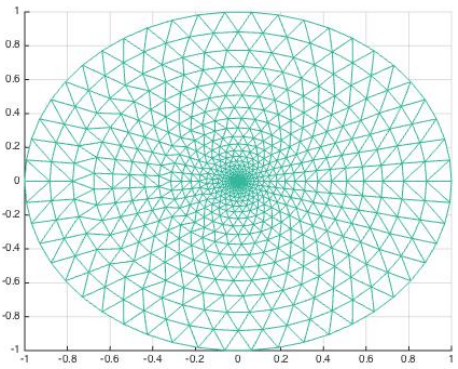}
\caption{Three meshes: uniform (left), graded by re-scaling (center), both with $N=856$, and graded by construction (right), with $N=730$, in $\R^2$ with graduation parameter $\mu=0.3$.}\label{F:mallasR2}
\end{center}
\end{figure}

Table \ref{Tab:deltaR2} shows results for $n=2$ on meshes graded by re-scaling whereas Table \ref{Tab:deltaR3} shows results for $n=3$ on meshes graded by construction. In both tables $\mu$ is the grading parameter and $h$ the step used to build the mesh. $h$ is reported only for illustrative purposes, since the order of convergence is estimated using the number of nodes, $N$.
The norms $\|u-u_h\|_{L^2(\Omega)}$ and $\|u-u_h\|_{L^2_{\beta}(\Omega)}$ are approximated through order $3$ quadrature rules. Finally, $e.o.c.$ stands for the estimated order of convergence.

{\small
\begin{longtable}{|c|c|c|c|c|c|c|c|c|c|}
  \hline
 \multirow{3}{*}{$h$} & \multirow{3}{*}{$N$} & \multicolumn{2}{c|}{$\mu=0.4$} &  \multicolumn{2}{c|}{$\mu=0.5$} &    \multicolumn{2}{c|}{$\mu=0.6$} & \multicolumn{2}{c|}{$\mu=1$} \\
 \cline{3-10}
 & & $L^2$ & $L^2_{\beta}$ & $L^2$ & $L^2_{\beta}$ & $L^2$ & $L^2_{\beta}$ & $L^2$  & $L^2_{\beta}$ \\
 & & $\times 10^{-3}$ & $\times 10^{-3}$ & $\times 10^{-3}$ & $\times 10^{-3}$ &
     $\times 10^{-3}$ & $\times 10^{-3}$ & $\times 10^{-3}$ & $\times 10^{-3}$ \\
 \hline
 $2^{-4}$ &   $856$ & $5.802$ & $4.076$ & $5.187$ & $2.766$ & $5.686$ & $2.243$ & $9.639$ & $3.783$ \\
 $2^{-5}$ &  $3319$ & $1.147$ & $1.022$ & $1.411$ & $0.693$ & $1.812$ & $0.569$ & $4.822$ & $1.443$ \\
 $2^{-6}$ & $13070$ & $0.371$ & $0.256$ & $0.379$ & $0.173$ & $0.575$ & $0.144$ & $2.411$ & $0.548$ \\
 $2^{-7}$ & $51875$ & $0.093$ & $0.064$ & $0.101$ & $0.043$ & $0.182$ & $0.036$ & $1.206$ & $0.208$ \\
 \hline\hline
 \multicolumn{2}{|c|}{e.o.c} & $2.013$ & $2.025$ & $1.92$ & $2.025$ & $1.678$ & $1.919$ & $1.013$ & $1.414$ \\
 \hline
 \caption{Error in $L^2$ and in $L^2_\beta$ for $\beta = 0.4$, and estimated order of convergence for $-\Delta u = \delta$, in $n=2$, for different mesh graduations, and meshes graded by re-scaling a uniform mesh.}
 \label{Tab:deltaR2}
\end{longtable}
}

These numerical experiments are consistent with our predictions. In both cases an order $\sim 2$ is obtained in $L^2$, when $\mu<1-\frac{n}{4}$. In the critical case $\mu_0=1-\frac{n}{4}$, we observe a loss of order that increases with $\mu$, leading to an order $\sim 1$ when $\mu = 2\mu_0$. For weighted norms $L^2_\beta$ with $\beta>0$, the order is $\sim 2$ for a wider range of values of $\mu$.

{\small
\begin{longtable}{|c|c|c|c|c|c|c|c|c|c|}
  \hline
 \multirow{3}{*}{$h$}  &  \multicolumn{3}{c|}{$\mu=0.18$} & \multicolumn{3}{c|}{$\mu=0.25$} &
                          \multicolumn{3}{c|}{$\mu=0.5$} \\
 \cline{2-10}
     & \multirow{2}{*}{$N$} & $L^2$ & $L^2_{\beta}$
     & \multirow{2}{*}{$N$} & $L^2$ & $L^2_{\beta}$
     & \multirow{2}{*}{$N$} & $L^2$ & $L^2_{\beta}$ \\
     & & $\times 10^{-3}$ & $\times 10^{-4}$ & & $\times 10^{-3}$ & $\times 10^{-4}$ & &
     $\times 10^{-3}$ & $\times 10^{-4}$  \\
 \hline
 $2^{-3}$            &  $18159$ & $1.066$ & $3.330$ & $13483$ & $1.402$ & $3.346$ & $6853$ & $5.448$ & $5.331$ \\
 $2^{-\frac{10}{3}}$ & $33388$ & $0.707$ & $2.197$ & $23753$ & $0.979$ & $2.261$ & $12084$ & $4.450$ & $3.592$ \\
 $2^{-\frac{11}{3}}$ & $62413$ & $0.463$ & $1.411$ & $45004$ & $0.654$ & $1.147$ & $21736$ & $3.595$ & $2.547$ \\
 $2^{-4}$            & $118854$ & $0.299$ & $0.916$ & $89943$& $0.430$ & $0.980$ & $42552$ & $2.840$ & $1.747$ \\
 \hline\hline
 e.o.c & & $2.03$ & $2.06$  & & $1.90$ & $1.98$ & & $1.07$ & $1.84$ \\
 \hline
 \caption{Error in $L^2$ and in $L^2_\beta$ for $\beta = 0.7$, and estimated order of convergence for $-\Delta u = \delta$, in $n=3$, for different graduations, on meshes graded by construction.}
 \label{Tab:deltaR3}
\end{longtable}
}

We observe, naturally, that in meshes graded \emph{by construction}, for the same mesh parameter $h$, the number of nodes $N$ decreases when $\mu$ increases. Similar results are obtained when using meshes graded by construction for $n=2$ or meshes graded by rescaling for $n=3$.

\subsection{Segment singularity}
 As explained above, anisotropic meshes where built with elements of tensor product type on $B(\Gamma,1)\setminus B^{out}_\tau$. Figure \ref{F:segment2d} illustrates the difference between isotropic and anisotropic graded mesh. It shows the domain $\Omega=B(\Gamma,1)\subset\R^2$ where  $\Gamma$ is given by \eqref{segment} with $L=1$. In both meshes the parameters chosen are $h=2^{-3}$ and $\mu = 0.4$. In the anisotropic mesh $\tau=0.8$.

 \begin{figure}[!h]
 \begin{center}
 \includegraphics[width=0.35\textwidth]{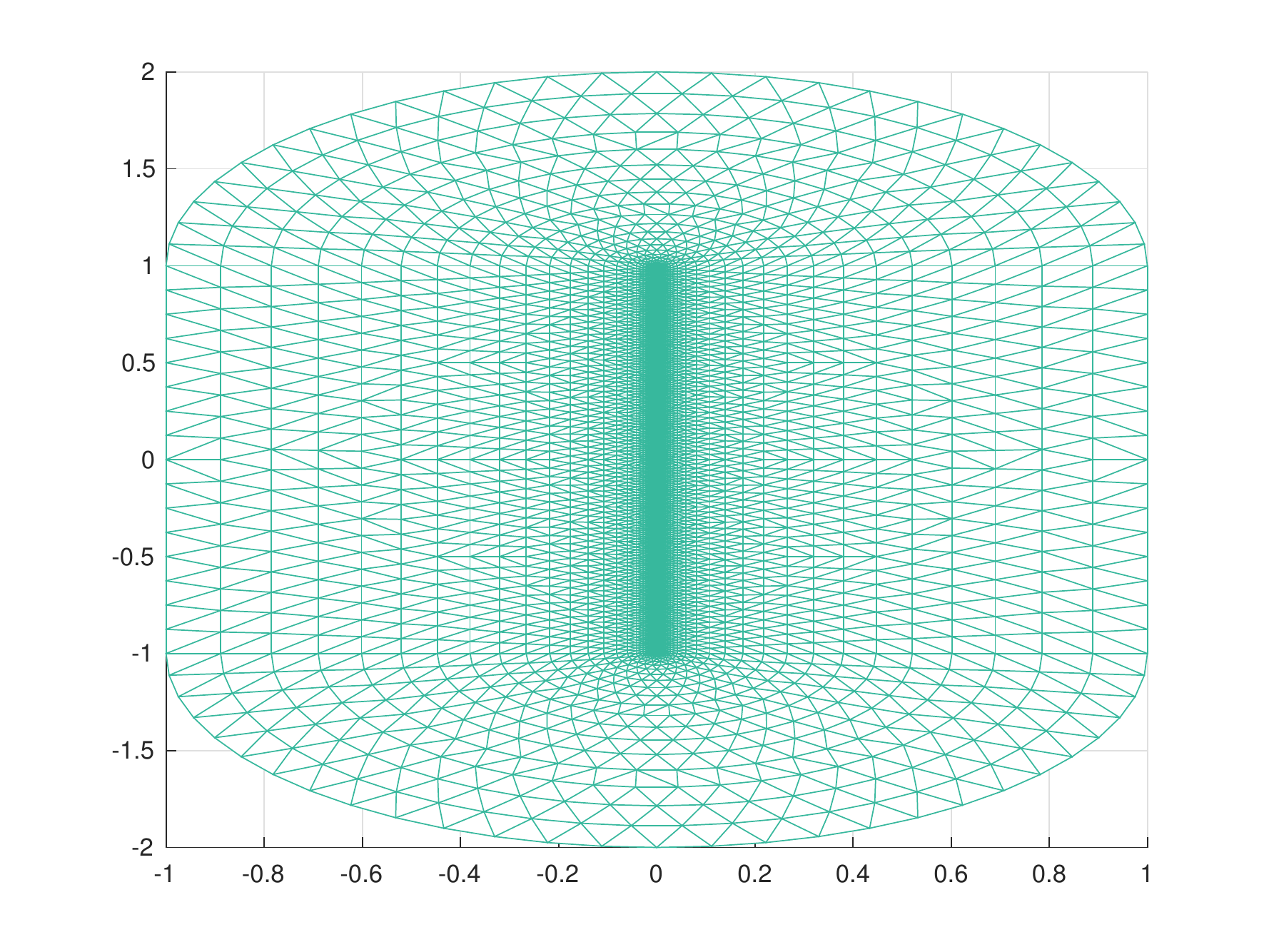}
 \includegraphics[width=0.35\textwidth]{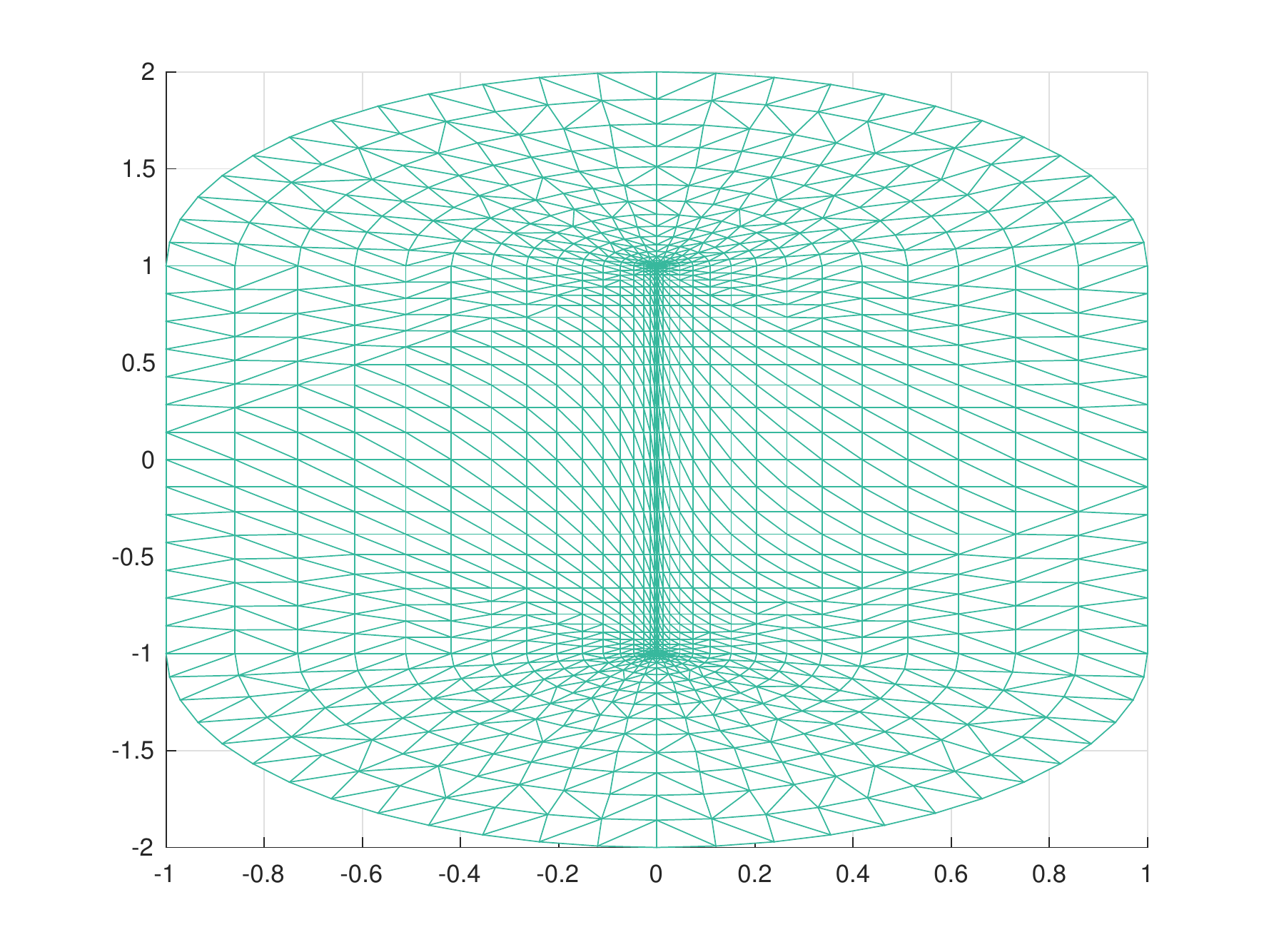}
 \caption{Isotropic and anisotropic graded meshes, with $h=2^{-3}$, $\mu=0.4$.}\label{F:segment2d}
 \end{center}
 \end{figure}

  In $\R^3$ meshes were built in the same way. We show approximation results only in $\R^3$. There, the fundamental solution for our problem (taking $L=1$ and $\hat{\gamma}=\frac{1}{2}$) is:

 $$\int_{-1}^1 \frac{1}{4\pi \sqrt{x^2+y^2+(z-t)^2}} dt = \frac{1}{4\pi} \log\bigg(\frac{\sqrt{x^2+y^2+(z-1)^2}+1-z}{\sqrt{x^2+y^2+(z+1)^2}-1-z}\bigg).$$

 The argument of the logarithm is equal to $3$ on the boundary of the solid ellipsoid:
$$\Omega: \frac{x^2}{3}+\frac{y^2}{3}+\frac{z^2}{4} \le 1,$$
so we have that:
$$G(x,y,z) = \frac{1}{4\pi} \log\bigg(\frac{\sqrt{x^2+y^2+(z-1)^2}+1-z}{3\big(\sqrt{x^2+y^2+(z+1)^2}-1-z\big)}\bigg).$$
is the exact solution of problem \eqref{problem} on $\Omega$.

{\small
\begin{longtable}{|c|c|c|c|c|c|c|c|c|c|}
  \hline
 \multirow{3}{*}{$h$}  &  \multicolumn{3}{c|}{$\mu=0.4$} & \multicolumn{3}{c|}{$\mu=0.5$} &
                          \multicolumn{3}{c|}{$\mu=1$} \\
 \cline{2-10}
     & \multirow{2}{*}{$N$} & $L^2$ & $L^2_{\beta}$
     & \multirow{2}{*}{$N$} & $L^2$ & $L^2_{\beta}$
     & \multirow{2}{*}{$N$} & $L^2$ & $L^2_{\beta}$ \\
     & & $\times 10^{-2}$ & $\times 10^{-2}$ & & $\times 10^{-2}$ & $\times 10^{-2}$ & &
     $\times 10^{-2}$ & $\times 10^{-2}$  \\
 \hline
 $0.4$ &  $1445$ & $1.04$ & $0.60$ & $1149$ & $1.39$ & $0.73$ & $693$ & $3.09$ & $1.92$ \\
 $0.2$ &  $9060$ & $0.34$ & $0.20$ & $6894$ & $0.48$ & $0.32$ & $3902$ & $2.06$ & $1.00$ \\
 $0.1$ & $58679$ & $0.10$ & $0.06$ & $49999$ & $0.13$ & $0.05$ & $33663$ & $1.01$ & $0.33$ \\
 \hline\hline
 e.o.c(N) & & $1.68$ & $1.72$  & & $1.71$ & $1.87$ & & $0.84$ & $1.27$ \\
 \hline
 e.o.c(h) & & $1.89$ & $1.92$  & & $1.89$ & $1.97$ & & $0.85$ & $1.77$ \\
 \hline
 \caption{Error in $L^2$ and in $L^2_\beta$ for $\beta = 0.4$, and estimated order of convergence for $-\Delta u = \delta_{\Gamma}$, in $n=3$, for different graduations, on meshes graded by construction.}
 \label{Tab:seg3daniso}
\end{longtable}
}

Table \ref{Tab:seg3daniso} shows results for anisotropic meshes whereas Table \ref{Tab:seg3diso} shows results for the same norm and grading parameters on isotropic meshes. We observe that the order of convergence is similar for both the unweighted and the weighted $L^2$ norm. We show to estimations of the orders of convergence: one computed in terms of the number of nodes ($e.o.c.(N)$), and one computed in terms of the mesh parameter $h$ ($e.o.c.(h)$).

{\small
\begin{longtable}{|c|c|c|c|c|c|c|c|c|c|}
  \hline
 \multirow{3}{*}{$h$}  &  \multicolumn{3}{c|}{$\mu=0.4$} & \multicolumn{3}{c|}{$\mu=0.5$} &
                          \multicolumn{3}{c|}{$\mu=1$} \\
 \cline{2-10}
     & \multirow{2}{*}{$N$} & $L^2$ & $L^2_{\beta}$
     & \multirow{2}{*}{$N$} & $L^2$ & $L^2_{\beta}$
     & \multirow{2}{*}{$N$} & $L^2$ & $L^2_{\beta}$ \\
     & & $\times 10^{-2}$ & $\times 10^{-2}$ & & $\times 10^{-2}$ & $\times 10^{-2}$ & &
     $\times 10^{-2}$ & $\times 10^{-2}$  \\
 \hline
 $0.4$ &  $1340$ & $1.16$ & $0.68$ & $1010$ & $1.46$ & $0.79$ & $566$ & $3.40$ & $2.11$ \\
 $0.2$ &  $11085$ & $0.36$ & $0.21$ & $7027$ & $0.49$ & $0.23$ & $3429$ & $2.30$ & $1.10$ \\
 $0.1$ & $87220$ & $0.10$ & $0.06$ & $56018$ & $0.14$ & $0.06$ & $2914$ & $1.06$ & $0.35$ \\
 \hline\hline
 e.o.c(N) & & $1.75$ & $1.77$  & & $1.70$ & $1.82$ & & $0.84$ & $1.70$ \\
 \hline
 e.o.c(h) & & $1.74$ & $1.77$  & & $1.89$ & $1.82$ & & $0.87$ & $1.76$ \\
 \hline
 \caption{Error in $L^2$ and in $L^2_\beta$ for $\beta = 0.4$, and estimated order of convergence for $-\Delta u = \delta_{\Gamma}$, in $n=3$, for different graduations, on meshes graded by construction.}
 \label{Tab:seg3diso}
\end{longtable}
}

Finally, Table \ref{Tab:isovsaniso3d} compares the number of points ($N$) and elements ($NT$) in isotropic and anisotropic meshes. It is important to notice that we are considering meshes on a domain much larger that $B(\Gamma,1)$. Consequently the number of nodes and tetrahedra in $\Omega\setminus B(\Gamma,1)$ is essentially the same in anisotropic than in anisotropic meshes. However, it is possible to observe, even for relatively large values of $h$ that the number of tetrahedra increases much more on isotropic meshes.

{\small
\begin{longtable}{|c|c|c|c|c|c|}
\hline
$\mu$ & h & $N_{iso}$ & $N_{aniso}$ & $NT_{iso}$ & $NT_{aniso}$ \\*
\hline
\multirow{3}{*}{$0.4$}  & $0.4$  &    $1340$ &    $1445$ &   $7474$  &  $8093$\\*
                        & $0.2$  &   $11085$ &   $9060$ &  $64958$  & $5293$\\*
                        & $0.1$  &  $87220$ &   $58679$ & $519688$  & $348466$\\
\hline
\multirow{3}{*}{$0.5$}  & $0.4$  &   $1010$ &    $1149$ &   $5503$  & $6364$\\*
                        & $0.2$  &   $7027$ &   $6894$ &  $40547$  & $39863$\\*
                        & $0.1$  &  $56018$ &   $49999$ &  $331453$  & $295455$\\
\hline
\caption{Number of nodes and number of tetrahedra in isotropic and anisotropic meshes for different values of $\mu$ and $h$.}
\label{Tab:isovsaniso3d}
\end{longtable}}

Figure \ref{F:sol_seg_3d} shows a solution in $\Omega$ for an anisotropic and an isotropic mesh.
\begin{figure}[!h]
\begin{center}
 \includegraphics[width=0.35\textwidth]{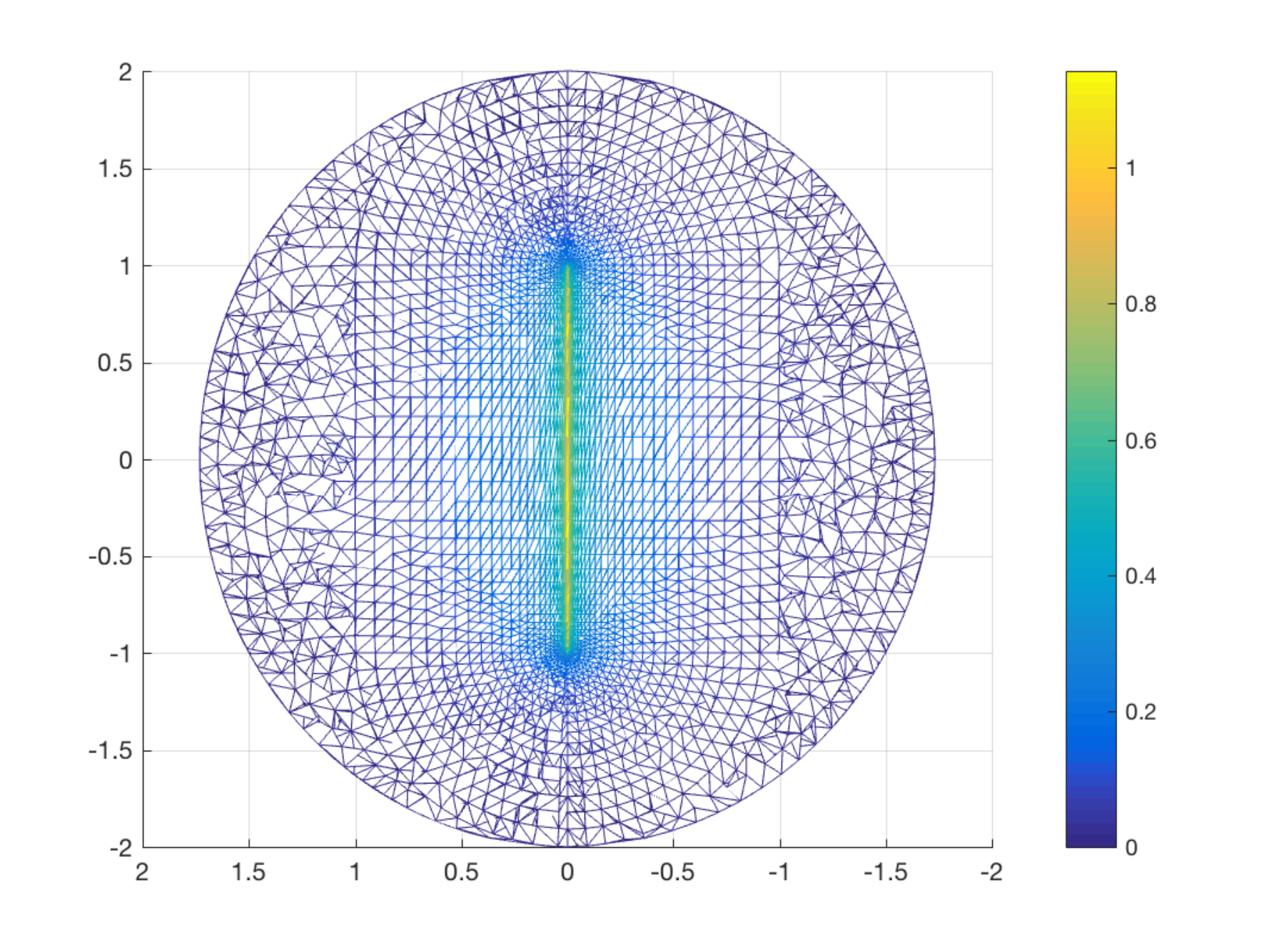}
 \includegraphics[width=0.35\textwidth]{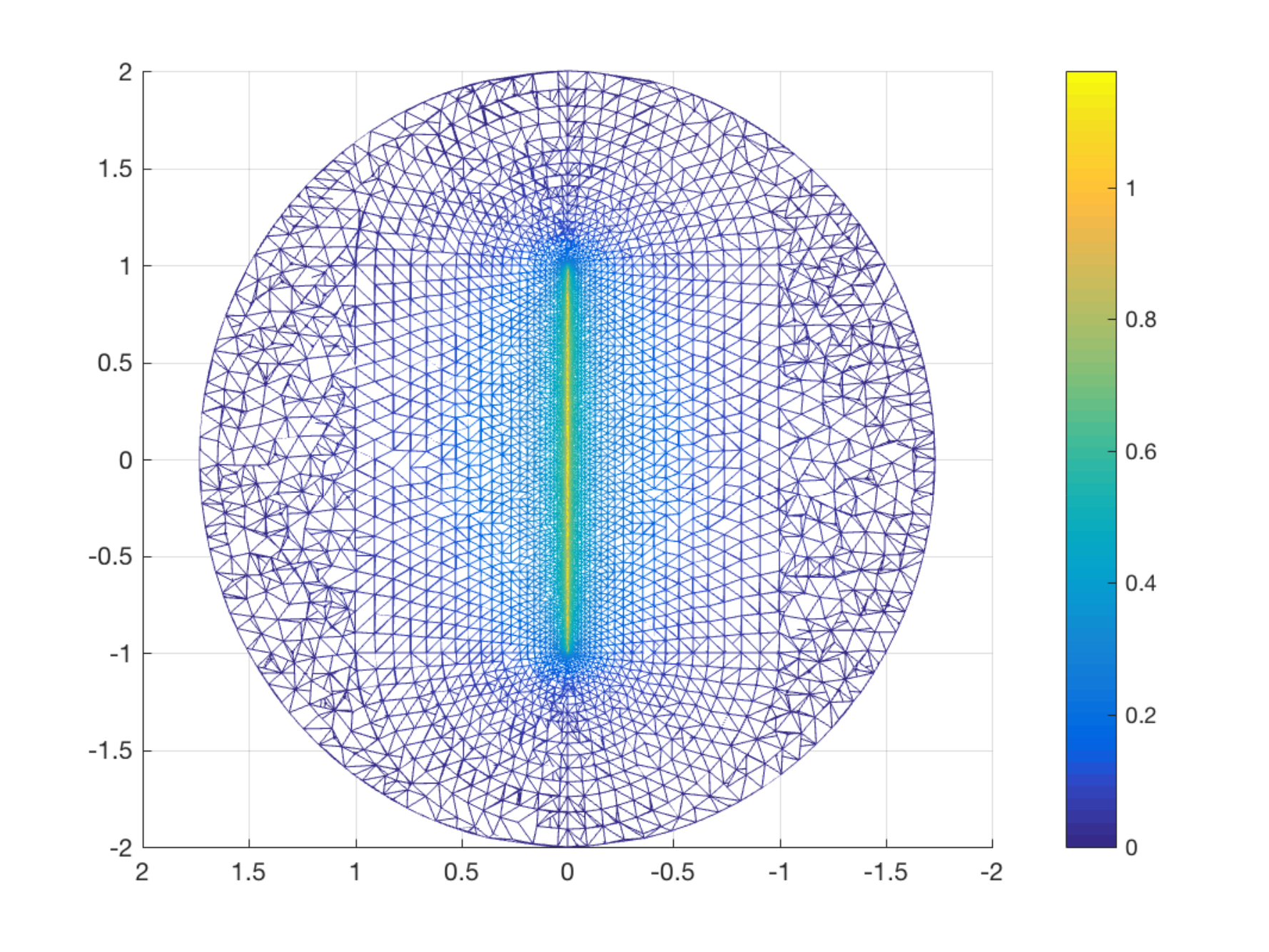}
 \caption{Solution on triangulation of $\Omega$ by an anisotropic mesh (left) and by an isotropic mesh (right), with $\mu = 0.4$ and $h=0.4$}\label{F:sol_seg_3d}
 \end{center}
 \end{figure}

\FloatBarrier

\section*{Acknowledgements} I want to thank Gabriel Acosta and Ricardo Dur\'an for their comments and suggestions.

\bibliographystyle{plain}
\bibliography{bibdoc}

\end{document}